\def\C{{\mathbb{C}}}
\def\R{{\mathbb{R}}}
\def\N{\mathbb N}
\newtheorem{theo}{Theorem}[section]
\newtheorem{lemma}[theo]{Lemma}
\newtheorem{prop}[theo]{Proposition}
\newtheorem{cor}[theo]{Corollary}
\newtheorem{hyp}[theo]{Hypothesis}
\theoremstyle{definition}
\newtheorem{exa}[theo]{Example}
\newtheorem{rem}[theo]{Remark}
\def\f{\bm{f}}
\def\uu{\bm{u}}
\def\calD{{\mathcal{D}}}
\def\H{\mathcal H}
\def\eps{\varepsilon}
\newcommand{\D}{\nabla}
\newcommand{\norm}[1]{\left\Vert#1\right\Vert}
\newcommand{\abs}[1]{\left\vert#1\right\vert}
\newcommand{\sign}[1]{\operatorname{sign}\left(#1\right)}
\DeclareMathOperator*{\esssup}{ess\,sup}
\let\div\undefined
\DeclareMathOperator{\div}{\mathrm{div}}
\begin{document}
\numberwithin{equation}{section}
\title[$L^p$-maximal regularity for vector-valued Schr\"{o}dinger operators]{$L^p$ Maximal regularity for vector-valued Schr\"{o}dinger operators}

\author[D. Addona, V. Leone, L. Lorenzi, A. Rhandi]{Davide Addona, Vincenzo Leone, Luca Lorenzi \& Abdelaziz Rhandi}

\address{D.Addona, L.Lorenzi: Plesso di Matematica, Dipartimento di Scienze Matematiche, Fisiche e Informatiche, Università di Parma, Parco Area delle Scienze 53/A, 43124 Parma, Italy}
\email{davide.addona@unipr.it, luca.lorenzi@unipr.it}

\address{V.Leone, A.Rhandi: Dipartimento di Matematica, Università degli Studi di Salerno, Via Giovanni Paolo II, 132, 84084 Fisciano (SA), Italy}
\email{vleone@unisa.it, arhandi@unisa.it}

\thanks{This article is based upon work from COST Action CA18232 MAT-DYN-NET, supported by COST (European Cooperation in Science and Technology) and the project PRIN2022 D53D23005580006 ``Elliptic and parabolic problems, heat kernel estimates and spectral theory". The authors are also members of G.N.A.M.P.A. of the Italian Istituto Nazionale di Alta Matematica (INdAM). The authors have been partially funded by the project INdAM - G.N.A.M.P.A. ``Operatori ellittici vettoriali a coefficienti illimitati in spazi $L^p$''.}
\vskip 0.5cm
\keywords{Vector-valued elliptic operators, Schr\"odinger operators with unbounded coefficients, vector-valued analytic semigroups, domain characterization, Lebesgue $L^p$-spaces, reverse H\"older class.}

\subjclass[2020]{Primary: 35K40; Secondary: 47D06, 35J47.}
	
\begin{abstract}
In this paper we consider the vector-valued Schr\"{o}dinger operator $-\Delta + V$, where the potential term $V$ is a matrix-valued function whose entries belong to $L^1_{\rm loc}(\R^d)$ and, for every $x\in\R^d$, $V(x)$ is a symmetric and nonnegative definite matrix, with non positive off-diagonal terms and with eigenvalues comparable each other. For this class of potential terms we obtain maximal inequality in $L^1(\R^d,\R^m).$ Assuming further that the minimal eigenvalue of $V$ belongs to some reverse H\"older class of order $q\in(1,\infty)\cup\{\infty\}$, we obtain maximal inequality in $L^p(\R^d,\R^m)$, for $p$ in between $1$ and some $q$. 
\end{abstract}
	
\maketitle

\section{Introduction}
The aim of this paper is to establish the vector-valued apriori maximal $L^p$-inequalities
\begin{eqnarray*}
\|\Delta \uu \|_{L^p(\R^d,\R^m)}+\|V \uu \|_{L^p(\R^d,\R^m)} \le \|\Delta \uu -V\uu \|_{L^p(\R^d,\R^m)},\qquad\;\, \uu \in C_c^\infty(\R^d,\R^m),
\end{eqnarray*}
for a class of symmetric nonnegative definite matrix-valued potentials $V$, with negative off-diagonal terms and with eigenvalues which are comparable
each other. This class includes matrix potentials with polynomially growing and singular entries, see Examples \ref{exemple1} and \ref{exemple2}.
As a consequence, one obtains that the $L^p$-realization of the operator $\Delta -V$, with domain
\begin{eqnarray*}
W^{2,p}(\R^d,\R^m)\cap \{\uu\in L^p(\R^d,\R^m): V\uu \in L^p(\R^d,\R^m)\},
\end{eqnarray*}
generates a $C_0$-semigroup of contractions on $L^p(\R^d,\R^m)$, which is also analytic if $p>1$.
This generalizes the results in \cite{auscher-benali:2007} and \cite{Shen} to the vector-valued case.

By using a perturbation method, due to Monniaux and Pr\"uss, see \cite{MP97}, these type of maximal inequalities have been obtained in the seminal paper \cite{HLPRS} and more recently in \cite{KMR18}, where a class of vector-valued Schr\"odinger type operators is considered, when the matrix potential $V$ is quasi
accretive and locally Lipschitz continuous on $\R^d$, and $D_jV(-V)^{-\alpha} \in L^\infty(\R^d,\R^{m\times m})$
for some $\alpha \in [0,1/2)$. This condition excludes the case of matrix-valued potentials with singular entries and also those growing more than quadratically. 

We quote the papers
\cite{AngLorMan,kunze-maichine-rhandi:2018,MR18}, where the 
operator $\div(Q\nabla) -V$ on $L^p(\R^d,\R^m)$ is considered 
when the matrix-valued diffusion function $Q$ is bounded and can degenerate neither at some $x \in \R^d$ nor at infinity, and generation of a semigroup in the $L^p$-spaces, with the description of the domain of its generator, is proved. More recently, 
in \cite{ALMR1}, more general diffusion and potential matrices are considered, under assumptions which still exclude potentials that have singularities at some $x\in \R^d$.

A class of vector-valued elliptic operators, including also first- and second-order coupling term, has been considered very recently in  \cite{AngLorMan-1,ALMR,AngLorPal}.

In the $L^p$-context, in the case of elliptic operators with unbounded diffusion and drift coefficients, it is known, at least in the scalar case, that the suitable space to study generation and regularity properties of semigroups generated by elliptic operators with unbounded coefficients, are $L^p$-spaces related to the invariant measure associated to such operators. In the vector-valued case, only partial results in this direction are available so far, see \cite{AAL_Inv,AAL_Inv1,AL,AngLor20}.

\subsection*{Organization of the paper} In Section \ref{sect-FFP}, we begin by proving a vector-valued version of a Fefferman-Phong type inequality. This allows us, in Section \ref{sect-defH}, to study the homogeneous version of $\mathcal{H}$, the $L^2$-version of the operator $-\Delta +V$. In Section \ref{sec:L1}, we prove the $L^1$-maximal inequalities. Here, the main ingredient is suitable approximations of the matrix-potential $V$ that preserve the positivity (componentwise) of the resolvent of $-\mathcal{H}$, see Proposition \ref{prop:conv_u_eps_M}. To prove the $L^p$-maximal inequalities we first prove, in Section \ref{sec:Lp}, a vector-valued version of \cite[Theorem 3.14]{aus-mar} that permits us, together with the $L^1$-maximal inequalities, to obtain the $L^p$ estimates. In Section \ref{sec:Gen} we adapt the celebrated perturbation theorem by T. Kato, see \cite[Theorem 3]{kato86}, to the vector-valued case and deduce that $L^p$-realization of the operator $\Delta -V$ with domain 
\begin{eqnarray*} 
W^{2,p}(\R^d,\R^m)\cap \{\uu\in L^p(\R^d,\R^m): V\uu \in L^p(\R^d,\R^m)\}
\end{eqnarray*}
generates a $C_0$-semigroup of contractions on $L^p(\R^d,\R^m)$ which is, for $p>1$, also analytic.
Section \ref{sec:Exa} is dedicated to two examples of applications.

\subsection*{Notation}
The absolute value of a vector $\xi\in\R^j$ $(j\in\N)$ is the vector $|\xi| = (|\xi_1|,\dots,|\xi_j|)\in\R^j.$
For $ d,\,m,\,k\in\N,$ $C^{k}(\R^d,\R^m)$ is the space of all vector-valued functions $\f:\R^d\to\R^m$, which are continuously differentiable up to the $k$-th order in $\R^d$. $C_c^{\infty}(\R^d,\R^m)$ is the space of the compactly supported and infinitely differentiable functions $\f:\R^d\to\R^m$ and, for $p\in[1,\infty)\cup\{\infty\},\ L^p(\R^d,\R^m) $ denotes the space of (the classes of) all measurable vector-valued functions, such that
$\norm{\f}_p= \left(\int_{\R^d}\norm{{\f}(x)}^pdx\right)^{\frac1p}$
 is finite, if $1 \leq p <\infty$, and such that
$\norm{\f}_{\infty}= \esssup_{x\in\R^d}\norm{{\f}(x)}<\infty$, if $p=\infty.$ In particular, we denote with $L^p_c(\R^d,\R^m)$ the subspace of $L^p(\R^d,\R^m)$ of compactly supported functions on $\R^d$ and with $L^p_{\rm loc}(\R^d,\R^m)$ the set of the measurable functions that belong $L^p(\Omega,\R^m)$ for every bounded measurable subset $\Omega$ of $\R^d$.

$W^{k,p}(\R^d,\R^m)$ denotes the Sobolev space of order $k$ in $L^p(\R^d,\R^m)$, that is the space of functions $\f\in L^p(\R^d,\R^m)$ such that the distributional derivative $\partial^{\beta}\f$ belongs to $L^p(\R^d,\R^m)$ for any multi-index $\beta$ with length at most  $k$. When $p=2$, we write $H^k(\R^d,\R^m)$ instead of $W^{k,2}(\R^d,\R^m)$. The space $W_{\rm loc}^{k,p}(\R^d,\R^m)$ consists of those measurable and locally integrable functions which, along with their distributional derivatives up to order $k$, belong to $L^p_{\rm loc}(\R^d,\R^m)$.

Let $ V=(v_{ij}),\ W=(w_{ij})$ be real $ m\times m $ matrices. We say that $ V\le W$ componentwise if $ v_{ij}\le w_{ij} $ for all  $ i,\,j=1,\dots,m,$ while $ V\le W$ in the sense of forms if $ \langle V\xi,\xi\rangle\le\langle W\xi,\xi\rangle $ for all $\xi\in\R^m.$
The average of a function $f$ over a subset $X$ of $\R^d$ will be denoted as $\operatorname{av}_{X}(f) = \frac1{|X|}\int_X f(y)\,dy$, where $|X|$ denotes the Lebesgue measure of $X$, which is assumed to be finite. 
With $\chi_E $ we denote the characteristic function of the set $E.$

A scalar function $w$ belongs to the reverse 
H\"{o}lder class $B_q,$ for $ q\in(1,\infty)\cup\{\infty\}$, if it is almost everywhere positive, $w\in L_{\rm loc}^q(\R^d)$ and there exists $C>0$ such that for all $Q$ cubes of $\R^d$ we have
\begin{align*}
\left(\frac1{|Q|}\int_Q w^q(x)\,dx\right)^{\frac1q}\le C\frac1{|Q|}\int_Qw(x)\,dx.
\end{align*}
For $q=\infty$, the left hand side in the last condition is replaced by $\esssup_{x\in Q}w(x)$.

If $K$ is a Hilbert space and $K'$ is its topological dual, then we denote by $\langle k',k\rangle_{K',K}$ the duality in $K$, for every $k\in K$ and $k'\in K'$.

\section{Main assumptions}
In the whole manuscript we assume the following hypothesis.
\begin{hyp}
\label{hyp-1}
Let $ V\colon \R^d\to\R^{m\times m}  $	be a matrix-valued operator such that $ v_{ij} = v_{ji}\in L^1_{\rm loc}(\R^d) $ for all $ i,j\ \in\{1,\dots,m\} $, the off-diagonal terms are non-positive, i.e., $ v_{ij}\le0 $ for all $ i\ne j\in\{1,\dots,m\}$, and for almost every $x\in\R^d$
\begin{align*}
\langle V(x)\xi, \xi \rangle \ge 0,\qquad\;\,\xi\in\R^m.
\end{align*}
\end{hyp}

Let $ \lambda_V(\cdot)\coloneqq\min_{\uu\in\R^m\colon\|\uu\|=1}\langle V(\cdot)\uu,\uu\rangle $ be the minimal eigenvalue of $V$. It is an almost everywhere in $\R^d$ non-negative and locally integrable scalar function. In addition we assume the following assumption.
\begin{hyp}
\label{hyp-2}
The eigenvalues of $V$ are comparable each other, i.e., if we denote with $\Lambda_V$ the maximal eigenvalue of $V$, then there exists a constant $C\geq 1$ such that for almost every $x\in\R^d$
\begin{equation*}
\lambda_V(x)\le\Lambda_V(x)\le C\,\lambda_V(x).
\end{equation*}
\end{hyp}

\section{A Fefferman-Phong type inequality}
\label{sect-FFP}
We start with an inequality which will be fundamental for the definition of our operator.
\begin{prop}
Let $V$ be a matrix-valued operator satisfying Hypothesis $\ref{hyp-1}$ and let $p\in [1,+\infty)$. Then, there exists a positive constant $ C = C(p,d,m) $ such that 
\begin{align}\label{equation2.1}
&\int_{Q}(\norm{\nabla \uu(x)}^p + \langle V(x)\uu(x),\uu(x)\rangle\norm{\uu(x)}^{p-2})\,dx
\notag\\
\ge &\frac{1}{2^{p-1}}\operatorname{av}_Q(\min\{CR^{-p},\lambda_V\})\int_{Q}\norm{\uu(x)}^pdx,
\end{align}
for every cube $Q\subset \R^d$ with side-length $R$ and every $ \uu\in W^{1,p}_{\rm loc}(\R^d,\R^m)$ such that
$\langle V\uu,\uu\rangle\|\uu\|^{p-2}$ belongs to $L^1_{\rm loc}(\R^d,\R^m)$.
\end{prop}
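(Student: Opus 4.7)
My plan is to reduce \eqref{equation2.1} to the classical scalar Fefferman--Phong inequality applied to the scalar nonnegative function $u:=\|\uu\|$ and the scalar weight $w:=\lambda_V$, via two elementary pointwise reductions that bridge the vector- and scalar-valued settings.

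First, by a standard Kato-type inequality, if $\uu\in W^{1,p}_{\rm loc}(\R^d,\R^m)$ then $u=\|\uu\|\in W^{1,p}_{\rm loc}(\R^d)$ and $\|\nabla u(x)\|\le\|\nabla\uu(x)\|$ a.e.: indeed on $\{\uu\neq 0\}$ one has $\nabla u=\|\uu\|^{-1}\sum_{i=1}^m u_i\nabla u_i$, and Cauchy--Schwarz gives $\|\nabla u\|^2\le\|\uu\|^{-2}\bigl(\sum_i u_i^2\bigr)\bigl(\sum_i\|\nabla u_i\|^2\bigr)=\|\nabla\uu\|^2$, both sides being zero a.e.\ on $\{\uu=0\}$. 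Integrating gives $\int_Q\|\nabla\uu\|^p\,dx\ge\int_Q\|\nabla u\|^p\,dx$.

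Second, by definition of $\lambda_V$ and Hypothesis \ref{hyp-1} one has $\langle V\uu,\uu\rangle\ge\lambda_V\|\uu\|^2$ a.e.; multiplying by $\|\uu\|^{p-2}$ (with the convention that the product vanishes where $\uu=0$, so that the whole expression remains locally integrable by hypothesis) yields the pointwise lower bound $\langle V\uu,\uu\rangle\|\uu\|^{p-2}\ge\lambda_V u^p$. Combining the two reductions, the left-hand side of \eqref{equation2.1} is bounded from below by $\int_Q(\|\nabla u\|^p+\lambda_V u^p)\,dx$, and the desired estimate reduces to the scalar Fefferman--Phong-type inequality
$$
\int_Q\bigl(\|\nabla u\|^p+w u^p\bigr)\,dx\ge\frac{1}{2^{p-1}}\operatorname{av}_Q\!\bigl(\min\{CR^{-p},w\}\bigr)\int_Q u^p\,dx
$$
applied with $w=\lambda_V$ (in the spirit of \cite{auscher-benali:2007, Shen}).

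The main technical step is therefore the scalar inequality itself. Since $\tilde w:=\min\{CR^{-p},w\}\le w$, it suffices to prove the estimate with $\tilde w$ in place of $w$, which has the advantage of being uniformly bounded by $CR^{-p}$. Denoting $u_Q=\operatorname{av}_Q(u)$, the convexity inequality $u^p\le 2^{p-1}(|u-u_Q|^p+u_Q^p)$ combined with Poincar\'e's inequality $\int_Q|u-u_Q|^p\,dx\le C_P R^p\int_Q\|\nabla u\|^p\,dx$ yields an upper estimate for $\operatorname{av}_Q(\tilde w)\int_Q u^p\,dx$, which is then matched against the lower bound on $\int_Q\tilde w u^p\,dx$ produced by the mirror convexity inequality $u_Q^p\le 2^{p-1}(|u-u_Q|^p+u^p)$. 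Absorbing the resulting error terms using the uniform bound $\tilde w\le CR^{-p}$ and choosing the constant $C=C(p,d,m)$ sufficiently small depending on $C_P$ closes the argument; the factor $1/2^{p-1}$ in the statement is precisely the one produced by the convexity bound.
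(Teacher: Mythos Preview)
Your reduction to the scalar case via $u=\|\uu\|$ and $w=\lambda_V$ is correct and is a genuinely different route from the paper's. The paper never passes to $\|\uu\|$: it proves a vector-valued double-integral Poincar\'e inequality
\[
\int_Q\|\nabla\uu\|^p\,dx\ \ge\ \frac{C(p,d,m)}{R^{d+p}}\int_{Q\times Q}\|\uu(x)-\uu(y)\|^p\,dx\,dy
\]
directly, pairs it with $\int_Q\lambda_V\|\uu\|^p\,dx=\frac{1}{R^d}\int_{Q\times Q}\lambda_V(x)\|\uu(x)\|^p\,dx\,dy$, and then applies the \emph{single} convexity bound $\|\uu(y)\|^p\le 2^{p-1}(\|\uu(x)-\uu(y)\|^p+\|\uu(x)\|^p)$ inside the double integral. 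Your scalar reduction is cleaner in that it immediately reuses the known scalar inequality from \cite{auscher-benali:2007,Shen}; the paper's version is self-contained and avoids having to check that $\|\uu\|\in W^{1,p}_{\rm loc}$.

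One point to watch in your sketch of the scalar step: routing through the mean $u_Q$ forces \emph{two} applications of the convexity inequality (once for $u^p\le 2^{p-1}(|u-u_Q|^p+u_Q^p)$ and once for $u_Q^p\le 2^{p-1}(|u-u_Q|^p+u^p)$), so the potential term $\int_Q\tilde w\,u^p$ picks up an extra factor $2^{p-1}$ that cannot be absorbed by shrinking $C$. You end up with $1/2^{2(p-1)}$ rather than the stated $1/2^{p-1}$. The clean fix is exactly the double-integral trick the paper uses: write $\operatorname{av}_Q(\tilde w)\int_Q u^p=\frac{1}{|Q|}\int_{Q\times Q}\tilde w(x)u(y)^p\,dx\,dy$ and apply convexity once with pivot $u(x)$ rather than $u_Q$. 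For the applications in the paper the precise constant in front is immaterial, so your argument still delivers what is needed.
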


\begin{proof}
We fix a cube $Q\subset\R^d$ with side-length $R$, $p\in [1,+\infty)$ and, to begin with, we prove that, for every $p\in [1,+\infty)$ and every $\uu\in W^{1,p}_{\rm loc}(\R^d,\R^m)$, there exists a positive constant $C(p,d,m)$ such that
\begin{align}
\int_{Q}\norm{\nabla\uu(x)}^pdx \ge  \frac{C(p,d,m)}{R^{d+p}}\int_{Q\times Q}\norm{\uu(x)-\uu(y)}^pdxdy.
\label{leao}
\end{align}
Clearly, it suffices to prove \eqref{leao} for functions $\uu\in C^1(\R^d,\R^m)$, since a straightforwards density argument allows us to extend its validity to any $\uu\in W^{1,p}_{\rm loc}(\R^d,\R^m)$.\\
So, let assume that $\uu\in C^1(\R^d,\R^m)$ and set $Q=\displaystyle\prod_{j=1}^d[a_j,a_j+R]$ for some $a_1,\ldots,a_d\in\R$.
For $x=(x_1,x_2,\ldots ,x_d),\,y=(y_1,y_2,\ldots ,y_d)\in Q$, define
\begin{eqnarray*}
x^{(0)}\coloneqq x,\quad x^{(k)}\coloneqq(y_1,\ldots ,y_k,x_{k+1},\ldots ,x_d) \hbox{\ for }1\le k\le d-1,\quad x^{(d)}\coloneqq y.
\end{eqnarray*}
Since $u_j\in C^1(\R^d)$ for every $j=1,\ldots,m$, it follows that
\begin{align*}
u_j(x)-u_j(x^{(1)})&=\int_{y_{1}}^{x_{1}}\partial_{1}u_j(t,x_2,\ldots ,x_d)\,dt,\\
u_j(x^{(k)})-u_j(x^{(k+1)})&=\int_{y_{k+1}}^{x_{k+1}}\partial_{k+1}u_j(y_1,\ldots ,y_k,t,x_{k+2},\ldots ,x_d)\,dt,\qquad\;\, 1\le k\le d-2,\\
u_j(x^{(d-1)})-u_j(y)&=\int_{y_d}^{x_d}\partial_{d}u_j(y_1,y_2,\ldots ,y_{d-1},t)\,dt.
\end{align*}
H\"older's inequality gives
\begin{align*}
&|u_j(x^{(k)})-u_j(x^{(k+1)})|^p\\
\le &|x_{k+1}-y_{k+1}|^{p-1}\int_{y_{k+1}}^{x_{k+1}} \left|\partial_{k+1}u_j(y_1,\ldots ,y_k,t,x_{k+2},\ldots ,x_d)\right|^pdt
\end{align*}
for $k=0,\ldots,d-2$ and
\begin{align*}
|u_j(x^{(d-1)})-u_j(x^{(d)})|^p\le |x_d-y_d|^{p-1}\int_{y_d}^{x_d} \left|\partial_du_j(y_1,\ldots ,y_k,t)\right|^pdt.
\end{align*}

Hence,
\begin{align*}
|u_j(x)-u_j(y)|^p\le &\left(\sum_{k=0}^{d-1}|u_j(x^{(k)})-u_j(x^{(k+1)})|\right)^p\\
\le & C(p,q)\sum_{k=0}^{d-1}|u_j(x^{(k)})-u_j(x^{(k+1)})|^p\\
\le & C(p,d)R^{p-1}\sum_{k=0}^{d-1}\bigg |\int_{y_{k+1}}^{x_{k+1}} \left|\partial_{k+1}u_j(y_1,\ldots ,y_k,t,x_{k+2},\ldots ,x_d)\right|^pdt\bigg |,
\end{align*}
where, with a slight abuse of notation, 
$(y_1,\ldots ,y_k,t,x_{k+2},\ldots ,x_d)=
(y_1,\ldots ,y_{d-1},t)$ when $k=d-1$.
Thus,
\begin{align*}
&\int_Q|u_j(x)-u_j(y)|^p\,dx\\
\le & C(p,d)R^{p-1}\int_Q\sum_{k=0}^{d-1}\bigg |\int_{y_{k+1}}^{x_{k+1}} \left|\partial_{k+1}u_j(y_1,\ldots ,y_k,t,x_{k+2},\ldots ,x_d)\right|^pdt\bigg |dx\\
\le &C(p,d)R^{p-1}\int_Qdx\sum_{k=0}^{d-1}\int_{a_{k+1}}^{a_{k+1}+R} \left\|\nabla u_j(y_1,\ldots ,y_k,t,x_{k+2},\ldots ,x_d)\right\|^p\,dt\\
\le & C(p,d)R^{p-1}\sum_{k=0}^{d-1}R^{k+1}\int_{Q_{k+2}}dx_{k+2}\cdots dx_d\\
&\qquad\qquad\qquad\qquad\qquad\qquad\;\times\int_{a_{k+1}}^{a_{k+1}+R} \left\|\nabla u_j(y_1,\ldots ,y_k,t,x_{k+2},\ldots ,x_d)\right\|^pdt\\
=&C(p,d)R^{p}\sum_{k=0}^{d-1}R^{k}\int_{Q_{k+1}}\left\|\nabla u_j(y_1,\ldots ,y_k,x_{k+1},x_{k+2},\ldots ,x_d)\right\|^pdx_{k+1}\cdots dx_d,
\end{align*}
where $Q_j=\prod_{i=j}[a_i,a_i+R]$ for every $j\le d$.
		
Now, integrating over $Q$ with respect to the variable $y$, we can write
\begin{align*}
&\int_{Q\times Q}|u_j(x)-u_j(y)|^p\,dxdy\\
\le &C(p,d)R^{p}\sum_{k=0}^{d-1}R^{k}\int_Qdy\int_{Q_{k+1}}\left\|\nabla u_j(y_1,\ldots ,y_k,x_{k+1},x_{k+2},\ldots ,x_d)\right\|^pdx_{k+1}\cdots dx_d\\
=& C(p,d)R^{p+d}\sum_{k=0}^{d-1}\int_Q\left\|\nabla u_j(y_1,\ldots ,y_k,x_{k+1},x_{k+2},\ldots ,x_d)\right\|^p dy_1\cdots dy_kdx_{k+1}\cdots dx_d\\
=&C(p,d)dR^{p+d}\int_Q\|\nabla u_j(x)\|^p\,dx.
\end{align*}
Therefore, the following estimate holds:
\begin{equation}\label{FF-argument-1}
\int_{Q}\norm{\D u_j(x)}^p\,dx \ge\frac{\tilde{C}(p,d)}{R^{d+p}}\int_{Q\times Q}\abs{u_j(x)-u_j(y)}^pdxdy.
\end{equation}
Since
\begin{align*}
\norm{\nabla\uu}^p\ge C(p,m)\sum_{j=1}^m\norm{\nabla u_j}^p,
\end{align*}
by applying \eqref{FF-argument-1} we obtain 
\begin{align*}
\int_{Q}\norm{\nabla\uu(x)}^pdx &\ge \frac{C(p,d,m)}{R^{d+p}}\sum_{j=1}^m\int_{Q\times Q}\abs{u_j(x)-u_j(y)}^pdxdy\\
&\ge  \frac{C(p,d,m)}{R^{d+p}}\int_{Q\times Q}\norm{\uu(x)-\uu(y)}^pdxdy
\end{align*}
and \eqref{leao} follows easily.

We now observe that if $\uu$ is such that
$\langle V\uu,\uu\rangle\|\uu\|^{p-2}\in L^1_{\rm loc}(\R^d)$, then
\begin{align*}
\int_{Q}\langle V(x)\uu(x),\uu(x) \rangle\|\uu(x)\|^{p-2}dx &\ge  \int_{Q}\lambda_V(x)\norm{\uu(x)}^pdx
=\frac1{R^d}\int_{Q\times Q}\lambda_V(x)\norm{\uu(x)}^pdxdy.
\end{align*}
Combining this inequality and \eqref{leao}, we get
\begin{align*}
&\int_{Q}(\norm{\nabla \uu(x)}^p + \langle V(x)\uu(x),\uu(x)\rangle\norm{\uu(x)}^{p-2})dx\\
\ge &\frac{C(p,d,m)}{R^{d+p}}\int_{Q\times Q}\norm{\uu(x)-\uu(y)}^pdxdy + \frac1{R^d}\int_{Q\times Q}\lambda_V(x)\norm{\uu(x)}^pdxdy \\
\ge &\frac1{R^d}\int_{Q\times Q}\min\{ C(p,d,m)R^{-p},\lambda_V(x)\}(\norm{\uu(x)-\uu(y)}^p+\norm{\uu(x)}^p)dxdy\\ 
\ge &\left(\frac1{R^d}\int_{Q}\min\left\lbrace C(p,d,m)R^{-p},\lambda_V(x)\right\rbrace dx \right)\left(\frac1{2^{p-1}}\int_{Q}\norm{\uu(y)}^p\,dy\right)\\[8pt] 
=&\frac{1}{2^{p-1}}\operatorname{av}_{Q}\left(\min\left\lbrace C(p,d,m)R^{-p},\lambda_V(\cdot)\right\rbrace\right) \left(\int_{Q}\norm{\uu(y)}^pdy\right),
\end{align*}
where we have used the inequality $ \frac1{2^{p-1}}|a|^p\le |a-b|^p + |b|^p$, which holds true for every $ a,b\in\R $ and $ 1\le p<\infty$.
\end{proof}

\section{Vector-valued Schr\"odinger operator}
\label{sect-defH}
Let us introduce the set 
\begin{align*}
\mathcal{V}=\{\f = (f_1,\dots,f_m)\in L^2(\R^d,\R^m): \nabla\f\in L^2(\R^d,\R^{d\times m}),\ V^{\frac12}\f \in L^2(\R^d,\R^m)\},
\end{align*}
and on $ \mathcal{V}\times\mathcal{V} $ let us define the sesquilinear form
\begin{align*} 
a(\f,\bm{g}) = \int_{\R^d}\bigg (\sum_{i=1}^{m}\langle \nabla f_i(x),\nabla g_i(x) \rangle+ \langle V(x)\f(x), {\bm g}(x) \rangle\,\bigg )\,dx
\end{align*}
for $\f, \bm{g}\in\mathcal{V}$.
The domain $ \mathcal{V} $ equipped with the norm
\begin{align*}
\norm\f_{\mathcal{V}} = \left(\norm\f^2_2 + a(\f,\f)\right)^{\frac12}
\end{align*}
is a Hilbert space and $C_c^\infty(\R^d,\R^m)$ is dense in $ \mathcal{V}$ (for more details we refer to \cite{dallara}).  Hence, since $a$ is accretive and continuous, there exists a unique nonnegative and self-adjoint operator $\H:D(\H)\to L^2(\R^d,\R^m)$,  defined by
\begin{align*}
D(\H) & =\{\uu\in \mathcal{V}\ {\rm such\  that}\ \exists \, \bm{v}\in L^2(\R^d,\R^m): a(\uu,{\bf \phi})=\langle \bm{v},{\bf \phi}\rangle_{L^2(\R^d,\R^m)}\;\forall {\bf \phi}\in \mathcal{V}\}, \\
\H\uu & =\bm{v}, \qquad \forall \uu\in D(\H).
\end{align*}

We notice that for any $\eps>0$ the operator $\H+\eps$ is invertible, but $\H$ itself in general is not invertible since the form $a$ might be not coercive. For this reason we introduce a version of $\H$ which is now invertible, in the sense of distributions, but it is defined in a larger space.

\subsection{Homogeneous version of \texorpdfstring{$\H$}{H}}
Let $ \dot{\mathcal{V}} $ be the closure of $ C_c^{\infty}(\R^d,\R^m)$ with respect to the norm
\begin{align*}
\norm\f_{\dot{\mathcal{V}}}= a(\f,\f)^{\frac12}=\left(\sum_{i=1}^m\int_{\R^d}(\norm{\nabla f_i(x)}^2 + \langle V(x)\f(x),\f(x)\rangle)dx\right)^{\frac12},\quad f\in \dot{\mathcal{V}}.
\end{align*}

Clearly, $\|\cdot\|_{\dot{\mathcal{V}}}$ is a seminorm.  To prove that, actually, it is a norm, it suffices to observe that for any $\f\in\dot{\mathcal{V}}$, estimate \eqref{equation2.1} with $p=2$ implies that $\f$ belongs to $L^2(Q,\R^m)$ and $\norm\f_{L^2(Q,\R^m)}\le c_Q\norm{\f}_{\dot{\mathcal{V}}}$ for every cube $Q\subset\R^d$, and this means that $ \dot{\mathcal{V}}\subset L^2_{\text{loc}}(\R^d,\R^m)$. This gives us that $\norm{\cdot}_{\dot{\mathcal{V}}}$ is a norm and $ a $ is the inner product associated to this norm. We can conclude that $(\dot{\mathcal{V}},
\norm{\cdot}_{\dot{\mathcal{V}}}) $ is a Hilbert space and $a$ is coercive in $\dot{\mathcal V}$.
   
If we choose not to identify $ \dot{\mathcal{V}} $ with its dual space, $  \dot{\mathcal{V}}'$, by Lax-Milgram's theorem, there exists a unique bounded and invertible operator $\dot{\H}\colon\dot{\mathcal{V}}\to\dot{\mathcal{V}}' $ such that $\langle \dot{\H}\,\uu,\bm{v} \rangle_{\dot{\mathcal{V}'},\dot{\mathcal{V}}} = a(\uu,\bm{v}),$ for every $\uu, \bm{v} \in \dot{\mathcal{V}}$.	
This means that for every $ \f\in\dot{\mathcal{V}}'$ there exists a unique $ \uu\in\dot{\mathcal{V}} $ such that $ a(\uu,\bm{v}) 
=\langle \f,\bm{v} \rangle_{\dot{\mathcal V}',\dot{\mathcal V}}$, 
for all $ \bm{v}\in C_c^\infty(\R^d,\R^m)$.  
Then, $ -\Delta \uu  + V\uu = \f,$ in the sense of distributions with $\uu = \dot{\H}^{-1}\f$.

\begin{rem}
\label{rem-EURO24}
Since $C^{\infty}_c(\R^d,\R^m)$ is dense both in $\mathcal{V}$ and in $\dot{\mathcal V}$, it follows that $\mathcal V\subset\dot{\mathcal V}$. Further, if the minimum eigenvalue of the matrix $V$ satisfies the condition $\lambda_V(x)\ge C$ for almost every $x\in\R^d$ and some positive constant $C$, then the spaces $\mathcal{V}$ and $\dot{\mathcal{V}}$ actually coincide.
Indeed, if $\f\in \dot{\mathcal V}$ then there exists a sequence $(\f_n)_{n\in\N}\subset C^{\infty}_c(\R^d,\R^m)$ such that $(\partial_j\f_n)_{n\in\N}$ and $(V^{1/2}\f_n)_{n\in\N}$ converge, respectively, to $\partial_j\f$ and $V^{1/2}\f$ in $L^2(\R^d,\R^m)$, as $n$ tends to $\infty$, for every $j=1,\ldots,d$. Since $\|V^{1/2}(\f_n-\f)\|^2\ge\lambda_V\|\f_n-\f\|^2\ge C\|\f_n-\f\|^2$ for every $n\in\N$, the sequence $(\f_n)_{n\in\N}$ converges to $\f$ in $L^2(\R^d,\R^m)$ as $n$ tends to $+\infty$, so that $\f\in\mathcal{V}$.

In view of this property, we will simply write $\mathcal{V}$ instead of $\dot{\mathcal V}$, when the function $x\mapsto\lambda_V(x)$ is bounded from below by
a positive constant.
\end{rem}

The following lemma provides us with an useful approximation result.

\begin{lemma}\label{lem:3.1}
Fix $\f\in L^2(\R^d,\R^m)\cap\dot{\mathcal{V}}'$ and, for every $\eps>0$, define $ \uu_{\eps} = (\H+\eps)^{-1}\f\in D(\H).$ Then, the family $(\uu_{\eps})_{\eps>0} $  converges to $\dot{\H}^{-1}\f$ in $\dot{\mathcal V}$ as $ \eps$ tends to $0$.
\end{lemma}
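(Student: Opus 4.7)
The plan is to test the equation $(\H+\eps)\uu_\eps=\f$ against $\uu_\eps$ itself, which gives both a uniform $\dot{\mathcal V}$-bound and an energy identity that will drive strong convergence. Since $\uu_\eps\in D(\H)\subset\mathcal V\subset\dot{\mathcal V}$ (the last inclusion being the density observation in Remark \ref{rem-EURO24}), one has
\begin{equation*}
a(\uu_\eps,\uu_\eps)+\eps\|\uu_\eps\|_{L^2}^2=\langle\f,\uu_\eps\rangle_{L^2}.
\end{equation*}
A first subtlety I would spell out is the identification $\langle\f,\uu_\eps\rangle_{L^2}=\langle\f,\uu_\eps\rangle_{\dot{\mathcal V}',\dot{\mathcal V}}$: it holds trivially when the second slot lies in $C_c^\infty(\R^d,\R^m)$, and extends to $\uu_\eps\in\dot{\mathcal V}$ by density of $C_c^\infty$ in $\dot{\mathcal V}$ and continuity. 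This gives
\begin{equation*}
\|\uu_\eps\|_{\dot{\mathcal V}}^2+\eps\|\uu_\eps\|_{L^2}^2=\langle\f,\uu_\eps\rangle_{\dot{\mathcal V}',\dot{\mathcal V}}\le\|\f\|_{\dot{\mathcal V}'}\|\uu_\eps\|_{\dot{\mathcal V}},
\end{equation*}
whence $\|\uu_\eps\|_{\dot{\mathcal V}}\le\|\f\|_{\dot{\mathcal V}'}$ and $\sqrt{\eps}\,\|\uu_\eps\|_{L^2}\le\|\f\|_{\dot{\mathcal V}'}$, both uniform in $\eps$.

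Next I would extract, by reflexivity of the Hilbert space $\dot{\mathcal V}$, a subsequence $\uu_{\eps_n}\rightharpoonup\uu^*$ in $\dot{\mathcal V}$, and identify $\uu^*$ with $\uu:=\dot{\H}^{-1}\f$. For any $\bm v\in C_c^\infty(\R^d,\R^m)\subset\mathcal V$ the identity $a(\uu_{\eps_n},\bm v)+\eps_n\langle\uu_{\eps_n},\bm v\rangle_{L^2}=\langle\f,\bm v\rangle_{L^2}$ holds; weak convergence gives $a(\uu_{\eps_n},\bm v)\to a(\uu^*,\bm v)$, while the remainder is controlled by
\begin{equation*}
|\eps_n\langle\uu_{\eps_n},\bm v\rangle_{L^2}|\le\sqrt{\eps_n}\,(\sqrt{\eps_n}\,\|\uu_{\eps_n}\|_{L^2})\,\|\bm v\|_{L^2}\le\sqrt{\eps_n}\,\|\f\|_{\dot{\mathcal V}'}\|\bm v\|_{L^2}\to 0.
\end{equation*}
Thus $a(\uu^*,\bm v)=\langle\f,\bm v\rangle_{\dot{\mathcal V}',\dot{\mathcal V}}$ for every $\bm v\in C_c^\infty$, and by density for every $\bm v\in\dot{\mathcal V}$. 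Uniqueness of the Lax--Milgram solution yields $\uu^*=\uu$, and since the limit is independent of the subsequence, the whole family satisfies $\uu_\eps\rightharpoonup\uu$ in $\dot{\mathcal V}$.

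To upgrade to strong convergence I would use the energy identity again: discarding the nonnegative term $\eps\|\uu_\eps\|_{L^2}^2$ and passing to the limit using weak convergence of $\uu_\eps$ to $\uu$ against the fixed functional $\f\in\dot{\mathcal V}'$,
\begin{equation*}
\limsup_{\eps\to 0}\|\uu_\eps\|_{\dot{\mathcal V}}^2\le\lim_{\eps\to 0}\langle\f,\uu_\eps\rangle_{\dot{\mathcal V}',\dot{\mathcal V}}=\langle\f,\uu\rangle_{\dot{\mathcal V}',\dot{\mathcal V}}=a(\uu,\uu)=\|\uu\|_{\dot{\mathcal V}}^2.
\end{equation*}
Weak lower semicontinuity of the $\dot{\mathcal V}$-norm gives the matching $\liminf$, so $\|\uu_\eps\|_{\dot{\mathcal V}}\to\|\uu\|_{\dot{\mathcal V}}$; combined with weak convergence in a Hilbert space this forces $\uu_\eps\to\uu$ strongly in $\dot{\mathcal V}$.

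The main point to get right is the first step, namely the clean identification of $\f\in L^2\cap\dot{\mathcal V}'$ as a functional acting on $\mathcal V$ through the $L^2$ inner product, so that the two sides of the resolvent equation may be tested against elements of $\mathcal V$ and $\dot{\mathcal V}$ interchangeably; everything else is then a standard variational convergence argument driven by the energy identity.
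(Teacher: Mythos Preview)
Your proposal is correct and follows essentially the same approach as the paper: test against $\uu_\eps$ to obtain the energy identity and the uniform $\dot{\mathcal V}$-bound, extract a weakly convergent subsequence, identify the limit via the variational equation, and upgrade to strong convergence through convergence of norms. Your handling of the remainder term via the bound $\sqrt{\eps}\,\|\uu_\eps\|_{L^2}\le\|\f\|_{\dot{\mathcal V}'}$ and your direct ``every subsequence has the same limit'' argument for the full family are slightly cleaner than the paper's presentation, which leaves the vanishing of $\eps\langle\uu_{\eps_n},\bm v\rangle_{L^2}$ implicit and uses a contradiction argument at the end.
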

	
\begin{proof}
Let $ \f$ and $\uu_{\eps}$ be as in the statement. By definition we get
\begin{equation}\label{eq:def:ueps:form}
\int_{\R^d}\left(\sum_{i=1}^m\langle \nabla (u_{\eps})_i,\nabla v_i\rangle + \langle(V+\eps)\uu_{\eps},\bm{v}\rangle\right)dx = \int_{\R^d}\langle \f,\bm{v}\rangle\, dx,\qquad\;\,\bm{v}\in\mathcal{V}.
\end{equation} 
Now we can choose $ \bm{v} = \uu_{\eps}\in D(\H)\subseteq\mathcal{V}$ and obtain that
\begin{align*}
\int_{\R^d}\left(\sum_{i=1}^m \norm{\nabla (u_{\eps})_i}^2 + \langle(V+\eps)\uu_{\eps},\uu_{\eps}\rangle\right)dx = \int_{\R^d}\langle
\f,\uu_{\eps}\rangle\,dx
\end{align*}
for every $\varepsilon>0$. Then,
\begin{align}
\norm{\uu_{\eps}}^2_{\dot{\mathcal{V}}} &\le \int_{\R^d}\bigg (\sum_{i=1}^m \norm{\nabla (u_{\eps})_i}^2 + \langle(V+\eps)\uu_{\eps},\uu_{\eps}\rangle\bigg )dx= \int_{\R^d}\langle \f,\uu_{\eps}\rangle\,dx \notag \\
&\le \left|\int_{\R^d}\langle \f,\uu_{\eps}\rangle\,dx\right|\le\norm{\f}_{\dot{\mathcal{V}}'}\norm{\uu_{\eps}}_{\dot{\mathcal{V}}}
\label{lim_u_eps}
\end{align}
for every $\varepsilon>0$, and we can conclude that $\norm{\uu_{\eps}}_{\dot{\mathcal{V}}}\le\norm{\f}_{\dot{\mathcal{V}}'}$, i.e., $ (\uu_{\eps})_{\eps>0} $ is bounded in $ \dot{\mathcal{V}}$. Hence, there exists a sequence $(\varepsilon_n)_{n\in\N}$, converging to zero, such that 
$ (\uu_{\eps_n})_{n\in\N}$ weakly converges to some function $\uu\in\dot{\mathcal{V}}$ as $\varepsilon$ tends to $0$.
Writing \eqref{eq:def:ueps:form} with $\uu_{\varepsilon_n}$ instead of $\uu_{\varepsilon}$ and taking the limit as $n$ tends to $\infty$, we obtain
\begin{align*}
\int_{\R^d}\left(\sum_{i=1}^m\langle \nabla u_i,\nabla v_i\rangle + \langle V\uu,\bm{v}\rangle\right)dx = \int_{\R^d}\langle \f,\bm{v}\rangle\,dx,\qquad\;\, \bm{v}\in {\mathcal{V}}.
\end{align*}
Since $C_c^\infty(\R^d,\R^m)$ is dense both in $\mathcal V$ and in $\dot{\mathcal V}$, by the definition of the operator $\dot{\H}$ we conclude that $\uu=\dot{\H}^{-1}\f$.

To prove that $(\uu_{\varepsilon_n})_{n\in \N}$ strongly converges to $\uu$ it is enough to prove the convergence of the norms. For this purpose, we observe that
\begin{align*}
\norm{\uu}^2_{\dot{\mathcal{V}}} &= \int_{\R^d}\bigg (\sum_{i=1}^m \norm{\nabla u_i}^2 + \langle V\uu,\uu\rangle\bigg )dx \\
&\le \liminf_{n\to+\infty} \int_{\R^d}\bigg (\sum_{i=1}^m \norm{\nabla (u_{\eps_n})_i}^2 + \langle V\uu_{\eps_n},\uu_{\eps_n}\rangle\bigg )dx \\
&\le \limsup_{n\to +\infty} \int_{\R^d}\bigg (\sum_{i=1}^m \norm{\nabla (u_{\eps_n})_i}^2 + \langle V\uu_{\eps_n},\uu_{\eps_n}\rangle\bigg )dx \\
&\le \limsup_{n\to+\infty} \int_{\R^d}\bigg (\sum_{i=1}^m \norm{\nabla (u_{\eps_n})_i}^2 + \langle (V+\eps_n)\uu_{\eps_n},\uu_{\eps_n}\rangle\bigg )dx \\
&=\limsup_{n\to +\infty}\int_{\R^d}\langle \f,\uu_{\eps_n}\rangle\,dx\\
&= \int_{\R^d}\langle \f,\uu\rangle\,dx \\
&=  \int_{\R^d}\bigg (\sum_{i=1}^m\langle \nabla u_i,\nabla u_i\rangle + \langle V\uu,\uu\rangle\bigg )dx =  \norm{\uu}^2_{\dot{\mathcal{V}}},
\end{align*}	
so that $(\norm{\uu_{\eps_n}}_{\dot{\mathcal{V}}})_{n\in\N}$ converges to $\norm{\uu}_{\dot{\mathcal{V}}}$ as $\varepsilon$ tends to $0$.  

To conclude the proof, we observe that the previous results show that every convergent subsequence of $(\uu_{\varepsilon})_{\eps>0}$ converges to $\uu$. Hence, all the family $(\uu_{\varepsilon})_{\eps>0}$ converges to $\uu$ in $\dot{\mathcal V}$ as $\varepsilon$ tends to $0$. Indeed, assume by contradiction that there exists a sequence $(\uu_{\varepsilon_n})_{n\in\N}$, with $(\varepsilon_n)_{n\in\N}$ decreasing to $0$, which does not converge to $\uu$. It follows that there exist $\delta>0$ and a subsequence $(\uu_{\varepsilon_{n_k}})_{k\in\N}\subseteq (\uu_{\varepsilon_n})_{n\in\N}$ such that 
\begin{align}
\label{conv_uu_eps_n_k}
\|\uu_{\varepsilon_{n_k}}-\uu\|_{\dot{\mathcal V}}\geq \delta, \qquad k\in\N.
\end{align}
But formula \eqref{lim_u_eps}, with $\uu_\varepsilon$ replaced by $\uu_{\varepsilon_{n_k}}$, implies that $(\uu_{\varepsilon_{n_k}})_{k\in\N}$ is bounded in $\dot{\mathcal V}$. Hence, it admits a subsequence which weakly converges to some $\bm{v}$ in $\dot{\mathcal V}$ and, arguing as above, we infer that such subsequence strongly converges to $\dot{\mathcal H}^{-1}\f$. This contradicts \eqref{conv_uu_eps_n_k}, and so we obtain that the whole family $(\uu_{\varepsilon})_{\eps>0}$ converges to $\uu$ in $\dot{\mathcal V}$ as $\varepsilon$ tends to $0$.
\end{proof}

 \begin{rem}\label{rem:3.2}
Since $ \dot{\mathcal{V}}\subset L^2_{\text{loc}}(\R^d,\R^m) $, taking 
formula \eqref{equation2.1}, with $p=2$, into account, it follows easily that the embedding $ L^2_c(\R^d,\R^m)\subset\dot{\mathcal{V}}'$ is continuous.
Moreover,  $(\uu_{\eps})_{\eps>0} $ converges to $\dot{\mathcal H}^{-1}\f$ in $ L^2(\Omega,\R^m)$ for every bounded measurable set $\Omega\subset\R^d$. It thus follows that $(\uu_{\eps})_{\eps>0}$ has a subsequence converging to $\dot{\mathcal H}^{-1}\f$ almost everywhere in $\R^d$.
\end{rem}
	
\section{\texorpdfstring{$L^1$}{L1}-maximal inequalities}
\label{sec:L1}
We start this section with a vectorial version of Kato's inequality. This result has been obtained in \cite[Proposition 2.3]{kunze-maichine-rhandi:2018} assuming that $\uu\in H^1_{\rm loc}(\R^d,\R^m)$ and $\Delta\uu\in L^1_{\rm loc}(\R^d,\R^m)$. Adapting the technique in
the proof of \cite[Lemma A]{kato72}, which deals with the scalar case, the condition $\uu\in H^1_{\rm loc}(\R^d,\R^m)$ can be weakened, assuming that $\uu\in L^1_{\rm loc}(\R^d,\R^m)$. We provide the details for reader's convenience.

\begin{lemma}
\label{lem:kato_vett}
If $\bm{u}$ and $\Delta\bm{u}$ belong to $L^1_{\rm loc}(\R^d,\R^m)$, then 
$\Delta \|\bm{u}\|\geq\chi_{\{\bm{u}\neq \bm{0}\}}\|\bm{u}\|^{-1}\langle \bm{u},\Delta\bm{u}\rangle$ in the sense of distributions.     
\end{lemma}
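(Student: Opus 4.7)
My plan is to follow Kato's smoothing strategy from \cite{kato72}, adapted to the vector-valued setting by two regularizations: a standard mollification to reach smoothness, and the substitution $v_{\delta}:=\sqrt{\|\uu\|^2+\delta^2}$ to smooth the norm near $\{\uu=\bm{0}\}$. The core pointwise computation in the smooth case is that if $\uu\in C^\infty(\R^d,\R^m)$, then direct differentiation gives
\begin{align*}
\Delta v_\delta=\frac{\langle\uu,\Delta\uu\rangle+\sum_{i=1}^m\|\nabla u_i\|^2}{v_\delta}-\frac{\big\|\sum_{i=1}^m u_i\nabla u_i\big\|^2}{v_\delta^{3}},
\end{align*}
and, by Cauchy--Schwarz applied coordinatewise to each partial derivative, $\big\|\sum_i u_i\nabla u_i\big\|^2\le\|\uu\|^2\sum_i\|\nabla u_i\|^2\le v_\delta^2\sum_i\|\nabla u_i\|^2$, so the mixed term is absorbed and one obtains the pointwise inequality $\Delta v_\delta\ge v_\delta^{-1}\langle\uu,\Delta\uu\rangle$.

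For general $\uu,\Delta\uu\in L^1_{\rm loc}(\R^d,\R^m)$, I would then set $\uu_n:=\uu*\rho_n$ with $\rho_n$ a standard mollifier; then $\uu_n\in C^\infty(\R^d,\R^m)$ and $\Delta\uu_n=(\Delta\uu)*\rho_n$, with $\uu_n\to\uu$ and $\Delta\uu_n\to\Delta\uu$ both in $L^1_{\rm loc}$ and, along a subsequence, pointwise almost everywhere. Testing the smooth-case inequality written for $\uu_n$ against a nonnegative $\varphi\in C_c^\infty(\R^d)$, and keeping $\delta>0$ fixed, I can let $n\to\infty$: the factor $\uu_n/\sqrt{\|\uu_n\|^2+\delta^2}$ is uniformly bounded by $1$ and converges almost everywhere, while $\Delta\uu_n\to\Delta\uu$ in $L^1_{\rm loc}$; dominated convergence on both sides yields
\begin{align*}
\int_{\R^d}\sqrt{\|\uu\|^2+\delta^2}\,\Delta\varphi\,dx\ \ge\ \int_{\R^d}\frac{\langle\uu,\Delta\uu\rangle}{\sqrt{\|\uu\|^2+\delta^2}}\,\varphi\,dx.
\end{align*}

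Finally I let $\delta\to0$. On the left, $v_\delta\to\|\uu\|$ pointwise and $|v_\delta-\|\uu\||\le\delta$, giving $L^1_{\rm loc}$-convergence and hence the distributional convergence $\Delta v_\delta\to\Delta\|\uu\|$. On the right, the integrand converges pointwise to $\chi_{\{\uu\neq\bm{0}\}}\|\uu\|^{-1}\langle\uu,\Delta\uu\rangle$ (on $\{\uu=\bm{0}\}$ the numerator vanishes while the denominator is $\delta$), and it is dominated uniformly in $\delta$ by $|\varphi|\,\|\Delta\uu\|\in L^1(\R^d)$, thanks to the bound $\|\uu\|/v_\delta\le1$. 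Dominated convergence closes the argument. The main obstacles I anticipate are the Cauchy--Schwarz estimate that controls the mixed gradient term in Step 1, and the verification of the uniform, $\delta$-independent $L^1_{\rm loc}$ majorant $\|\Delta\uu\|$ needed for the final passage to the limit; once these are in place, the distributional inequality follows as stated.
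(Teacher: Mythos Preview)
Your proposal is correct and follows essentially the same approach as the paper: regularize the norm via $v_\delta=\sqrt{\|\uu\|^2+\delta^2}$, establish the pointwise inequality $\Delta v_\delta\ge v_\delta^{-1}\langle\uu,\Delta\uu\rangle$ for smooth $\uu$ using Cauchy--Schwarz on the gradient term, extend to $\uu,\Delta\uu\in L^1_{\rm loc}$ by mollification (splitting the right-hand side exactly as the paper does into $(\uu_n/v_{\delta,n})\cdot(\Delta\uu_n-\Delta\uu)$ plus $(\uu_n/v_{\delta,n}-\uu/v_\delta)\cdot\Delta\uu$ and using boundedness and dominated convergence), and finally let $\delta\to0$. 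The only cosmetic difference is that the paper keeps the argument at the level of $L^1_{\rm loc}$ convergence of both sides rather than testing against a fixed $\varphi$, but the content is identical.
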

\begin{proof}
Let $\uu\in C^2(\R^d,\R^m)$ and set $u_{\varepsilon}\coloneqq(\|\uu\|^2+\varepsilon^2)^{1/2}$ for every $\varepsilon>0$. Explicit computations give
\begin{equation}
\label{u_eps_kato}
2u_\varepsilon\partial _k(u_\varepsilon)
=  \partial_k(u_\varepsilon)^2
= 2\sum_{i=1}^mu_i\partial_ku_i, \qquad k=1,\ldots,d.
\end{equation}
Hence,
\begin{align*}
(u_\varepsilon)^2\sum_{k=1}^d(\partial_ku_\varepsilon)^2
= \sum_{k=1}^d\langle \uu,\partial_k \uu\rangle^2,
\end{align*}
which implies that 
\begin{align}
\label{u_eps_kato_2}
\sum_{k=1}^d(\partial_ku_\varepsilon)^2\leq \frac{\|\uu\|^2} {(u_\varepsilon)^2}\sum_{k=1}^d\|\partial_k\uu\|^2\leq \sum_{k=1}^d\|\partial_k\uu\|^2.     
\end{align}
Differentiating \eqref{u_eps_kato} with respect to $k\in\{1,\ldots,d\}$ and summing up $k$ from $1$ to $d$ we get
\begin{align*}
u_\varepsilon \Delta u_\varepsilon+\sum_{k=1}^d(\partial_ku_\varepsilon)^2=\langle \uu,\Delta\uu\rangle+\sum_{k=1}^d\|\partial_k\uu\|^2,  
\end{align*}
and from \eqref{u_eps_kato_2} we infer that $\Delta u_\varepsilon\geq (u_\varepsilon)^{-1}\langle \uu,\Delta\uu\rangle$. Now we extend this inequality to every $\uu\in L^1_{\rm loc}(\R^d,\R^m)$ such that $\Delta\uu\in L^1_{\rm loc}(\R^d,\R^m)$. We consider a family of mollifiers $(\rho_\theta)_{\theta>0}$ and we set $\uu^\theta=((u^\theta)_1,\ldots,(u^\theta)_m)$ with  $(u^\theta)_i=u_i*\rho_\theta$ for every $i=1,\ldots,m$ and $\theta>0$, and 
\begin{align*}
u_\varepsilon\coloneqq(\|\uu\|^2+\varepsilon^2)^{1/2}, \qquad u^\theta_\varepsilon\coloneqq(\|\uu^\theta\|^2+\varepsilon^2)^{1/2}    
\end{align*}
for every $\varepsilon,\theta>0$. We get
\begin{align*}
|u^\theta_\varepsilon-u_\varepsilon|\leq |\|\uu^\theta\|-\|\uu\||
\leq \|\uu^\theta-\uu\|, \qquad \varepsilon,\theta>0.
\end{align*}
Since $(\uu^\theta)_{\theta>0}$ converges to $\uu$ in $L^1_{\rm loc}(\R^d,\R^m)$, it follows that $(u^\theta_\varepsilon)_{\theta>0}$ converges to $u_\varepsilon$ in $L^1_{\rm loc}(\R^d)$. Hence, $(\Delta (u^\theta_\varepsilon))_{\theta>0}$ converges to $\Delta u_\varepsilon$ in $\mathcal D'$. Further,
\begin{align*}
(u^\theta_\varepsilon)^{-1}\langle \uu^\theta,\Delta\uu^\theta\rangle -(u_\varepsilon)^{-1}\langle \uu,\Delta\uu\rangle
\!= &  (u^\theta_\varepsilon)^{-1}\langle \uu^\theta,\Delta\uu^\theta-\Delta\uu\rangle  + \langle (u^\theta_\varepsilon)^{-1}\uu^\theta-(u_\varepsilon)^{-1}\uu,\Delta\uu\rangle
\end{align*}
for every $\varepsilon,\theta>0$. Since $\|(u^\theta_\varepsilon)^{-1}\uu^\theta\|\leq 1$, $\|(u_\varepsilon)^{-1}\uu\|\leq 1$ and $(u^\theta_\varepsilon)^{-1}\uu^\theta-(u_\varepsilon)^{-1}\uu$ pointwise vanishes as $\theta$ goes to $0$, by dominated convergence we get 
\begin{align*}
(u^\theta_\varepsilon)^{-1}\langle \uu^\theta,\Delta\uu^\theta-\Delta\uu\rangle  + \langle (u^\theta_\varepsilon)^{-1}\uu^\theta-(u_\varepsilon)^{-1}\uu,\Delta\uu\rangle\to 0    
\end{align*}
in $L^1_{\rm loc}(\R^d)$ as $\theta$ goes to $0$. Hence, we get $\Delta u_\varepsilon\geq (u_\varepsilon)^{-1}\langle \uu,\Delta\uu\rangle$ for every $\uu\in L^1_{\rm loc}(\R^d,\R^m)$ with $\Delta\uu\in L^1_{\rm loc}(\R^d,\R^m)$.

To conclude, we take the limit as $\varepsilon$ goes to $0$ in $\Delta u_\varepsilon\geq (u_\varepsilon)^{-1}\langle \uu,\Delta\uu\rangle$. We notice that $(u_\varepsilon)_{\varepsilon>0}$ monotonically converges to $\|\uu\|$, that $\|(u_\varepsilon)^{-1}\uu\|\leq 1$ and that $((u_\varepsilon)^{-1}\uu)_{\varepsilon>0}$ pointwise tends to $\chi_{\{\uu\neq\bm{0}\}}\|\uu\|^{-1}\uu$ as $\varepsilon$ vanishes. Letting $\varepsilon$ go to $0$, it follows that $\Delta\|\uu\|\geq \chi_{\{\uu\neq\bm{0}\}}\|\uu\|^{-1}\langle \uu,\Delta\uu\rangle$ in the sense of distributions.
\end{proof}

In the proof of Theorem \ref{prop:stime_L1} we will need the following auxiliary results.

\begin{lemma}\label{lem:rel_VepsM_Veps}
For every $\varepsilon>0$ and every $M\in\N$, let $V_{\varepsilon,M}$ be the matrix whose entries are $(V_{\varepsilon,M})_{ii}=v_{ii}+\varepsilon$ and $(V_{\varepsilon,M})_{ij}=v_{ij}\vee(-M)$, for every $i,j=1,\ldots,m$ with $i\neq j$. Moreover, set $V_{\varepsilon}=V+\varepsilon$.
Then,
\begin{align}
\langle V_{\varepsilon,M}\xi,\xi\rangle
\geq \langle V_{\varepsilon,M}|\xi|,|\xi|\rangle\geq \langle V_{\varepsilon}|\xi|,|\xi|\rangle
\label{stima_V}
\end{align} 
for all $ \xi\in\R^m$. In particular, if $\lambda_{V_{\varepsilon,M}}$ denotes the minimum eigenvalue of $V_{\varepsilon,M}$ and $\lambda_{V_{\varepsilon}}$ denotes the minimum eigenvalue of $V_\varepsilon$, then it follows that $\lambda_{V_{\varepsilon,M}}\geq \lambda_{V_{\varepsilon}}\ge\varepsilon$ for every $\varepsilon>0$ and every $M\in\N$.
\end{lemma}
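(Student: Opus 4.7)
The proof will be a direct computation separating diagonal and off-diagonal contributions to the quadratic forms, exploiting two sign facts built into the setup: by Hypothesis \ref{hyp-1} the off-diagonal entries $v_{ij}$ ($i\neq j$) are non-positive, and the truncation $v_{ij}\vee(-M)$ is still non-positive (it keeps $v_{ij}$ if $v_{ij}\ge -M$ and replaces it by $-M$ otherwise). In particular, the diagonal entries of $V_{\varepsilon,M}$ and of $V_\varepsilon$ coincide (both equal $v_{ii}+\varepsilon$), while the off-diagonal entries satisfy $-M\le v_{ij}\vee(-M)\le 0$ and $v_{ij}\vee(-M)\ge v_{ij}$.

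To establish the first inequality in \eqref{stima_V}, I would expand
\begin{align*}
\langle V_{\varepsilon,M}\xi,\xi\rangle-\langle V_{\varepsilon,M}|\xi|,|\xi|\rangle
= \sum_{i\neq j}(V_{\varepsilon,M})_{ij}\bigl(\xi_i\xi_j-|\xi_i||\xi_j|\bigr),
\end{align*}
where the diagonal terms have cancelled since $\xi_i^2=|\xi_i|^2$. Each factor $\xi_i\xi_j-|\xi_i||\xi_j|$ is non-positive, and $(V_{\varepsilon,M})_{ij}\le 0$ for $i\neq j$, so the product of the two non-positive quantities is non-negative and the first inequality follows term by term.

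For the second inequality I would again isolate the off-diagonal contribution, since the diagonals of $V_{\varepsilon,M}$ and $V_\varepsilon$ agree:
\begin{align*}
\langle V_{\varepsilon,M}|\xi|,|\xi|\rangle-\langle V_{\varepsilon}|\xi|,|\xi|\rangle
= \sum_{i\neq j}\bigl(v_{ij}\vee(-M)-v_{ij}\bigr)|\xi_i||\xi_j|\ge 0,
\end{align*}
because each bracket is non-negative by the definition of the maximum and the factor $|\xi_i||\xi_j|$ is non-negative.

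Finally, the eigenvalue statement will follow by combining these inequalities with Hypothesis \ref{hyp-1}: the assumption $\langle V(x)\xi,\xi\rangle\ge 0$ gives $\langle V_\varepsilon\xi,\xi\rangle\ge\varepsilon\|\xi\|^2$, hence $\lambda_{V_\varepsilon}\ge\varepsilon$; and for any unit vector $\xi$, the chain \eqref{stima_V} yields
$\langle V_{\varepsilon,M}\xi,\xi\rangle\ge\langle V_\varepsilon|\xi|,|\xi|\rangle\ge\lambda_{V_\varepsilon}\||\xi|\|^2=\lambda_{V_\varepsilon}$,
so taking the infimum over $\|\xi\|=1$ gives $\lambda_{V_{\varepsilon,M}}\ge\lambda_{V_\varepsilon}$. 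No genuine obstacle is anticipated; the only point to be careful about is the sign flip induced by multiplying $\xi_i\xi_j-|\xi_i||\xi_j|\le 0$ by the non-positive off-diagonal coefficients.
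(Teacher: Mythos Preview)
Your proof is correct and follows essentially the same approach as the paper: both arguments separate diagonal from off-diagonal contributions and exploit the non-positivity of the truncated off-diagonal entries together with $v_{ij}\vee(-M)\ge v_{ij}$. Your treatment of the eigenvalue conclusion is in fact slightly more explicit than the paper's, which simply states that it ``follows immediately'' from \eqref{stima_V}.
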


\begin{proof}
We fix $\xi\in\R^m$ and observe that the first inequality follows from the fact that the off-diagonal entries of the matrix $V_{\varepsilon,M}$ are nonpositive. Indeed,
\begin{align*}
\langle V_{\varepsilon,M}\xi,\xi\rangle 
&=\sum_{i=1}^m(V_{\varepsilon,M})_{ii}\xi^2_i + \sum_{i,j=1,i\ne j}^m(V_{\varepsilon,M})_{ij}\xi_i\xi_j\\
&\ge\sum_{i=1}^m(V_{\varepsilon,M})_{ii}|\xi_i|^2 + \sum_{i,j=1,i\ne j}^m(V_{\varepsilon,M})_{ij}|\xi_i||\xi_j|\\
&=\langle V_{\varepsilon,M}|\xi|,|\xi|\rangle.
\end{align*}
To prove the second inequality, we notice that the matrices $ V_{\eps,M} $ and $ V_{\eps} $ have the same elements on the main diagonal, whereas $ (V_{\eps})_{ij}=v_{ij} $ for $ i\ne j$. Therefore, 
\begin{align*}
\langle V_{\varepsilon,M}|\xi|,|\xi|\rangle  
&=\sum_{i=1}^m(V_{\varepsilon})_{ii}|\xi_i|^2 + \sum_{i,j=1,i\ne j}^m(v_{ij}\vee(-M))|\xi_i||\xi_j|\\
&\ge\sum_{i=1}^m(V_{\varepsilon})_{ii}|\xi_i|^2 + \sum_{i,j=1,i\ne j}^mv_{ij}|\xi_i||\xi_j|\\
&=\langle V_{\varepsilon}|\xi|,|\xi|\rangle.
\end{align*}
The proof of \eqref{stima_V} is complete. 

The last statement now follows immediately from \eqref{stima_V}.
\end{proof}

\begin{lemma}
\label{lemma-V-eps-M-N}
For every $\varepsilon>0$, every $M,N\in\N$ and every $x\in\R^d$, let $V_{\varepsilon,M,N}(x)$ be the matrix whose entries are $(V_{\varepsilon,M,N}(x))_{ii}=(v_{ii}(x)+\varepsilon)\wedge N$ and $(V_{\varepsilon,M,N}(x))_{ij}=v_{ij}(x)\vee(-M)$, for every $i,j=1,\ldots,m$ with $i\neq j$. 
Then, for every fixed $M\in\N$ and $\varepsilon>0$, there exists an integer $N(\varepsilon,M,m)$ such that $V_{\varepsilon,M,N}(x)\ge \frac{\varepsilon}{2}$ in the sense of forms for every $x\in\R^d$.
\end{lemma}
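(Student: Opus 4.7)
My plan is to combine a Gershgorin-type lower bound for $V_{\varepsilon,M,N}(x)$ with Lemma \ref{lem:rel_VepsM_Veps}, via a case split on the size of $\lambda_V(x)$. As in the proof of that lemma, the non-positivity of the off-diagonal entries $(V_{\varepsilon,M,N})_{ij}=v_{ij}\vee(-M)\in[-M,0]$ implies $\langle V_{\varepsilon,M,N}(x)\xi,\xi\rangle\ge\langle V_{\varepsilon,M,N}(x)|\xi|,|\xi|\rangle$ for every $\xi\in\R^m$, so I may assume from the outset that $\xi_i\ge 0$ for each $i$. Using $v_{ij}\vee(-M)\ge-M$ together with $\sum_{i\ne j}\xi_i\xi_j\le(m-1)\|\xi\|^2$, I obtain the rough bound
\begin{align*}
\langle V_{\varepsilon,M,N}(x)\xi,\xi\rangle\ge\sum_{i=1}^m\left[(v_{ii}(x)+\varepsilon)\wedge N-(m-1)M\right]\xi_i^2.
\end{align*}

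I then separate two cases. In \textbf{Case A}, $\lambda_V(x)\ge(m-1)M-\varepsilon/2$: since the diagonal entries of the positive semidefinite matrix $V(x)$ satisfy $v_{ii}(x)\ge\lambda_V(x)$, I get $(v_{ii}(x)+\varepsilon)\wedge N\ge(m-1)M+\varepsilon/2$ as soon as $N\ge(m-1)M+\varepsilon/2$, and the displayed inequality yields $\langle V_{\varepsilon,M,N}(x)\xi,\xi\rangle\ge(\varepsilon/2)\|\xi\|^2$. In \textbf{Case B}, $\lambda_V(x)<(m-1)M-\varepsilon/2$: Hypothesis \ref{hyp-2} forces $v_{ii}(x)\le\Lambda_V(x)\le C\lambda_V(x)<C(m-1)M-C\varepsilon/2$ for each $i$, so choosing $N\ge C(m-1)M+\varepsilon$ guarantees $v_{ii}(x)+\varepsilon<N$; no diagonal truncation is triggered and hence $V_{\varepsilon,M,N}(x)=V_{\varepsilon,M}(x)\ge\varepsilon I$ by Lemma \ref{lem:rel_VepsM_Veps}.

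Both cases are simultaneously accommodated by any integer $N\ge C(m-1)M+\varepsilon$ (note $C\ge 1$, so this also satisfies the Case A threshold $N\ge(m-1)M+\varepsilon/2$). The main conceptual point is that the crude Gershgorin bound by itself cannot close the argument when $\lambda_V(x)$ is small relative to $(m-1)M$; but in precisely that regime Hypothesis \ref{hyp-2} keeps the whole matrix $V(x)$ uniformly small, which makes the truncation at level $N$ inactive and reduces matters back to the inequality $V_{\varepsilon,M}\ge\varepsilon I$ already established in Lemma \ref{lem:rel_VepsM_Veps}.
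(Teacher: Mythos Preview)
Your proof is correct, but it takes a genuinely different route from the paper's. The paper does not use Hypothesis~\ref{hyp-2} at all in this lemma: instead of a global case split on $\lambda_V(x)$, it partitions the \emph{indices} $\{1,\ldots,m\}$ pointwise into $I_1(x)=\{i:v_{ii}(x)+\varepsilon\le N\}$ and $I_2(x)=\{i:v_{ii}(x)+\varepsilon>N\}$, estimates the three resulting blocks of the quadratic form separately (using the semidefiniteness of $V$ on $I_1$, a Cauchy--Schwarz/Gershgorin bound on $I_2$, and Young's inequality on the cross terms), and arrives at $N(\varepsilon,M,m)\approx mM+2m^2M^2/\varepsilon$. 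Your argument is more economical---the Case~B reduction to Lemma~\ref{lem:rel_VepsM_Veps} is slick, and your threshold $N\approx C(m-1)M$ is only linear in $M$---but the price is the reliance on Hypothesis~\ref{hyp-2} to force \emph{all} diagonal entries to be simultaneously small in Case~B. Since Hypothesis~\ref{hyp-2} is a standing assumption in the paper this is perfectly legitimate here; the paper's proof, however, shows that the lemma holds under Hypothesis~\ref{hyp-1} alone, which is a slightly stronger statement.
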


\begin{proof}
We fix $x\in \R^d$ and set
\begin{align*}
I_1(x)= &  \{i\in\{1,\ldots,m\}:v_{ii}(x)+\varepsilon\leq N\}, \\ 
I_2(x)=&\{1,\ldots,m\}\setminus I_1(x).    
\end{align*}

Recalling that $v_{ij}\le 0$ for all $i\neq j$, we can estimate
\begin{align}
\sum_{i,j\in I_1}(V_{\eps,M,N}(x))_{ij}\xi_i\xi_j
= & \sum_{i\in I_1}(v_{ii}(x)+\varepsilon)\xi_i^2
+ \sum_{i,j\in I_1,j\neq i}[v_{ij}(x)\vee (-M)]\xi_i\xi_j \notag \\
\geq & \sum_{i\in I_1}v_{ii}(x)\xi_i^2+\sum_{i,j\in I_1,j\neq i}v_{ij}(x)|\xi_i||\xi_j| +\varepsilon\sum_{i\in I_1}\xi_i^2 \notag \\
\geq  & \varepsilon\sum_{i\in I_1}\xi_i^2,
\label{stima_appr_somma_1}
\end{align}
since 
\begin{align*}
\sum_{i\in I_1}v_{ii}(x)\xi_i^2+\sum_{i,j\in I_1,j\neq i}v_{ij}(x)|\xi_i||\xi_j|
= \langle V(x)\eta,\eta\rangle\geq0,
\end{align*}
where the entries of the vector $\eta\in \R^m$ are $\eta_i=|\xi_i|$ if $i\in I_1$ and $\eta_i=0$ otherwise.
		
Next, we note that 
\begin{align}
\sum_{i,j\in I_2}(V_{\eps,M,N}(x))_{ij}\xi_i\xi_j
= & N\sum_{i\in I_2}\xi_i^2
+ \sum_{i,j\in I_2,i\neq j}[v_{ij}(x)\vee(-M)]\xi_i\xi_j \notag \\
\geq & N\sum_{i\in I_2}\xi_i^2
- M\sum_{i,j\in I_2,i\neq j}|\xi_i||\xi_j| \notag \\
\ge & N\sum_{i\in I_2}\xi_i^2
- M\bigg (\sum_{i\in I_2}|\xi_i|\bigg )^2\notag\\
\geq & (N-mM)\sum_{i\in I_2}\xi_i^2,
\label{stima_appr_somma_2}
\end{align}
where we have applied the Cauchy-Schwartz inequality in the last step. 

Finally, applying the Young inequality we can infer that
\begin{align}
\sum_{i\in I_1,j\in I_2}(V_{\eps,M,N}(x))_{ij}\xi_i\xi_j
= & \sum_{i\in I_1,j\in I_2}[v_{ij}(x)\vee(-M)]\xi_i\xi_j
\geq -M\sum_{i\in I_1,j\in I_2}|\xi_i||\xi_j| \notag \\
\geq & -M \sum_{i\in I_1,j\in I_2}\left(\frac{\varepsilon}{4mM}\xi_i^2+\frac{mM}{\varepsilon}\xi_j^2\right) \notag \\
\geq & -\frac{\varepsilon}{4}\sum_{i\in I_1}\xi_i^2-\frac{m^2M^2}{\varepsilon}\sum_{j\in I_2}\xi_j^2.
\label{stima_appr_somma_3}
\end{align}

From \eqref{stima_appr_somma_1}, \eqref{stima_appr_somma_2} and \eqref{stima_appr_somma_3} it follows that
\begin{align*}
\langle V_{\varepsilon,M,N}(x)\xi,\xi\rangle
\geq & \frac{\varepsilon}{2}\sum_{i\in I_1}\xi_i^2+\left(N-mM-\frac{2m^2M^2}{\varepsilon}\right)\sum_{j\in I_2}\xi_j^2\\
\ge & \min\left\{\frac{\varepsilon}{2},N-mM-\frac{2m^2M^2}{\varepsilon}\right\}|\xi|^2
\end{align*}
and the coefficient in the last of the previous chain of inequalities equals $\frac{\varepsilon}{2}$, provided that $N\geq \displaystyle N(\varepsilon,M,m)\coloneqq\left[\displaystyle mM+\frac{2m^2M^2}{\varepsilon}+\frac{\varepsilon}{2}\right]+2$. The assertion is proved.
\end{proof}

For every $\eps>0$ and $M\in\N$, we now introduce the operator ${\mathcal H}_{\varepsilon,M}$, associated to the form 
\begin{eqnarray*}
a_{\varepsilon,M}(\f,\bm{g}) = \int_{\R^d}\bigg (\sum_{i=1}^{m}\langle \nabla f_i(x),\nabla g_i(x) \rangle+ \langle V_{\varepsilon,M}(x)\f(x), {\bm g}(x) \rangle\bigg )dx
\end{eqnarray*}
for every $\f,\bm{g}\in {\mathcal V}_{\varepsilon,M}$, where $ {\mathcal V}_{\varepsilon,M}$ is the closure of $C^{\infty}_c(\R^d,\R^m)$ with respect to the norm induced by the form $a_{\varepsilon,M}$. $ {\mathcal V}_{\varepsilon,M}$ coincides with the set of all functions $\f\in H^1(\R^d,\R^m)$ such that $V^{1/2}_{\varepsilon,M}\f\in L^2(\R^d,\R^m)$ by Lemma \ref{lem:rel_VepsM_Veps} and Remark \ref{rem-EURO24}. Similarly, the space $ {\mathcal V}_{\varepsilon}$ is the closure of $C^{\infty}_c(\R^d,\R^m)$ with respect to the norm induced by the form $a_{\varepsilon}$, defined as $\sqrt{a_{\varepsilon}(\f,\f)} = (a(\f,\f) + \eps\|\f\|_2^2)^{1/2}$ for every $\f\in{\mathcal V}_{\varepsilon}$.

\begin{prop}
\label{prop:conv_u_eps_M}
For every $\f\in L^2_c(\R^d,\R^m)$ and $\eps>0$,
the function ${\mathcal H}_{\varepsilon,M}^{-1}\f$ converges in
$\mathcal{V}_{\varepsilon}$ to the function $({\mathcal H}+\varepsilon)^{-1}\f$ as $M$ tends to infinity.
In particular, if $\bm{f}$ is componentwise nonnegative, then ${\mathcal H}_{\varepsilon,M}^{-1}\bm{f}$ and 
$(\mathcal H+\varepsilon)^{-1}\bm{f}$ are componentwise nonnegative and ${\mathcal H}_{\varepsilon,M_1}^{-1}\bm{f}\leq {\mathcal H}_{\varepsilon,M_2}^{-1}\bm{f}$ componentwise for every $M_1\leq M_2$.
\end{prop}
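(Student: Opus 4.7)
The plan is to proceed in three steps: (i) establish positivity preservation of $\mathcal H_{\varepsilon,M}^{-1}$ and $(\mathcal H+\varepsilon)^{-1}$ on componentwise nonnegative data, (ii) deduce monotonicity of $M\mapsto\mathcal H_{\varepsilon,M}^{-1}\f$ together with the a priori upper bound $\mathcal H_{\varepsilon,M}^{-1}\f\le(\mathcal H+\varepsilon)^{-1}\f$ for $\f\ge\bm{0}$, and (iii) identify the monotone limit as $(\mathcal H+\varepsilon)^{-1}\f$ and upgrade the convergence to $\mathcal V_\varepsilon$; the case of a general $\f\in L_c^2(\R^d,\R^m)$ then follows by the componentwise decomposition $\f=\f^+-\f^-$ and linearity of $\mathcal H_{\varepsilon,M}^{-1}$ and $(\mathcal H+\varepsilon)^{-1}$. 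For (i), fix $\f\ge\bm{0}$, set $\uu=\mathcal H_{\varepsilon,M}^{-1}\f$, and test the weak formulation with $-\uu^-$, which lies in $\mathcal V_{\varepsilon,M}$ because $|\uu|\in\mathcal V_{\varepsilon,M}$ thanks to $\langle V_{\varepsilon,M}|\uu|,|\uu|\rangle\le\langle V_{\varepsilon,M}\uu,\uu\rangle$ from Lemma \ref{lem:rel_VepsM_Veps}. The gradient term yields $\int_{\R^d}\sum_i|\nabla u_i^-|^2\,dx$ since $\nabla u_i^+\cdot\nabla u_i^-=0$ a.e., and the decomposition $\langle V_{\varepsilon,M}\uu,-\uu^-\rangle=-\langle V_{\varepsilon,M}\uu^+,\uu^-\rangle+\langle V_{\varepsilon,M}\uu^-,\uu^-\rangle$ produces two nonnegative contributions (the first because diagonal cross-products vanish and off-diagonal entries of $V_{\varepsilon,M}$ are nonpositive while $\uu^\pm\ge\bm{0}$, the second by $V_{\varepsilon,M}\ge\varepsilon I$ from Lemma \ref{lem:rel_VepsM_Veps}); since the right-hand side $\int\langle\f,-\uu^-\rangle\,dx\le 0$, this forces $\uu^-\equiv\bm{0}$. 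The same argument applied to $(\mathcal H+\varepsilon)\uu=\f$, using $V\ge 0$ and the nonpositive off-diagonals of $V$, proves $(\mathcal H+\varepsilon)^{-1}\f\ge\bm{0}$.

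For (ii), writing $\uu_M=\mathcal H_{\varepsilon,M}^{-1}\f$ and $\uu=(\mathcal H+\varepsilon)^{-1}\f$, subtracting the equations for $M_1\le M_2$ gives
\begin{equation*}
\mathcal H_{\varepsilon,M_2}(\uu_{M_2}-\uu_{M_1})=(V_{\varepsilon,M_1}-V_{\varepsilon,M_2})\uu_{M_1}.
\end{equation*}
The matrix on the right has zero diagonal and nonnegative off-diagonal entries $(v_{ij}\vee(-M_1))-(v_{ij}\vee(-M_2))\ge 0$, so combined with $\uu_{M_1}\ge\bm{0}$ from (i) the right-hand side is componentwise nonnegative; applying (i) to $\mathcal H_{\varepsilon,M_2}$ gives $\uu_{M_1}\le\uu_{M_2}$. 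The analogous identity $(\mathcal H+\varepsilon)(\uu-\uu_M)=(V_{\varepsilon,M}-V_\varepsilon)\uu_M\ge\bm{0}$ yields $\uu_M\le\uu$ for every $M$.

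Step (iii) proceeds as follows. By (ii), $\uu_M\uparrow\uu^*$ pointwise with $\uu^*\le\uu\in L^2$, hence $\uu_M\to\uu^*$ in $L^2$ by dominated convergence. Testing $\mathcal H_{\varepsilon,M}\uu_M=\f$ against $\uu_M$ and using $\uu_M\ge\bm{0}$ together with $\langle V_{\varepsilon,M}\uu_M,\uu_M\rangle\ge\langle V_\varepsilon\uu_M,\uu_M\rangle$ (Lemma \ref{lem:rel_VepsM_Veps}), the sequence $(\uu_M)_M$ is bounded in $\mathcal V_\varepsilon$, so $\uu_M\rightharpoonup\uu^*$ in $\mathcal V_\varepsilon$. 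Writing the equation as
\begin{equation*}
a_\varepsilon(\uu_M,\bm{v})=\int_{\R^d}\langle\f,\bm{v}\rangle\,dx-\int_{\R^d}\langle(V_{\varepsilon,M}-V_\varepsilon)\uu_M,\bm{v}\rangle\,dx,\qquad\bm{v}\in C_c^\infty(\R^d,\R^m),
\end{equation*}
the left-hand side converges to $a_\varepsilon(\uu^*,\bm{v})$ by weak convergence in $\mathcal V_\varepsilon$, while the error term is controlled by $\sum_{i\ne j}\int|v_{ij}|\chi_{\{|v_{ij}|>M\}}u^*_j|v_i|\,dx$ and vanishes by dominated convergence, the dominating function $|v_{ij}|u^*_j|v_i|$ being integrable via $|v_{ij}|\le\sqrt{v_{ii}v_{jj}}$, Hypothesis \ref{hyp-2} (to dominate $v_{jj}(u^*_j)^2$ by $C\lambda_V|\uu^*|^2\le C\langle V\uu^*,\uu^*\rangle\in L^1$), and the compactness of $\operatorname{supp}\bm{v}$. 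Density of $C_c^\infty$ in $\mathcal V_\varepsilon$ then identifies $\uu^*=\uu$, and strong convergence in $\mathcal V_\varepsilon$ follows from the norm convergence $\|\uu_M\|_{\mathcal V_\varepsilon}^2\le a_{\varepsilon,M}(\uu_M,\uu_M)=\int\langle\f,\uu_M\rangle\,dx\to\|\uu\|_{\mathcal V_\varepsilon}^2$ combined with weak lower semicontinuity. The main technical obstacle is precisely this dominated-convergence argument: it crucially uses Hypothesis \ref{hyp-2} to bound $v_{jj}(u^*_j)^2$ by the available quadratic-form energy, without which the truncation of off-diagonal entries would not pass cleanly to the limit.
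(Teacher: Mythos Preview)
Your argument is essentially correct and takes a route that differs from the paper's in two notable places, but there is one genuine gap you should patch.

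\medskip
\textbf{The gap.} In step (ii) you write $(\mathcal H+\varepsilon)(\uu-\uu_M)=(V_{\varepsilon,M}-V_\varepsilon)\uu_M$ and invoke the positivity of $(\mathcal H+\varepsilon)^{-1}$ from step (i). But step (i) was proved for $L^2$ data, and the right-hand side here need not lie in $L^2(\R^d,\R^m)$: the off-diagonal coefficients $(v_{ij}\vee(-M))-v_{ij}$ are only in $L^1_{\rm loc}$. So the operator-inverse argument does not apply as written. The fix is to run the variational comparison directly, exactly as you did in (i): set $\bm w=\uu_M-\uu$, observe that $0\le w_i^+\le (u_M)_i$ (you already know $\uu,\uu_M\ge\bm 0$), so $\bm w^+\in\mathcal V_{\varepsilon,M}$; then $a_\varepsilon(\bm w,\bm w^+)=-\int\langle(V_{\varepsilon,M}-V_\varepsilon)\uu_M,\bm w^+\rangle\le 0$, and the sign analysis of the left-hand side forces $\bm w^+\equiv\bm 0$. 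This is precisely the device the paper uses for the monotonicity in $M$.

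\medskip
\textbf{Comparison with the paper.} For positivity of $\mathcal H_{\varepsilon,M}^{-1}$ the paper splits $\mathcal H_{\varepsilon,M}$ into a diagonal part and the bounded off-diagonal potential $\widetilde V_M$, then uses the Trotter product formula and positivity of $e^{-t\widetilde V_M}$; your direct test with $-\uu^-$ is more elementary and equally valid. For the limit, the paper does not prove the a~priori bound $\uu_M\le\uu$; instead it extracts a weak limit of $V_{\varepsilon,M}^{1/2}\uu_M$ in $L^2$ and identifies it with $V_\varepsilon^{1/2}\tilde\uu_\varepsilon$ via the auxiliary estimate $\|V_{\varepsilon,M}^{-1/2}V_\varepsilon^{1/2}\bm v\|^2\le\varepsilon^{-1}\|V_\varepsilon^{1/2}\bm v\|^2$. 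Your approach---use $\uu_M\le\uu$ to get $L^2$-convergence and then pass to the limit in $a_\varepsilon(\uu_M,\bm v)=\int\langle\f,\bm v\rangle-\int\langle(V_{\varepsilon,M}-V_\varepsilon)\uu_M,\bm v\rangle$---is cleaner, but you pay for it by invoking Hypothesis~\ref{hyp-2} to dominate $v_{jj}(u^*_j)^2$ by $C\langle V\uu^*,\uu^*\rangle$. The paper's square-root route avoids Hypothesis~\ref{hyp-2} at this particular step, though both hypotheses are standing assumptions anyway.
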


\begin{proof}
Let us fix $\f\in L^2_c(\R^d,\R^m)$, componentwise nonnegative, $\eps>0$, and set ${\bm{u}}_{\varepsilon,M}={\mathcal H}_{\varepsilon,M}^{-1}\f$. Then, $\uu_{\varepsilon,M}\in {\mathcal V}_{\varepsilon,M}$ and 
\begin{align}
\int_{\R^d}\bigg (\sum_{i=1}^m\langle\nabla (u_{\varepsilon,M})_i,\nabla v_i\rangle+\langle V_{\varepsilon,M}\uu_{\varepsilon,M},\bm{v}\rangle\bigg )dx=\int_{\R^d}\langle\f,\bm{v}\rangle dx    
\label{1}
\end{align}
for every $\bm{v}\in  {\mathcal V}_{\varepsilon,M}$.

Observe that $\uu_{\varepsilon,M}$ is componentwise nonnegative. Indeed, ${\mathcal H}_{\varepsilon,M}$ can be split into the sum $${\mathcal H}_{\varepsilon,M}=\widetilde{\mathcal{H}}_{\varepsilon}
+\widetilde{V}_M,$$ where $(\widetilde{V}_M)_{ii}=0,\,(\widetilde{V}_M)_{ij}=v_{ij}\vee (-M)$ for $i\neq j$, and $\widetilde{\mathcal{H}}_{\varepsilon}$ is the diagonal operator associated to the form $\widetilde a_{\varepsilon}$, defined by
\[
\widetilde a_{\varepsilon}(\uu,\bm{v})=
\int_{\R^d}\bigg (\sum_{i=1}^m\langle\nabla u_i,\nabla v_i\rangle
+\sum_{i=1}^m(v_{ii}+\varepsilon)u_iv_i\bigg )dx
\]
for every $\uu,\bm{v}\in\widetilde{\mathcal V}_{\varepsilon}=\{\uu\in H^1(\R^d,\R^m): \sqrt{v_{jj}}u_j\in L^2(\R^d),\ j=1,\ldots,m\}$.
Since $0\le v_{ii}\in L^1_{\rm loc}(\R^d)$, and $\varepsilon >0$, it follows that $-\widetilde{\mathcal{H}}_{\varepsilon}$ generates a positive $C_0$-semigroup of contractions on $L^2(\R^d,\R^m)$ and the spectral bound $s(-\widetilde{\mathcal{H}}_{\varepsilon})$ is negative. Moreover, from the boundedness of the entries of the matrix-valued function $\widetilde{V}_M$ and the bounded perturbation theorem for semigroups, we deduce that $-{\mathcal H}_{\varepsilon,M}$ generates a $C_0$-semigroup $(e^{-t{\mathcal H}_{\varepsilon,M}})_{t\ge 0}$ on $L^2(\R^d,\R^m)$ given by the Trotter product formula 
\begin{eqnarray*} 
e^{-t{\mathcal H}_{\varepsilon,M}}\bm{f}=\lim_{n\to\infty}\left(e^{-\frac{t}{n}\widetilde{\mathcal{H}}_{\varepsilon}}
e^{-\frac{t}{n}\widetilde{V}_M}\right)^n\bm{f},\qquad\;\, \bm{f}\in L^2(\R^d,\R^m),\;\,t>0,
\end{eqnarray*}
see \cite[Corollary III.5.8]{EN}. 
On the other hand, since $(\widetilde{V}_M)_{ij}\le 0$ for all $i\neq j$, one deduces from \cite[Theorem 7.1]{BKR17} that the semigroup $(e^{-t\widetilde{V}_M})_{t\ge 0}$ is positive. Now, the positivity of $(e^{-t{\mathcal H}_{\varepsilon,M}})_{t\ge 0}$ follows from the positivity of $(e^{-t\widetilde{\mathcal{H}}_{\varepsilon}})_{t\ge 0}$ and the above Trotter product formula.
Furthermore, it follows from Lemma \ref{lem:rel_VepsM_Veps} that $a_{\varepsilon,M}(\f,\f)\ge \varepsilon \|\f\|_{L^2(\R^d,\R^m)}$ for every $\f\in  {\mathcal V}_{\varepsilon,M}$. Hence,
\begin{equation}\label{Marrak}
s(-\mathcal{H}_{\varepsilon ,M})<0.
\end{equation}
Using \eqref{Marrak} and \cite[Corollary 12.10]{BKR17}, one concludes that $\uu_{\varepsilon,M}\ge\bm{0}$.

Now, we fix $M_1,M_2\in\N$, with $M_1\le M_2$ and show that $\uu_{\varepsilon,M_1}\leq \uu_{\varepsilon,M_2}$. As it has been already shown, $\mathcal V_{\varepsilon,M_1}=\mathcal V_{\varepsilon,M_2}=\widetilde{\mathcal V}_{\varepsilon}$.
Further, since $V_{\varepsilon,M_2}\leq V_{\varepsilon,M_1}$ componentwise and $\uu_{\varepsilon,M_1},\uu_{\varepsilon,M_2}$ have nonnegative components, it follows that
\begin{align}
& \int_{\R^d}\bigg (\sum_{i=1}^m\langle\nabla (u_{\varepsilon,M_1})_i,\nabla v_i\rangle+\langle V_{\varepsilon,M_2}\uu_{\varepsilon,M_1},\bm{v}\rangle\bigg )dx \notag\\
\leq & \int_{\R^d}\bigg (\sum_{i=1}^m\langle\nabla (u_{\varepsilon,M_1})_i,\nabla v_i\rangle+\langle V_{\varepsilon,M_1}\uu_{\varepsilon,M_1},\bm{v}\rangle\bigg )dx \notag \\
= & \int_{\R^d}\langle\f,\bm{v}\rangle dx
\label{dis_u_eps_M_2}
\end{align}
for every $\bm{v}\in C^\infty_c(\R^d,\R^m)$ with nonnegative components. From the definition of $\uu_{\varepsilon,M_2}$ and \eqref{dis_u_eps_M_2} it follows that the function  $\bm{w}_{\varepsilon}=\uu_{\varepsilon,M_1}-\uu_{\varepsilon,M_2}$ satisfies the variational inequality 
\begin{align}
\label{dis_u_eps_m_1-u_eps_m_2}
\int_{\R^d}\bigg (\sum_{i=1}^m\langle\nabla (w_{\varepsilon})_i,\nabla v_i\rangle+\langle V_{\varepsilon,M_2}\bm{w}_{\varepsilon},\bm{v}\rangle\bigg )dx \leq 0        
\end{align}
for every $\bm{v}\in C^\infty_c(\R^d,\R^m)$ with nonnegative components. By density, \eqref{dis_u_eps_m_1-u_eps_m_2} can be extended to every $\bm{v}\in \widetilde{\mathcal V}_\varepsilon$, with nonnegative components. In particular, taking as $\bm{v}$ the function whose $i$-th component is $v_i=(w_{\varepsilon})_i\chi_{\{(w_{\varepsilon})_i>0\}}$ for every $i=1,\ldots,m$, we get
\begin{align*}
\int_{\R^d}\bigg (&\sum_{i=1}^m\|\nabla (w_{\varepsilon})_i\|^2\chi_{\{(w_{\varepsilon})_i>0\}}
 +\sum_{i,j=1}^m (V_{\varepsilon,M_2})_{ij}(w_{\varepsilon})_j(w_{\varepsilon})_i\chi_{\{(w_{\varepsilon})_i>0\}}\bigg )dx \leq 0.     
\end{align*}
Since the first term in the previous inequality is nonnegative, it follows that
\begin{align}
\label{dis_int_w_eps_M_1_2}
\int_{\R^d}\sum_{i,j=1}^m (V_{\varepsilon,M_2})_{ij}(w_{\varepsilon})_j(w_{\varepsilon})_i\chi_{\{(w_{\varepsilon})_i>0\}}dx\leq 0.   
\end{align}
Now, we split the sum under the integral sign as
\begin{align}
& \sum_{i,j=1}^m (V_{\varepsilon,M_2})_{ij}(w_{\varepsilon})_j(w_{\varepsilon})_i\chi_{\{(w_{\varepsilon})_i>0\}} \notag \\
= & \sum_{i,j=1}^m (V_{\varepsilon,M_2})_{ij}(w_{\varepsilon})_j\chi_{\{(w_{\varepsilon})_j>0\}}(w_{\varepsilon})_i\chi_{\{(w_{\varepsilon})_i>0\}} \notag \\
& + \sum_{i,j=1}^m (V_{\varepsilon,M_2})_{ij}(w_{\varepsilon})_j\chi_{\{(w_{\varepsilon})_j\leq0\}}(w_{\varepsilon})_i\chi_{\{(w_{\varepsilon,})_i>0\}} \notag \\
\eqqcolon &\ S_{1}+S_{2},
\label{spezz_somma_w_eps_1_2}
\end{align}
and we separately study the two sums. As far as $S_1$ is concerned, recalling that $v_i=(w_{\varepsilon})_i\chi_{\{(w_{\varepsilon})_i>0\}}$ for every $i=1,\ldots,m$, from the definition of $V_{\varepsilon,M_2}$ we get
\begin{align}
\label{stima_S_1_M}
S_{1}
=\langle V_{\varepsilon,M_2}\bm{v},\bm{v}\rangle\geq \varepsilon\|\bm{v}\|^2.
\end{align}
Next, we consider the term $S_{2}$. Let us notice that in the sum when $i=j$, the corresponding term vanishes, since the product $\chi_{\{(w_{\varepsilon})_i>0\}}\cdot \chi_{\{(w_{\varepsilon})_i\leq 0\}}$ appears. Hence,
\begin{equation}
\label{stima_S_2_M}
S_{2}
=\sum_{i,j=1,i\neq j}^m (V_{\varepsilon,M_2})_{ij}(w_{\varepsilon})_j\chi_{\{(w_{\varepsilon})_j\leq0\}}(w_{\varepsilon})_i\chi_{\{(w_{\varepsilon})_i>0\}}\geq0,
\end{equation}
since 
\begin{align*}
(V_{\varepsilon,M_2})_{ij}\leq0, \quad (w_{\varepsilon})_j\chi_{\{(w_{\varepsilon})_j\leq0\}}\leq 0, \quad (w_{\varepsilon})_i\chi_{\{(w_{\varepsilon})_i>0\}}\geq0
\end{align*}
for every $i,j=1,\ldots,m$ with $i\neq j$. From \eqref{dis_int_w_eps_M_1_2}, \eqref{spezz_somma_w_eps_1_2}, \eqref{stima_S_1_M} and \eqref{stima_S_2_M} we deduce that
\begin{align*}
\varepsilon\int_{\R^d}\sum_{i=1}^m((w_{\varepsilon})_i)^2\chi_{\{(w_{\varepsilon})_i>0\}}dx\leq 0.  
\end{align*}
This gives $(w_{\varepsilon})_i\chi_{\{(w_{\varepsilon})_i>0\}}=0$ a.e. in $\R^d$ for every $i=1,\ldots,m$, which means that $\bm{u}_{\varepsilon,M_1}-\bm{u}_{\varepsilon,M_2}\leq \bm{0}$ componentwise almost everywhere in $\R^d$.

Now, we can apply Lemma \ref{lem:rel_VepsM_Veps} to deduce  that $\langle V_{\varepsilon}\uu_{\varepsilon,M},\uu_{\varepsilon,M}\rangle\le \langle V_{\varepsilon,M}\uu_{\varepsilon,M},\uu_{\varepsilon,M}\rangle$ so that $V_{\varepsilon}^{1/2}\uu_{\varepsilon,M}\in L^2(\R^d,\R^m)$ and
$\|V_{\varepsilon}^{1/2}\uu_{\varepsilon,M}\|_2\le \|V_{\varepsilon,M}^{1/2}\uu_{\varepsilon,M}\|_2$. As a byproduct, we deduce that 
$\uu_{\varepsilon,M}\in{\mathcal V}_{\varepsilon}$ and
\begin{align*}
\int_{\R^d}\bigg (\sum_{i=1}^m\langle\nabla (u_{\varepsilon,M})_i,\nabla(u_{\varepsilon,M})_i\rangle+\langle V_{\varepsilon}\uu_{\varepsilon,M},\uu_{\varepsilon,M}\rangle\bigg )dx\le\int_{\R^d}\langle \f,\uu_{\varepsilon,M}\rangle dx.    
\end{align*}
Hence, the family $(\uu_{\varepsilon,M})_{M\in\N}$ is bounded in ${\mathcal V}_{\varepsilon}$ and we can determine a sequence $(M_n)_{n\in\N}$ diverging to $+\infty$ such that the sequence $(\uu_{\varepsilon,M_n})_{n\in\N}$ weakly converges in ${\mathcal V}_{\varepsilon}$ to some function $\tilde\uu_{\varepsilon}$. 

We now prove that $\tilde\uu_{\varepsilon}=\uu_{\varepsilon}\coloneqq({\mathcal H}+\varepsilon)^{-1}\f$. For this purpose, we begin by observing that, taking $\bm{v}=\uu_{\varepsilon,M}$ in \eqref{1},
we conclude that $\|V_{\varepsilon,M}^{1/2}\uu_{\varepsilon,M}\|_2\le\|\f\|_2$ for every $M>0$. Hence, up to a subsequence, we can assume
that $(V_{\varepsilon,M_n}^{1/2}\uu_{\varepsilon,M_n})_{n\in\N}$ weakly converges to some function $\bm{w}$ in $L^2(\R^d,\R^m)$.

Let us fix a function $\bm{v}\in C^{\infty}_c(\R^d,\R^m)$ and observe that 
($V^{1/2}_{\varepsilon,M_n}\bm{v})_{n\in\N}$ converges to $V^{1/2}_{\varepsilon}\bm{v}$ in $L^2(\R^d,\R^m)$ by the dominated convergence theorem. Indeed, $(V^{1/2}_{\varepsilon,M_n}\bm{v})_{n\in\N}$ pointwise converges to 
$V^{1/2}_{\varepsilon}\bm{v}$. Moreover, 
\begin{align*}
\|V^{1/2}_{\varepsilon,M_n}\bm{v}-V^{1/2}_{\varepsilon}\bm{v}\|^2
\le & 2 (\|V^{1/2}_{\varepsilon,M_n}\bm{v}\|^2+\|V^{1/2}_{\varepsilon}\bm{v}\|^2)\\
=& 2 (\langle V_{\varepsilon,M_n}\bm{v},\bm{v}\rangle+
\langle V_{\varepsilon}\bm{v},\bm{v}\rangle)\\
\le &2(\|V_{\varepsilon,M_n}\|+\|V_{\varepsilon}\|) \|\bm{v}\|^2 \\
\le &4\|V_{\varepsilon}\| \|\bm{v}\|^2
\end{align*}
for every $n\in\N$, and the last side of the previous chain of inequalities belongs to $L^1(\R^d)$.

We can infer that
\begin{align*}
\lim_{n\to +\infty}\int_{\R^d}\langle V_{\varepsilon,M_n}\uu_{\varepsilon,M_n},\bm{v}\rangle dx
=&\lim_{n\to +\infty}\int_{\R^d}\langle V_{\varepsilon,M_n}^{1/2}\uu_{\varepsilon,M_n},V_{\varepsilon,M_n}^{1/2}\bm{v}\rangle dx\\
=&\int_{\R^d}\langle \bm{w},V_{\varepsilon}^{1/2}\bm{v}\rangle dx    
\end{align*}
for every $\bm{v}\in C^{\infty}_c(\R^d,\R^m)$.
Indeed, the sequence $(V_{\varepsilon,M_n}^{1/2}\uu_{\varepsilon,M_n})_{n\in\N}$ is bounded and weakly converges in $L^2(\R^d,\R^m)$ to $\bm{w}$. Hence,
\begin{align*}
&\bigg |\int_{\R^d}\langle V_{\varepsilon,M_n}\bm{u}_{\varepsilon,M_n},\bm{v}\rangle dx-\int_{\R^d}\langle \bm{w},V_{\varepsilon}^{1/2}\bm{v}\rangle dx\bigg |\\
\le & \|V_{\varepsilon,M_n}^{1/2}\bm{u}_{\varepsilon,M_n}\|_2\|V_{\varepsilon,M_n}^{1/2}\bm{v}-V_\varepsilon^{1/2}\bm{v}\|_2
+\bigg |\int_{\R^d}\langle V_{\varepsilon,M_n}^{1/2}\uu_{\varepsilon,M_n}-\bm{w},V^{1/2}_{\varepsilon}\bm{v}\rangle dx\bigg |
\end{align*}
for every $\bm{v}\in C^{\infty}_c(\R^d,\R^m)$. 

Writing \eqref{1}, with $\bm{v}\in C^{\infty}_c(\R^d,\R^m)$, and letting $n$ tend to $\infty$ we obtain that
\begin{align}
\int_{\R^d}\bigg (\sum_{i=1}^m\langle\nabla (\tilde u_{\varepsilon})_i,\nabla v_i\rangle+\langle \bm{w},V_{\varepsilon}^{1/2}\bm{v}\rangle\bigg )dx=\int_{\R^d}\langle\f,\bm{v}\rangle\,dx.  
\label{2a}
\end{align}

To complete the proof, we need to identify $\bm{w}$ with the function $V_{\varepsilon}^{1/2}\tilde\uu_{\varepsilon}$.
For this purpose, we observe that
\begin{align}
\int_{\R^d}\langle V_{\varepsilon}^{1/2}\tilde\uu_{\varepsilon},\bm{v}\rangle dx=\lim_{n\to +\infty}
\int_{\R^d}\langle V_{\varepsilon}^{1/2}\uu_{\varepsilon,M_n},\bm{v}\rangle dx
\label{2}
\end{align}
for every $\bm{v}\in C^{\infty}_c(\R^d,\R^m)$.

Next, we write
\begin{align*}
V_{\varepsilon}^{1/2}\uu_{\varepsilon,M_n}=V_{\varepsilon}^{1/2}V_{\varepsilon,M_n}^{-1/2}V_{\varepsilon,M_n}^{1/2}\uu_{\varepsilon,M_n}
\end{align*}
for every $n\in\N$, so that
\begin{align*}
\langle V_{\varepsilon}^{1/2}\uu_{\varepsilon,M_n},\bm{v}\rangle
=\langle V_{\varepsilon,M_n}^{1/2}\uu_{\varepsilon,M_n},V_{\varepsilon,M_n}^{-1/2}V_{\varepsilon}^{1/2}\bm{v}\rangle
\end{align*}
for every $n\in\N$. Taking Lemma \ref {lem:rel_VepsM_Veps} into account, we can easily show that
\begin{align*} 
\|V_{\varepsilon,M_n}^{-1/2}V_{\varepsilon}^{1/2}\bm{v}\|^2\le \varepsilon^{-1}\|V_{\varepsilon}^{1/2}\bm{v}\|^2
\end{align*}
for every $n\in\N$. Since the right-hand side of the previous inequality defines a function in $L^1(\R^d)$, by the dominated convergence theorem we deduce that
$(V_{\varepsilon,M_n}^{-1/2}V_{\varepsilon}^{1/2}\bm{v})_{n\in\N}$ converges to $\bm{v}$ in 
$L^2(\R^d,\R^m)$. Thus, arguing as above,
we conclude that
\begin{align*}
\lim_{n\to +\infty}\int_{\R^d}\langle V_{\varepsilon}^{1/2}\uu_{\varepsilon,M_n},\bm{v}\rangle dx=&
\lim_{n\to +\infty}
\int_{\R^d}\langle V_{\varepsilon,M_n}^{1/2}\uu_{\varepsilon,M_n},V_{\varepsilon,M_n}^{-1/2}V_{\varepsilon}^{1/2}\bm{v}\rangle dx\notag\\
=&\int_{\R^d}\langle\bm{w},\bm{v}\rangle dx
\end{align*}
for every $\bm{v}\in C^{\infty}_c(\R^d,\R^m)$. Comparing this limit with the one in \eqref{2} we get that $\bm{w}=V_{\varepsilon}^{1/2}\tilde\uu_{\varepsilon}$.

Now, from \eqref{2a} we conclude that
\begin{align*}
\int_{\R^d}\bigg (\sum_{i=1}^m\langle\nabla (\tilde u_{\varepsilon})_i,\nabla v_i\rangle+\langle V_{\varepsilon}\tilde\uu_{\varepsilon},\bm{v}\rangle\bigg )dx=\int_{\R^d}\langle\f,\bm{v}\rangle dx  \end{align*}
for every $\bm{v}\in C^{\infty}_c(\R^d,\R^m)$ and, by density, for every $\bm{v}\in{\mathcal V}_{\varepsilon}$.
By uniqueness, it follows that $\tilde\uu_{\varepsilon}=\uu_{\varepsilon}$. 

Arguing as in the last part of the proof of Lemma \ref{lem:3.1} we infer that $(\|\uu_{\varepsilon,M_n}\|_{{\mathcal V}_\varepsilon})_{n\in\N}$ converges to $\|\uu_{\varepsilon}\|_{{\mathcal V}_\varepsilon}$, which implies that $(\uu_{\varepsilon,M_n})_{n\in\N}$ converges to $\uu_\varepsilon$ in ${\mathcal V}_\varepsilon$. Being the limit of a sequence of componentwise nonnegative functions, also $\uu_\varepsilon$ is a componentwise nonnegative function.

Finally, since $(\uu_{\varepsilon,M})_{n\in\N}$ is a componentwise increasing sequence, we conclude that the whole sequence $(\uu_{\varepsilon,M})_{M\in\N}$ converges to $\uu_\varepsilon$ in ${\mathcal V}_\varepsilon$.

For a general $\f\in L^2_c(\R^d,\R^m)$ we split $\f=\f^+-\f^-$, where $f^+_i=\max\{f_i,0\}$ and $f^-_i=\max\{-f_i,0\}$ for every $i=1,\ldots,m$. Applying the above arguments to $\f^+$ and $\f^-$, we get the assertion in the general case.
\end{proof}
 
Now, we can state and prove the following crucial result.

\begin{theo}
\label{prop:stime_L1}
Let $ \f\in L^{\infty}_{\text{c}}(\R^d,\R^m)$,
with all the components which are nonnegative on $\R^d$ and set $ \uu = \dot{\H}^{-1}\f$. Then, $\ V\uu$ and $\Delta \uu$ belong to $L^1(\R^d,\R^m)$ and there exists a positive constant $C$, independent of $\uu$, such that
\begin{equation}
\label{eq:VH^-1:limitato}
\int_{\R^d}\|V\uu\|dx \le C\int_{\R^d}\|\f\|dx,
\end{equation}
\begin{equation}
\label{eq:DeltaH^-1:limitato}
\int_{\R^d}\|\Delta \uu\|dx \le (1+C)\int_{\R^d}\|\f\|dx.
\end{equation}	
Moreover, $\uu\in W^{1,1}_{\text{loc}}(\R^d,\R^m)$ and for any measurable set $ E\subset\R^d $, with finite measure, it holds that 
\begin{align}
\label{stima-L1-loc-grad}
\int_{E}\|\nabla \uu\|dx\le C(d)\abs{E}^{\frac1d}\int_{\R^d}\|\f\|dx.
\end{align}
Finally, for all compact set $ K\subset\R^d$, it holds that
\begin{equation}
\int_{K}\|\uu\|dx\le C(K,d,\lambda_V)\int_{K}\|\f\|dx.
\label{stima-L1-loc}
\end{equation}
\end{theo}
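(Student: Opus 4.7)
The plan is to reduce the vector-valued problem to a scalar Schr\"odinger inequality via Kato's inequality (Lemma~\ref{lem:kato_vett}), derive the $L^1$ estimate on $\lambda_V\|\uu\|$ from the resulting scalar inequality, and then transfer it back to $\|V\uu\|$ by Hypothesis~\ref{hyp-2}. First, I would work with the approximations $\uu_{\varepsilon,M,N}=\H_{\varepsilon,M,N}^{-1}\f$, where $V_{\varepsilon,M,N}$ is the bounded matrix of Lemma~\ref{lemma-V-eps-M-N}. Since $V_{\varepsilon,M,N}$ has bounded entries, $\uu_{\varepsilon,M,N}\in H^2(\R^d,\R^m)$, and the Trotter/positivity arguments of Proposition~\ref{prop:conv_u_eps_M} yield componentwise nonnegativity of $\uu_{\varepsilon,M,N}$. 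Successive limits $N\to\infty$, $M\to\infty$, $\varepsilon\to 0^+$ (via Proposition~\ref{prop:conv_u_eps_M}, Lemma~\ref{lem:3.1} and Remark~\ref{rem:3.2}) produce $\uu_{\varepsilon,M}$, $\uu_\varepsilon=(\H+\varepsilon)^{-1}\f$ and $\uu=\dot{\H}^{-1}\f$, respectively.

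Applying Lemma~\ref{lem:kato_vett} to $\uu_{\varepsilon,M,N}$ and inserting the identity $\Delta\uu_{\varepsilon,M,N}=V_{\varepsilon,M,N}\uu_{\varepsilon,M,N}-\f$, together with the elementary bound $\chi_{\{\uu\neq\bm{0}\}}\|\uu\|^{-1}\langle\uu,\f\rangle\le\|\f\|$ and $\langle V_{\varepsilon,M,N}\uu,\uu\rangle\ge\lambda_{V_{\varepsilon,M,N}}\|\uu\|^2$, yields the scalar distributional inequality
\begin{equation*}
-\Delta\|\uu_{\varepsilon,M,N}\|+\lambda_{V_{\varepsilon,M,N}}\|\uu_{\varepsilon,M,N}\|\le\|\f\|.
\end{equation*}
Passing to the limit using Lemmas~\ref{lem:rel_VepsM_Veps} and \ref{lemma-V-eps-M-N} to monitor the minimum eigenvalue, I obtain $-\Delta\|\uu\|+\lambda_V\|\uu\|\le\|\f\|$ in $\mathcal{D}'(\R^d)$. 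Testing this against a smooth cutoff $\eta_R$ supported in $B_{2R}$ with $\eta_R\equiv 1$ on $B_R$, integrating by parts and letting $R\to\infty$ (the delicate step: the term $\int\|\uu\|(-\Delta\eta_R)\,dx$ must vanish in the limit thanks to the slow growth of $\uu$ inherited from $\uu\in\dot{\mathcal V}$ and the positivity of $\lambda_V$), I conclude $\int_{\R^d}\lambda_V\|\uu\|\,dx\le C\int_{\R^d}\|\f\|\,dx$. Since $V$ is symmetric and positive semidefinite, its operator norm equals $\Lambda_V$, so that $\|V\uu\|\le\Lambda_V\|\uu\|\le C\lambda_V\|\uu\|$ by Hypothesis~\ref{hyp-2}, which gives \eqref{eq:VH^-1:limitato}; then \eqref{eq:DeltaH^-1:limitato} follows from $\Delta\uu=V\uu-\f$ and the triangle inequality.

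For \eqref{stima-L1-loc-grad}, each scalar component solves $-\Delta u_i=f_i-(V\uu)_i\in L^1(\R^d)$, with $L^1$-norm controlled by $(1+C)\int\|\f\|\,dx$ by the previous step. Representing $u_i$ via convolution with the Newtonian potential $G$ of $-\Delta$ gives $\nabla u_i=\nabla G\ast(f_i-(V\uu)_i)$; since $|\nabla G(x)|\le c|x|^{-(d-1)}$, the weak Young convolution inequality yields $\|\nabla u_i\|_{L^{d/(d-1),\infty}(\R^d)}\le C(d)\int\|\f\|\,dx$, and H\"older's inequality in weak Lebesgue spaces produces $\int_E|\nabla u_i|\,dx\le C(d)|E|^{1/d}\|\nabla u_i\|_{L^{d/(d-1),\infty}}$. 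Summing over $i=1,\ldots,m$ yields \eqref{stima-L1-loc-grad}. For \eqref{stima-L1-loc}, I would cover $K$ by finitely many cubes $Q$ of common side-length $R$ and apply the Fefferman--Phong inequality \eqref{equation2.1} with $p=1$; bounding $\langle V\uu,\uu\rangle\|\uu\|^{-1}\le\Lambda_V\|\uu\|\le C\lambda_V\|\uu\|$, the left-hand side of \eqref{equation2.1} is controlled by $\int_Q\|\nabla\uu\|\,dx+\int_Q\lambda_V\|\uu\|\,dx$, already bounded by $\int\|\f\|\,dx$ by the previous two steps; dividing by the strictly positive average $\operatorname{av}_Q(\min\{CR^{-1},\lambda_V\})$ delivers the bound \eqref{stima-L1-loc} with constant depending on $K$, $d$ and $\lambda_V$.

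The main obstacle is twofold: carrying the distributional inequality $-\Delta\|\uu_{\varepsilon,M,N}\|+\lambda_{V_{\varepsilon,M,N}}\|\uu_{\varepsilon,M,N}\|\le\|\f\|$ through the triple approximation so that the limit retains the correct eigenvalue $\lambda_V$ (rather than $\lambda_V+\varepsilon$ or its truncations), and then rigorously justifying the cutoff argument that absorbs the Laplacian term, since a priori one only knows $\uu\in\dot{\mathcal V}\subset L^2_{\rm loc}(\R^d,\R^m)$ and not the decay of $\uu$ at infinity. Once this cutoff argument is in place, the transfer of the scalar bound to the vector setting is a painless consequence of Hypothesis~\ref{hyp-2} and the equation $\Delta\uu=V\uu-\f$.
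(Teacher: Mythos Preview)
Your overall architecture---triple approximation $\uu_{\varepsilon,M,N}\to\uu_{\varepsilon,M}\to\uu_\varepsilon\to\uu$, Kato's inequality at the most regular level, and Hypothesis~\ref{hyp-2} to convert $\lambda_V\|\uu\|$ into $\|V\uu\|$---matches the paper. The difference is in the \emph{order} of operations, and the gap you identify is real and is precisely what the paper's ordering avoids.

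You propose to carry the distributional inequality $-\Delta\|\uu_{\varepsilon,M,N}\|+\lambda_{V_{\varepsilon,M,N}}\|\uu_{\varepsilon,M,N}\|\le\|\f\|$ through the three limits and only then integrate against a cutoff $\eta_R$. This forces you to control $\int_{\R^d}\|\uu\|(-\Delta\eta_R)\,dx$ with nothing better than $\uu\in L^2_{\rm loc}$; membership in $\dot{\mathcal V}$ gives no decay at infinity, so this term need not vanish. The paper instead integrates \emph{first}, at the $\uu_{\varepsilon,M,N}$ level: because $V_{\varepsilon,M,N}$ is bounded, the bounded perturbation theorem guarantees $\uu_{\varepsilon,M,N}\in L^1(\R^d,\R^m)$, so testing against $\varphi_n\to 1$ with $\Delta\varphi_n\to 0$ kills the Laplacian term outright and yields the integrated bound
\[
\int_{\R^d}\lambda_{V_{\varepsilon,M,N}}\|\uu_{\varepsilon,M,N}\|\,dx\le\int_{\R^d}\|\f\|\,dx.
\]
The descent through the limits is then done on this \emph{integrated} inequality, not on the distributional one. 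The crucial extra ingredient you are missing is the pointwise comparison $\|\uu_{\varepsilon,M}\|\le\|\uu_{\varepsilon,M,N}\|$ for $N\ge N(\varepsilon,M,m)$, proved in the paper via a variational argument with the test function $((w_{\varepsilon,M,N})_i^+)_{i=1}^m$; this lets one replace $\|\uu_{\varepsilon,M,N}\|$ by $\|\uu_{\varepsilon,M}\|$ in the integral, and then Fatou (together with $\lambda_{V_{\varepsilon,M}}\ge\lambda_{V_\varepsilon}\ge\lambda_V$ from Lemma~\ref{lem:rel_VepsM_Veps}) and the a.e.\ convergences of Proposition~\ref{prop:conv_u_eps_M} and Remark~\ref{rem:3.2} complete the chain down to $\int\lambda_V\|\uu\|\,dx\le\int\|\f\|\,dx$.

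Your treatment of \eqref{stima-L1-loc-grad} via $\nabla G\in L^{d/(d-1),\infty}$ and weak-$L^p$ H\"older is equivalent to the paper's citation of B\'enilan--Br\'ezis--Crandall; your use of the Fefferman--Phong inequality for \eqref{stima-L1-loc} is also correct, though the paper applies it to $\uu_\varepsilon$ and passes to the limit rather than to $\uu$ directly.
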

	
\begin{proof}
Let us fix $\f\in L^{\infty}_{\text{c}}(\R^d,\R^m),$ with nonnegative components, $\varepsilon>0$ and set $\uu_{\eps} = (\H+\eps)^{-1}\f$.

Preliminarily, we show that $\uu_\varepsilon$ converges to $\uu$ almost everywhere in $\R^d$ as $\varepsilon$ tends to zero.
For this purpose, we begin by recalling that
from Remark \ref{rem:3.2} it follows that we can determine a sequence $(\varepsilon_n)_{n\in\N}$ converging to zero such that $(\uu_{\eps_n})$ converges almost everywhere to $\uu$ as $n$ tends to $\infty$. 
Next, we observe that, for every $\varepsilon_1<\varepsilon_2$, the resolvent identity shows that 
\begin{align*}
\uu_{\varepsilon_1}-\uu_{\varepsilon_2}
=&(\varepsilon_1+\H)^{-1}\f-(\varepsilon_2+\H)^{-1}\f\\
=&(\varepsilon_2-\varepsilon_1)(\varepsilon_1+\H)^{-1}(\varepsilon_2+\H)^{-1}\f
\end{align*}
and the right-hand side of this formula is a function which has nonnegative components in $\R^d$, due to Proposition \ref{prop:conv_u_eps_M}.
We conclude that the whole family $(\uu_{\eps})_{\varepsilon>0} $ is decreasing (with respect to $\eps$) and, consequently, it converges almost everywhere in $\R^d$ (to $\uu$). This implies, in particular, that $\uu$ is componentwise nonnegative in $\R^d$.
		
Now, we have all the tools to prove estimate
\eqref{eq:VH^-1:limitato}. For this purpose, we fix $M,N\in\N$ and set $\uu_{\varepsilon,M}={\mathcal H}_{\varepsilon,M}^{-1}\f$ and 
$\uu_{\varepsilon,M,N}={\mathcal H}_{\varepsilon,M,N}^{-1}\f$, where 
${\mathcal H}_{\varepsilon,M}$ (resp.
${\mathcal H}_{\varepsilon,M,N}$)
is the operator associated with $-\Delta+V_{\varepsilon,M}$ (resp. 
$-\Delta+V_{\varepsilon,M,N}$) as defined at the beginning of Section \ref{sect-defH},
and $V_{\varepsilon,M,N}$ is the matrix-valued function defined in Lemma \ref{lemma-V-eps-M-N}.

To improve the readability, we split the rest of the proof of \eqref{eq:VH^-1:limitato} into some steps.

{\em Step 1.} Here, we prove that
\begin{align}
\int_{\R^d}\lambda_{V_{\varepsilon,M,N}}\|\uu_{\varepsilon,M,N}\|dx\leq \int_{\R^d}\|\f\|dx,    
\label{disug-u-eps-M-N}
\end{align}
where, for every $x\in\R^d$, $\lambda_{V_{\varepsilon,M,N}}(x)$ denotes the minimum eigenvalue of the matrix $V_{\varepsilon,M,N}(x)$.

For this purpose, we fix $i\in\{1,\ldots,M\}$ and multiply both sides of the equation
\begin{align*}
-\Delta (u_{\varepsilon,M,N})_i
+(V_{\varepsilon,M,N}\uu_{\varepsilon,M,N})_i=f_i
\end{align*}
by $(u_{\varepsilon,M,N})_i/\|\uu_{\varepsilon,M,N}\|\chi_{\{\uu_{\varepsilon,M,N}\neq\bm{0}\}}$ and sum up $i$ from $1$ to $m$, we get
\begin{align}
&-\frac{\langle\Delta \uu_{\varepsilon,M,N},\uu_{\varepsilon,M,N}\rangle}{\|\uu_{\varepsilon,M,N}\|}\chi_{\{\uu_{\varepsilon,M,N}\neq\bm{0}\}}
+\frac{\langle V_{\varepsilon,M,N}\uu_{\varepsilon,M,N},\uu_{\varepsilon,M,N}\rangle}{\|\uu_{\varepsilon,M,N}\|}\chi_{\{\uu_{\varepsilon,M,N}\neq\bm{0}\}}\notag\\
=&\frac{\langle \f,\uu_{\varepsilon,M,N}\rangle}{\|\uu_{\varepsilon,M,N}\|}\chi_{\{\uu_{\varepsilon,M,N}\neq\bm{0}\}}.   
\label{eq_epsilon_N}
\end{align}

Since the entries of the matrix $V_{\varepsilon,M,N}$ are bounded, by the bounded perturbation theorem for semigroups and classical results (see \cite[Chapter C-II, Examples 1.5, p. 251]{arendt86}), $\uu_{\varepsilon,M,N}$ and
$\Delta\uu_{\varepsilon,M,N}$ belong to $L^1(\R^d,\R^m)$. Hence, from Lemma \ref{lem:kato_vett} it follows that
\begin{align}
\label{kato_vett}
\Delta \|\uu_{\varepsilon,M,N}\|\geq\chi_{\{\uu_{\eps,M,N}\neq\bm{0}\}}\|\uu_{\varepsilon,M,N}\|^{-1}\langle \uu_{\varepsilon,M,N},\Delta\uu_{\varepsilon,M,N}\rangle     
\end{align}
in the sense of distributions. By replacing \eqref{kato_vett} in \eqref{eq_epsilon_N} we deduce that
\begin{align*}
-\Delta\|\uu_{\varepsilon,M,N}\|
+\frac{\langle V_{\varepsilon,M,N}\uu_{\varepsilon,M,N},\uu_{\varepsilon,M,N}\rangle}{\|\uu_{\varepsilon,M,N}\|}\chi_{\{\uu_{\varepsilon,M,N}\neq\bm{0}\}}\leq \|\f\|
\end{align*}
in the sense of distributions. 

Let $(\varphi_n)_{n\in\N}\subset C_c^\infty(\R^d)$ be a sequence of non-negative functions such that $\varphi_n$ converges to $1$ and $\Delta\varphi_n$ converges to $0$ in a dominated way, as $n$ tends to $\infty$. Multiplying both sides of the previous inequality by $\varphi_n$ and integrating over $\R^d$, taking \eqref{kato_vett} into account, gives
\begin{align*}
& -\int_{\R^d}\|\uu_{\varepsilon,M,N}\|\Delta \varphi_n dx
+\int_{\R^d}\frac{\langle V_{\varepsilon,M,N}\uu_{\varepsilon,M,N}\uu_{\varepsilon,M,N}\rangle}{\|\uu_{\varepsilon,M,N}\|}\chi_{\{\uu_{\varepsilon,M,N}\neq\bm{0}\}}\varphi_n dx\\
= & -\langle\Delta\|\uu_{\varepsilon,M,N}\|,\varphi_n\rangle_{{\mathcal D'},\mathcal D}+\int_{\R^d}\frac{\langle V_{\varepsilon,M,N}\uu_{\varepsilon,M,N},\uu_{\varepsilon,M,N}\rangle}{\|\uu_{\varepsilon,M,N}\|}
\chi_{\{\uu_{\varepsilon,M,N}\neq\bm{0}\}}
\varphi_n dx\\
\le & \int_{\R^d}\|\f\|\varphi_n dx.
\end{align*}
By dominated convergence, we can let $n$ tend to $+\infty$ to deduce that
\begin{align*}
\int_{\R^d} \frac{\langle V_{\varepsilon,M,N}\uu_{\varepsilon,M,N},\uu_{\varepsilon,M,N}\rangle}{\|\uu_{\varepsilon,M,N}\|}\chi_{\{\uu_{\varepsilon,M,N}\neq\bm{0}\}}\,dx
\leq \int_{\R^d} \|\f\|dx.   
\end{align*}
From this inequality, estimate \eqref{disug-u-eps-M-N}
follows at once.
  
{\em Step 2.} Here, we prove that $\|\uu_{\varepsilon,M,N}\|\geq \|\uu_{\varepsilon,M}\|$ for every $N\geq N(\varepsilon,M,m)$, where $N(\varepsilon,M,m)$ is defined in Lemma \ref{lemma-V-eps-M-N} to guarantee that $V_{\varepsilon,M,N}\ge\frac{\varepsilon}{2}$ in the sense of forms for every $N\ge N(\varepsilon,M,m)$.

To prove the claim, we observe that $V_{\varepsilon,M}\geq V_{\varepsilon,M,N}$ componentwise for every $N\in\N$. 
Since $\uu_{\varepsilon,M}$ has nonnegative components (see Proposition \ref{prop:conv_u_eps_M}), this implies that
\begin{align}
& \int_{\R^d}\bigg (\sum_{i=1}^m\langle\nabla (u_{\varepsilon,M})_i,\nabla v_i\rangle+\langle V_{\varepsilon,M,N}\uu_{\varepsilon,M},\bm{v}\rangle\bigg )dx \notag\\
\leq & \int_{\R^d}\bigg (\sum_{i=1}^m\langle\nabla (u_{\varepsilon,M})_i,\nabla v_i\rangle+\langle V_{\varepsilon,M}\uu_{\varepsilon,M},\bm{v}\rangle\bigg )dx \notag \\
= & \int_{\R^d}\langle\f,\bm{v}\rangle dx
\label{dis_u_eps_M}
\end{align}
for every $\bm{v}\in C^\infty_c(\R^d,\R^m)$ with nonnegative components. Further, the definition of $\bm{u}_{\varepsilon,M,N}$ gives
\begin{align}
\int_{\R^d}\bigg (\sum_{i=1}^m\langle\nabla (u_{\varepsilon,M,N})_i,\nabla v_i\rangle+\langle V_{\varepsilon,M,N}\uu_{\varepsilon,M,N},\bm{v}\rangle\bigg )dx = & \int_{\R^d}\langle\f,\bm{v}\rangle dx    
\label{ug_u_eps_M_N}
\end{align}
for every $\bm{v}\in C^\infty_c(\R^d,\R^m)$.
If we set $\bm{w}_{\varepsilon,M,N}\coloneqq\uu_{\varepsilon,M}-\uu_{\varepsilon,M,N}$, from \eqref{dis_u_eps_M} and \eqref{ug_u_eps_M_N} it follows that
\begin{align}
\label{dis_u_eps_m_-u_eps_m_n}
\int_{\R^d}\bigg (\sum_{i=1}^m\langle\nabla (w_{\varepsilon,M,N})_i,\nabla v_i\rangle+\langle V_{\varepsilon,M,N}\bm{w}_{\varepsilon,M,N},\bm{v}\rangle\bigg )dx \leq 0 \end{align}
for every $\bm{v}\in C^\infty_c(\R^d,\R^m)$ with nonnegative components. By density, \eqref{dis_u_eps_m_-u_eps_m_n} can be extended to every $\bm{v}\in H^1(\R^d,\R^m)$ with nonnegative components. In particular, taking as $\bm{v}$ the function whose $i$-th component is $v_i=(w_{\varepsilon,M,N})_i\chi_{\{(w_{\varepsilon,M,N})_i>0\}}\in H^1(\R^d)$ for every $i=1,\ldots,m$, we get
\begin{align*}
\int_{\R^d}\bigg (&\sum_{i=1}^m\|\nabla (w_{\varepsilon,M,N})_i\|^2\chi_{\{(w_{\varepsilon,M,N})_i>0\}}\\
& +\sum_{i,j=1}^m (V_{\varepsilon,M,N})_{ij}(w_{\varepsilon,M,N})_j(w_{\varepsilon,M,N})_i\chi_{\{(w_{\varepsilon,M,N})_i>0\}}\bigg )dx \leq 0.     
\end{align*}
Since the first term in the previous inequality is nonnegative, it follows that
\begin{align}
\label{dis_int_w_eps_M_N}
\int_{\R^d}\sum_{i,j=1}^m (V_{\varepsilon,M,N})_{ij}(w_{\varepsilon,M,N})_j(w_{\varepsilon,M,N})_i\chi_{\{(w_{\varepsilon,M,N})_i>0\}}dx\leq 0.   
\end{align}
Now we split the sum under the integral sign as
\begin{align}\label{spezz_somma_w_eps_M_N}
& \sum_{i,j=1}^m (V_{\varepsilon,M,N})_{ij}(w_{\varepsilon,M,N})_j(w_{\varepsilon,M,N})_i\chi_{\{(w_{\varepsilon,M,N})_i>0\}} \notag \\
= & \sum_{i,j=1}^m (V_{\varepsilon,M,N})_{ij}(w_{\varepsilon,M,N})_j\chi_{\{(w_{\varepsilon,M,N})_j>0\}}(w_{\varepsilon,M,N})_i\chi_{\{(w_{\varepsilon,M,N})_i>0\}} \notag \\
& + \sum_{i,j=1}^m (V_{\varepsilon,M,N})_{ij}(w_{\varepsilon,M,N})_j\chi_{\{(w_{\varepsilon,M,N})_j\leq0\}}(w_{\varepsilon,M,N})_i\chi_{\{(w_{\varepsilon,M,N})_i>0\}} \notag \\
\eqqcolon &\ S_{1,N}+S_{2,N},
\end{align}
and we separately study the two sums. As far as $S_{1,N}$ is concerned, recalling that $v_i=(w_{\varepsilon,M,N})_i\chi_{\{(w_{\varepsilon,M,N})_i>0\}}$ for every $i=1,\ldots,m$, from Lemma \ref{lemma-V-eps-M-N} we get
\begin{align}
\label{stima_S_1}
S_{1,N}
=\langle V_{\varepsilon,M,N}\bm{v},\bm{v}\rangle\geq \frac\varepsilon2\|\bm{v}\|^2
\end{align}
for every $N\geq N(\varepsilon,M,m)$. Now we consider $S_{2,N}$. Let us notice that in the sum when $i=j$, the corresponding term vanishes, since the product $\chi_{\{(w_{\varepsilon,M,N})_i>0\}}\cdot \chi_{\{(w_{\varepsilon,M,N})_i\leq 0\}}$ appears. Hence,
\begin{equation}
\label{stima_S_2}
S_{2,N}
=\sum_{i,j=1,i\neq j}^m (V_{\varepsilon,M,N})_{ij}(w_{\varepsilon,M,N})_j\chi_{\{(w_{\varepsilon,M,N})_j\leq0\}}(w_{\varepsilon,M,N})_i\chi_{\{(w_{\varepsilon,M,N})_i>0\}}\geq0,
\end{equation}
since 
\begin{align*}
(V_{\varepsilon,M,N})_{ij}\leq0, \quad (w_{\varepsilon,M,N})_j\chi_{\{(w_{\varepsilon,M,N})_j\leq0\}}\leq 0, \quad (w_{\varepsilon,M,N})_i\chi_{\{(w_{\varepsilon,M,N})_i>0\}}\geq0
\end{align*}
for every $i,j=1,\ldots,m$ with $i\neq j$. From \eqref{dis_int_w_eps_M_N}, \eqref{spezz_somma_w_eps_M_N}, \eqref{stima_S_1} and \eqref{stima_S_2} we deduce that
\begin{align*}
\frac\varepsilon2\int_{\R^d}\sum_{i=1}^m((w_{\varepsilon,M,N})_i)^2\chi_{\{(w_{\varepsilon,M,N})_i>0\}}dx\leq 0    
\end{align*}
for every $N\geq N(\varepsilon,M,m)$. This gives $(w_{\varepsilon,M,N})_i\chi_{\{(w_{\varepsilon,M,N})_i>0\}}=0$ almost everywhere in $\R^d$ for every $i=1,\ldots,m$ and every $N\geq N(\varepsilon,M,m)$, which means that $\bm{u}_{\varepsilon,M}-\bm{u}_{\varepsilon,M,N}\leq \bm{0}$ componentwise almost everywhere in $\R^d$, for every $N\geq N(\varepsilon,M,m)$. Recalling that, by Proposition \ref{prop:conv_u_eps_M}, $\uu_{\varepsilon,M}\geq\bm{0}$ componentwise, we conclude that 
\begin{equation}
\|\uu_{\varepsilon,M}\|\leq \|\uu_{\varepsilon,M,N}\|
\label{stima-1}
\end{equation}
for every $N\geq N(\varepsilon,M,m)$.
  
Since $\lambda_{V_{\varepsilon,M,N}}>0$ for every $N\geq N(\varepsilon,M,m)$, from \eqref{disug-u-eps-M-N} and taking \eqref{stima-1} into account, we deduce that
\begin{align}
\int_{\R^d} \lambda_{V_{\varepsilon,M,N}}\|\uu_{\varepsilon,M}\|dx\leq \int_{\R^d}\|\f\|dx
\label{stima-2}
\end{align}
for every $N\geq N(\varepsilon,M,m)$. For every fixed $x\in\R^d$ and $N$ sufficiently large, the matrix $V_{\varepsilon,M,N}(x)$  equals the matrix $V_{\varepsilon,M}(x)$, and, in particular, for such values of $N$, $\lambda_{V_{\varepsilon,M,N}}(x)=\lambda_{V_{\varepsilon,M}}(x)$. By applying Fatou's Lemma to estimate \eqref{stima-2}, we deduce that 
\begin{align*}
\int_{\R^d} \lambda_{V_{\varepsilon,M}}\|\uu_{\varepsilon,M}\|dx\leq \int_{\R^d}\|\f\|dx.
\end{align*}
Lemma \ref{lem:rel_VepsM_Veps} gives that $\lambda_{V_{\varepsilon,M}}\geq \lambda_{V_\varepsilon}>0$ and hence
\begin{align}\label{eq:stima-3}
\int_{\R^d} \lambda_{V_{\varepsilon}}\|\uu_{\varepsilon,M}\|dx\leq \int_{\R^d}\|\f\|dx.
\end{align}

By Proposition \ref{prop:conv_u_eps_M}, $\uu_{\varepsilon,M}$ converges to $\uu_{\varepsilon}$ in ${\mathcal V}_{\varepsilon}$, so that it converges to $\uu_{\varepsilon}$ in $L^2(\R^d,\R^m)$. In particular, up to a sequence, we can assume that $\uu_{\varepsilon,M}$ converges almost everywhere in $\R^d$ to $\uu_{\varepsilon}$ as $M$ tends to $+\infty$. Applying once again Fatou's Lemma to estimate \eqref{eq:stima-3}, we conclude that
\begin{align}
\int_{\R^d} \lambda_{V_{\varepsilon}}\|\uu_{\varepsilon}\|dx\leq \int_{\R^d}\|\f\|dx
\label{stima-L1-V-eps-u-eps}
\end{align}
for every $\varepsilon>0$.
To conclude, we notice that, since $\uu_\varepsilon$ pointwise converges almost everywhere in $\R^d$ to $\uu$ as $\varepsilon$ tends to $0$ and $0\le\lambda_V\leq \lambda_{V_\varepsilon}$, by Fatou's Lemma we get
		\begin{equation}
			\label{int_lambda_u}
			\int_{\R^d} \lambda_{ V}\|\uu\|dx
			\leq \liminf_{\varepsilon\to0}\int_{\R^d}\lambda_V\|\uu_\varepsilon\|dx
			\leq \liminf_{\varepsilon\to0}\int_{\R^d}\lambda_{V_\varepsilon}\|\uu_\varepsilon\|dx
			\leq \int_{\R^d}\|\f\|dx.
		\end{equation}
On the other hand, From Hypothesis \ref{hyp-2} it follows that
$\|V\uu\|\le \Lambda_V\|\uu\|\le C\lambda_V\|\uu\|$,	which implies that
\begin{align*}
\int_{\R^d} \|V\uu\|\,dx\leq C\int_{\R^d}\| \f\|\,dx. 
\end{align*}
		
Now we consider the term $\Delta \uu$. Since $\uu_\varepsilon$ converges to $\uu$ in $L^1(\Omega,\R^m)$ for every bounded measurable set $\Omega\subset\R^d$ (see Remark \ref{rem:3.2}), $\Delta u_i$ is the limit of $\Delta(u_{\varepsilon})_i$ in the sense of distributions, for every $i=1,\ldots,m$. If $\varphi\in C^\infty_c(\R^d)$, then we get
\begin{align}
\label{espressione_laplaciano}
-\langle\Delta (u_\varepsilon)_i,\varphi\rangle_{{\mathcal D}',{\mathcal D}}
= \int_{\R^d} f_i\varphi\,dx-\int_{\R^d} (V\uu_{\varepsilon})_{i}\varphi\,dx-\int_{\R^d}\varepsilon(u_\varepsilon)_i\varphi\,dx.
\end{align}
Since $\varphi$ has compact support, the last integral in the right-hand side vanishes as $\varepsilon$ tends to $0$. We recall that 
$\|V\uu_\varepsilon-V\uu\|\leq C\lambda_V\|\uu_\varepsilon-\uu\|$
for every $\varepsilon>0$. Since $\uu_\varepsilon$ pointwise converges almost everywhere in $\R^d$ to $\uu$ as $\varepsilon$ tends to $0$, and 
$0\le \uu_\eps \le \uu$, it follows that $\lambda_V\|\uu_\varepsilon-\uu\|
\leq 2\lambda_V\|\uu\|$ and the right-hand side of this inequality belongs to $L^1(\R^d)$ (due to \eqref{int_lambda_u}). By the dominated convergence theorem we conclude that $V\uu_\varepsilon$ converges to $V\uu$ in $L^1(\R^d,\R^m)$ as $\varepsilon$ vanishes. Letting $\varepsilon$ to $0$ in \eqref{espressione_laplaciano} we infer that $-\Delta u_i=f_i-(V\uu)_i$ in the sense of distributions for every $i=1,\ldots,m$. This means that $\Delta u_i\in L^1(\R^d)$ for every $i=1,\ldots,m$, i.e., $\Delta\uu\in L^1(\R^d,\R^m)$ and
$\|\Delta\uu\|_1\leq (1+C)\|\f\|_1$. Formula \eqref{eq:DeltaH^-1:limitato} is so proved.
		
We turn to the gradient estimates. From
\eqref{stima-L1-V-eps-u-eps} and observing that $\Lambda_{V_{\varepsilon}}=\Lambda_V+\varepsilon\le C\lambda_V+\varepsilon\le C(\lambda_V+\varepsilon)=C\lambda_{V_{\varepsilon}}$, we conclude that
$V\uu_{\varepsilon}$, $V_{\varepsilon}\uu_{\varepsilon}$ belong to $L^1(\R^d,\R^m)$ and
\begin{align}
\|V\uu_{\varepsilon}\|_1+
\|V_{\varepsilon}\uu_{\varepsilon}\|_1
\le 2\|\Lambda_{V_{\varepsilon}}\uu_{\varepsilon}\|_1
\le C\|\f\|_1.
\label{stimaaaa}
\end{align}
Hence, by difference, $\Delta \uu_\varepsilon\in L^1(\R^d,\R^m)$ and
\begin{equation}
\label{eq:Delta-eps}
\|\Delta\uu_\eps\|_1\le (1+C)\|\f\|_1.
\end{equation}

From \cite[Appendix A]{benilan-brezis-crandall:1975} 
we infer that, if $E$ is a measurable subset of $\R^d$ with bounded measure and $1\le q<\frac{d}{d-1}$, then
\begin{align}
\label{stima_locale_gradiente}
\int_E|\nabla (u_\varepsilon)_i|^qdx
\leq C(d,q)|E|^{1-\frac{(d-1)q}{d}}\|\Delta (u_\varepsilon)_i\|_1^q,
\end{align}
for every $i=1,\ldots,m$. By combining \eqref{eq:Delta-eps} and \eqref{stima_locale_gradiente}, we infer that
\begin{align}
\int_E|\nabla(u_\varepsilon)_i|^q dx
&\leq C'(d,q)|E|^{1-\frac{(d-1)q}{d}}\|\f\|^q_1
\label{eq:stima:grad:ueps:i}
\end{align}  
for every $i=1,\ldots,m$. Further, if $Q$ is a cube, then, from estimate \eqref{equation2.1}, it follows that
\begin{align*}
\int_Q\left(\|\nabla \uu_\varepsilon\|
+\|V\uu_\varepsilon\|\right)dx
\ge\int_Q\left(\|\nabla \uu_\varepsilon\|
+\frac{\langle V\uu_{\varepsilon},\uu_{\varepsilon}\rangle}{\|\uu_{\varepsilon}\|}\right)dx\ge C(Q,\lambda_V)\int_Q\|\uu_{\varepsilon}\|dx.
\end{align*}
By putting together this last inequality with \eqref{stimaaaa} and \eqref{eq:stima:grad:ueps:i} 
it follows that
\begin{align}
\int_Q\|\uu_\varepsilon\|dx \leq C(Q,d,\lambda_V)\|\f\|_1.
\label{eq:stima:ueps:i}
\end{align}

From estimates \eqref{eq:stima:grad:ueps:i} and \eqref{eq:stima:ueps:i} and the Sobolev embedding theorems we deduce that $\uu_{\varepsilon}\in W^{1,q}_{\rm loc}(\R^d,\R^m)$ for every $\varepsilon>0$ and $q\in\left[1,\displaystyle \frac {d}{d-1}\right)$, and its $W^{1,q}(\Omega,\R^m)$-norm is bounded by a constant independent of $\varepsilon>0$, for every bounded and measurable set $\Omega\subset \R^d$. By taking the weak limit of $\uu_\varepsilon$ in $W^{1,q}(\Omega,\R^m)$ as $\varepsilon$ tends to $0$, we infer that $\uu\in W^{1,q}(\Omega,\R^m)$ for every bounded and measurable $\Omega\subset \R^d$ and every $q\in\left (1,\displaystyle \frac {d}{d-1}\right)$. On the other hand, since $(\uu_\varepsilon)_{\varepsilon>0}$ converges weakly in $W^{1,q}(\Omega,\R^m)$, we deduce that $\|\uu \|_{W^{1,q}(\Omega,\R^m)}\le \liminf_{\varepsilon\to 0}\|\uu_\varepsilon\|_{W^{1,q}(\Omega,\R^m)}$. Thus, \eqref{eq:stima:grad:ueps:i} and Fatou's lemma imply that
$$
\int_E|\nabla u_i|^qdx
\leq C'(d,q)|E|^{1-\frac{(d-1)q}{d}}\|\f\|_1^q
$$
for every $i=1,\ldots,m$ and $q\in \left (1,\displaystyle \frac {d}{d-1}\right)$.

Such estimate holds true also when $q=1$. Indeed, by applying H\"older inequality, we get
\begin{align*}
\int_{E}|\nabla u_i|dx
\leq & |E|^{(q-1)/q}\left(\int_E|\nabla u_i|^qdx\right)^{1/q}
\leq (C'(d,q))^{1/q}|E|^{\frac{1}{d}}\|\f\|_1
\end{align*}
and \eqref{stima-L1-loc-grad} is proved.
Estimate \eqref{stima-L1-loc} follows by letting $\varepsilon$ tend to zero in \eqref{eq:stima:ueps:i} and aking Remark \ref{rem:3.2} into account.
The proof is now complete.
\end{proof}

From Theorem \ref{prop:stime_L1} we deduce the following corollaries.

\begin{cor}
\label{coro-1}
For every $\varepsilon>0$, each operator $({\mathcal H}+\varepsilon)^{-1}$ extends to a bounded operator mapping from $L^1(\R^d,\R^m)$ into itself and 
\begin{align*}
\|\varepsilon(\mathcal H+\varepsilon)^{-1}\bm{f}\|_1\leq \|\bm{f}\|_1.  
\end{align*}
Moreover, for every $\f\in L^1(\R^d,\R^m)$, the function 
$(\mathcal H+\varepsilon)^{-1}\f$ is the unique solution of the equation $-\Delta\uu+V_{\varepsilon}\uu=\f$ in the sense of distributions, which belongs to $L^1(\R^d,\R^m)$, together with its Laplacian and the function $V\uu$.
\end{cor}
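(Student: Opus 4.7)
The plan is to leverage Theorem~\ref{prop:stime_L1} applied to the shifted operator $\H+\eps$, remove the sign restriction through the positivity of the resolvent (Proposition~\ref{prop:conv_u_eps_M}), extend to $L^1$ by density, and settle uniqueness via Kato's inequality (Lemma~\ref{lem:kato_vett}).

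\textbf{Step 1: Reduction to the shifted operator and nonnegative data.} First I would observe that $V_\eps:=V+\eps\id$ still fulfils Hypotheses~\ref{hyp-1}--\ref{hyp-2} with the same constant $C$, and $\lambda_{V_\eps}\ge\eps>0$, so that Remark~\ref{rem-EURO24} identifies the homogeneous operator associated with $V_\eps$ with $\H+\eps$. Applying Theorem~\ref{prop:stime_L1} with $V_\eps$ in place of $V$ yields, for every $\f\in L^\infty_c(\R^d,\R^m)$ with nonnegative components and $\uu_\eps:=(\H+\eps)^{-1}\f$, the inequalities $\|V_\eps\uu_\eps\|_1\le C\|\f\|_1$ and $\|\Delta\uu_\eps\|_1\le(1+C)\|\f\|_1$. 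Moreover the intermediate estimate \eqref{stima-L1-V-eps-u-eps} in the proof of that theorem reads $\int_{\R^d}\lambda_{V_\eps}\|\uu_\eps\|\,dx\le\|\f\|_1$, which combined with $\lambda_{V_\eps}\ge\eps$ produces the sharp bound $\eps\|\uu_\eps\|_1\le\|\f\|_1$.

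\textbf{Step 2: Signed data and density.} For a general $\f\in L^\infty_c(\R^d,\R^m)$ I would split $\f=\f^+-\f^-$ componentwise, with $f_i^{\pm}=\max\{\pm f_i,0\}$, and let $\uu_\eps^{\pm}:=(\H+\eps)^{-1}\f^{\pm}$. Proposition~\ref{prop:conv_u_eps_M} guarantees that both $\uu_\eps^{\pm}$ are componentwise nonnegative, hence
\[
|\uu_\eps|\le\uu_\eps^{+}+\uu_\eps^{-}=(\H+\eps)^{-1}|\f|
\]
componentwise, where $|\f|=(|f_1|,\dots,|f_m|)$. Since the Euclidean norm on $\R^m$ is monotone in each nonnegative entry and $\||\uu_\eps|\|=\|\uu_\eps\|$ pointwise, this gives $\|\uu_\eps\|\le\|(\H+\eps)^{-1}|\f|\|$ pointwise; applying Step~1 to the nonnegative datum $|\f|$ then yields $\eps\|\uu_\eps\|_1\le\||\f|\|_1=\|\f\|_1$. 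Similar comparisons, combined with Hypothesis~\ref{hyp-2}, provide uniform bounds on $\|V\uu_\eps\|_1$ and $\|\Delta\uu_\eps\|_1$ in terms of $\|\f\|_1$. Since $L^\infty_c(\R^d,\R^m)$ is dense in $L^1(\R^d,\R^m)$, the operator $(\H+\eps)^{-1}$ extends by continuity to a bounded operator on $L^1(\R^d,\R^m)$ that satisfies the stated contraction inequality.

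\textbf{Step 3: Existence and uniqueness in $L^1$.} For $\f\in L^1(\R^d,\R^m)$, pick $(\f_n)\subset L^\infty_c(\R^d,\R^m)$ converging to $\f$ in $L^1$ and set $\uu_n:=(\H+\eps)^{-1}\f_n$. The uniform bounds of Step~2 make $(\uu_n)$, $(V\uu_n)$ and $(\Delta\uu_n)$ Cauchy in $L^1(\R^d,\R^m)$, and passing to the limit in $-\Delta\uu_n+V_\eps\uu_n=\f_n$ produces the required distributional solution $\uu$ of $-\Delta\uu+V_\eps\uu=\f$ with $\uu,V\uu,\Delta\uu\in L^1(\R^d,\R^m)$. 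The main (if mild) technical point is uniqueness: for two such solutions $\uu_1,\uu_2$, the difference $\bm w=\uu_1-\uu_2$ has $\bm w,\Delta\bm w\in L^1_{\rm loc}(\R^d,\R^m)$ and solves $-\Delta\bm w+V_\eps\bm w=\bm 0$ in distributions, so Lemma~\ref{lem:kato_vett} applies and yields
\[
\Delta\|\bm w\|\ge\chi_{\{\bm w\ne\bm 0\}}\|\bm w\|^{-1}\langle\bm w,V_\eps\bm w\rangle\ge\eps\|\bm w\|
\]
in distributions. Testing against nonnegative cutoffs $\varphi_n\to 1$ with $\Delta\varphi_n\to 0$ in a dominated way, exactly as in the proof of Theorem~\ref{prop:stime_L1}, forces $\eps\|\bm w\|_1\le 0$, whence $\bm w=\bm 0$.
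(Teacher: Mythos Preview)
Your proof is correct and follows essentially the same route as the paper: the contraction bound comes from the intermediate estimate \eqref{stima-L1-V-eps-u-eps} (which already gives $\int\lambda_{V_\eps}\|\uu_\eps\|\,dx\le\|\f\|_1$ for $\uu_\eps=(\H+\eps)^{-1}\f$), the extension to $L^1$ is by density, and uniqueness is obtained from Kato's inequality together with the standard cutoff argument. Your Step~2, making the passage from componentwise nonnegative $\f$ to arbitrary $\f\in L^\infty_c$ explicit via the splitting $\f=\f^+-\f^-$ and the positivity of the resolvent (Proposition~\ref{prop:conv_u_eps_M}), is a point the paper leaves implicit in the phrase ``straightforward density argument''; your treatment is cleaner here. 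The one small redundancy is in Step~1: re-applying Theorem~\ref{prop:stime_L1} with $V_\eps$ in place of $V$ is not needed, since \eqref{stima-L1-V-eps-u-eps} was already established in the course of proving that theorem for the original $V$---but your argument is valid either way.
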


\begin{proof}
The first part of the statement follows from a straightforward density argument using formula \eqref{stima-L1-V-eps-u-eps}.

To prove the second part of the statement, we begin by showing the uniqueness part. For this purpose, we fix $\uu\in L^1(\R^d,\R^m)$ such that $V\uu, \Delta\uu\in L^1(\R^d,\R^m)$ and $\Delta\uu -V_{\varepsilon}\uu=\bm{0}$ almost everywhere in $\R^d$.
Then, arguing as in Step 1 of the proof of Theorem \ref{prop:stime_L1}, we can show that $\uu=\bm{0}$. Indeed, scalarly multiplying both sides of the previous equation by $\|\uu\|^{-1}\uu\chi_{\{\uu\neq\bm{0}\}}$ and using Lemma \ref{lem:kato_vett}, we deduce that
\begin{eqnarray*}
\int_{\R^d}(\Delta\|\uu\|)\varphi dx-\int_{\R^d}\|\uu\|^{-1}\langle V_{\varepsilon}\uu,\uu\rangle \chi_{\{\uu\neq\bm{0}\}}\varphi dx\ge 0    
\end{eqnarray*}
for every nonnegative function $\varphi\in C^{\infty}_c(\R^d)$. Now, taking a sequence $(\varphi_n)_{n\in\N}$ of nonnegative smooth functions converging in a dominated way to $1$ in $\R^d$ and with $\Delta\varphi_n$ which converges uniformly in $\R^d$ to zero, we can infer that
\begin{eqnarray*}
\int_{\R^d}\|\uu\|^{-1}\langle V_{\varepsilon}\uu,\uu\rangle \chi_{\{\uu\neq\bm{0}\}}dx\le 0
\end{eqnarray*}
which implies that
$\displaystyle\int_{\R^d}\varepsilon\|\uu\|dx\le 0$,
so that $\uu$ identically vanishes in $\R^d$.

Now, to prove the existence part, we observe that the proof of Theorem \ref{prop:stime_L1}
shows that the operator $({\mathcal H}+\varepsilon)^{-1}$ can be extended to a bounded operator from $L^1(\R^d,\R^m)$ into the set of functions $\uu\in L^1(\R^d,\R^m)$ such that $\Delta\uu, V\uu\in L^1(\R^d,\R^m)$. Let us fix $\f\in L^1(\R^d,\R^m)$ and a sequence $(\f_n)_{n\in\N}$ of smooth and compactly supported functions which converges to $\f$ in $L^1(\R^d,\R^m)$.
The function $\uu_n=({\mathcal H}+\varepsilon)^{-1}\f_n$ solves the equation $-\Delta \uu_n+V_{\varepsilon}\uu_n=\f_n$ for every $n\in\N$. 
Moreover, $\uu_n$ and $-\Delta\uu_n+V_{\varepsilon}\uu_n$ converge, respectively, to $\uu=({\mathcal H}+\varepsilon)^{-1}\f$ and $\f$ in $L^1(\R^d,\R^m)$ as $n$ tends to $+\infty$. Hence, 
\begin{align*}
\int_{\R^d}f_j\varphi dx=&\lim_{n\to +\infty}
\int_{\R^d}(-\Delta\uu_n+V_{\varepsilon}\uu_n)_j\varphi\,dx\\
=&\lim_{n\to +\infty}
\int_{\R^d}(-(u_{n})_j\Delta\varphi+(V_{\varepsilon}\uu_n)_j\varphi)\,dx\\
=&\int_{\R^d}(-u_j\Delta\varphi+(V_{\varepsilon}\uu)_j\varphi)\, dx\\
=&\int_{\R^d}(-\Delta\uu+V_{\varepsilon}\uu)_j\varphi\,dx
\end{align*}
for every $\varphi\in C^{\infty}_c(\R^d)$ and every $j\in\{1,\ldots,m\}$. This yields $-\Delta\uu+V_{\varepsilon}\uu=\f$ as claimed.
\end{proof}
 
\begin{cor}
\label{thm:ext_L1}
The restriction of the operator $\dot{\mathcal H}^{-1}$ to $L^\infty_c(\R^d,\R^m)$ extends to a linear operator $\dot{{\mathcal H}}_{-1}$ from $L^1(\R^d,\R^m)$ to the set of all functions $\uu\in L^1_{\rm loc}(\R^d,\R^m)$ such that $\Delta\uu, V\uu\in L^1(\R^d,\R^m)$.  Moreover, there exists a positive constant $C$ such that
\begin{align*}
\|V\dot{{\mathcal H}}_{-1}\f\|_1\leq C\|\f\|_1,\qquad\;\,
\|\Delta\dot{{\mathcal H}}_{-1}\f\|_1\leq (1+C)\|\f\|_1
\end{align*}
for every $\f\in L^1(\R^d,\R^m)$.
\end{cor}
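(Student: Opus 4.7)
The plan is to extend the operator $\dot{\mathcal H}^{-1}$ by density from $L^\infty_c(\R^d,\R^m)$ to $L^1(\R^d,\R^m)$, using the $L^1$-estimates from Theorem \ref{prop:stime_L1} as uniform bounds that make the Cauchy sequence argument go through. The first preliminary step is to remove the sign restriction from Theorem \ref{prop:stime_L1}: for a general $\f\in L^\infty_c(\R^d,\R^m)$, I split $f_i=f_i^+-f_i^-$ componentwise, apply the theorem to the nonnegative-component functions $\f^+$ and $\f^-$ separately, and use linearity of $\dot{\mathcal H}^{-1}$ on $L^2_c(\R^d,\R^m)\cap \dot{\mathcal V}'$ together with the triangle inequality to obtain estimates \eqref{eq:VH^-1:limitato}, \eqref{eq:DeltaH^-1:limitato} and \eqref{stima-L1-loc} for any $\f\in L^\infty_c(\R^d,\R^m)$ (up to harmless multiplicative constants).

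Next, fix $\f\in L^1(\R^d,\R^m)$ and choose a sequence $(\f_n)_{n\in\N}\subset L^\infty_c(\R^d,\R^m)$ that converges to $\f$ in $L^1(\R^d,\R^m)$. Set $\uu_n\coloneqq \dot{\mathcal H}^{-1}\f_n$. By the extended versions of \eqref{eq:VH^-1:limitato}, \eqref{eq:DeltaH^-1:limitato} and \eqref{stima-L1-loc} applied to $\f_n-\f_k$, the sequence $(\uu_n)_{n\in\N}$ is Cauchy in $L^1_{\rm loc}(\R^d,\R^m)$, while $(V\uu_n)_{n\in\N}$ and $(\Delta\uu_n)_{n\in\N}$ are Cauchy in $L^1(\R^d,\R^m)$. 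Hence there exist $\uu\in L^1_{\rm loc}(\R^d,\R^m)$ and $\bm{g},\bm{h}\in L^1(\R^d,\R^m)$ with $\uu_n\to \uu$ in $L^1_{\rm loc}$, $V\uu_n\to\bm{g}$ and $\Delta\uu_n\to\bm{h}$ in $L^1$, and the limits do not depend on the chosen approximating sequence thanks to the same estimates. I then define $\dot{\mathcal H}_{-1}\f\coloneqq\uu$; linearity is immediate, and the two claimed inequalities follow by passing to the limit in $\|V\uu_n\|_1\le C\|\f_n\|_1$ and $\|\Delta\uu_n\|_1\le (1+C)\|\f_n\|_1$.

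It remains to identify $\bm{g}$ with $V\uu$ and $\bm{h}$ with $\Delta\uu$. For the Laplacian this is painless: since $\uu_n\to\uu$ in $L^1_{\rm loc}$, one has $\Delta\uu_n\to\Delta\uu$ in $\mathcal{D}'(\R^d,\R^m)$, while the $L^1$-convergence $\Delta\uu_n\to \bm{h}$ also gives convergence in $\mathcal{D}'$, so uniqueness of distributional limits yields $\bm{h}=\Delta\uu$ and hence $\Delta\uu\in L^1(\R^d,\R^m)$. For $V\uu$, the natural obstacle is that the entries of $V$ are merely in $L^1_{\rm loc}$, so $L^1_{\rm loc}$-convergence of $\uu_n$ does not directly give distributional convergence of $V\uu_n$. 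I bypass this by extracting a subsequence (still denoted $\uu_n$) such that $\uu_n\to\uu$ almost everywhere in $\R^d$; then $V\uu_n\to V\uu$ a.e., and by extracting a further subsequence from $(V\uu_n)$, which converges to $\bm{g}$ in $L^1(\R^d,\R^m)$, I obtain $\bm{g}=V\uu$ a.e.

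I expect the main subtlety to be precisely this last identification of $V\uu$ with the $L^1$-limit $\bm{g}$: because $V$ is only locally integrable and unbounded in general, the identification cannot be done by straightforward distributional testing and needs the pointwise a.e. argument via subsequence extraction. Everything else is a routine Cauchy sequence/density extension built on top of the $L^1$-maximal inequalities already established in Theorem \ref{prop:stime_L1}.
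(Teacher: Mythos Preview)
The paper does not give an explicit proof of this corollary, presenting it as a direct consequence of Theorem~\ref{prop:stime_L1}. Your density-extension argument is correct and is precisely the standard approach implicitly intended: the componentwise splitting $\f=\f^+-\f^-$ to lift the sign restriction, the Cauchy-sequence argument using \eqref{eq:VH^-1:limitato}, \eqref{eq:DeltaH^-1:limitato} and \eqref{stima-L1-loc}, and the identification of limits (distributional for $\Delta\uu$, pointwise a.e.\ along a subsequence for $V\uu$) are all sound.
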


We can now prove the following uniqueness result for the equation $-\Delta\uu+V\uu=\f\in L^1(\R^d,\R^m)$.

\begin{theo}
\label{thm:uniq_L1}
If $\uu\in C^\infty_c(\R^d,\R^m)$ and we set $\f\coloneqq-\Delta\uu+V\uu\in L^1(\R^d,\R^m)$, then $\uu=\dot{{\mathcal H}}_{-1}\f$.  
\end{theo}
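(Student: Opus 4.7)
The plan is to view $\uu$ as the $\varepsilon \to 0^+$ limit of the regularized solutions $\uu_\varepsilon := (\mathcal H + \varepsilon)^{-1}\f$, defined on $L^1$ by Corollary \ref{coro-1}, and to identify that limit with $\dot{\mathcal H}_{-1}\f$. Since $\uu \in C^\infty_c(\R^d,\R^m)$ has compact support, the functions $\uu$, $\Delta\uu$ and $V\uu$ all lie in $L^1(\R^d,\R^m)$ (the last because $V \in L^1_{\rm loc}$). For each $\varepsilon>0$, $\uu$ solves $-\Delta\uu + V_\varepsilon\uu = \f + \varepsilon\uu$ in the sense of distributions, with right-hand side in $L^1$. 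The uniqueness part of Corollary \ref{coro-1} then yields $\uu = (\mathcal H+\varepsilon)^{-1}(\f + \varepsilon\uu)$, so by linearity of the $L^1$-extension,
\begin{equation*}
\uu = \uu_\varepsilon + \varepsilon(\mathcal H+\varepsilon)^{-1}\uu.
\end{equation*}

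The second summand vanishes in $L^1_{\rm loc}$ as $\varepsilon \to 0^+$: $\uu \in L^\infty_c \subset L^2 \cap \dot{\mathcal V}'$ (the latter inclusion by Remark \ref{rem:3.2}), so Lemma \ref{lem:3.1} gives the convergence of $(\mathcal H+\varepsilon)^{-1}\uu$ to $\dot{\mathcal H}^{-1}\uu$ in $\dot{\mathcal V}$; in particular, the family is bounded in $\dot{\mathcal V}$, which embeds continuously into $L^2_{\rm loc}(\R^d,\R^m)$ by the Fefferman-Phong inequality \eqref{equation2.1} with $p=2$. Consequently $\varepsilon(\mathcal H+\varepsilon)^{-1}\uu \to 0$ in $L^2_{\rm loc}$, and a fortiori in $L^1_{\rm loc}$.

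The central step is to show $\uu_\varepsilon \to \dot{\mathcal H}_{-1}\f$ in $L^1_{\rm loc}$ as $\varepsilon\to 0^+$. Approximate $\f$ by $\f_n \in L^\infty_c$ with $\f_n \to \f$ in $L^1$, and decompose
\begin{equation*}
\uu_\varepsilon - \dot{\mathcal H}_{-1}\f = (\mathcal H+\varepsilon)^{-1}(\f - \f_n) + \bigl[(\mathcal H+\varepsilon)^{-1}\f_n - \dot{\mathcal H}^{-1}\f_n\bigr] + \bigl[\dot{\mathcal H}^{-1}\f_n - \dot{\mathcal H}_{-1}\f\bigr].
\end{equation*}
For each fixed $n$, the middle bracket tends to zero as $\varepsilon\to 0^+$ by Lemma \ref{lem:3.1} applied to $\f_n \in L^2\cap\dot{\mathcal V}'$, and the last bracket tends to zero in $L^1_{\rm loc}$ as $n\to\infty$ by the very construction of $\dot{\mathcal H}_{-1}$ in Corollary \ref{thm:ext_L1}. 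For the first term I would extend estimate \eqref{eq:stima:ueps:i} from the proof of Theorem \ref{prop:stime_L1}, first to signed $L^\infty_c$ data by decomposing into positive and negative components, and then to arbitrary $L^1$ data by density together with Corollary \ref{coro-1}, in order to obtain
\begin{equation*}
\int_K \|(\mathcal H+\varepsilon)^{-1}\bm{h}\|\,dx \le C(K,d,\lambda_V)\|\bm{h}\|_1, \qquad \bm{h}\in L^1(\R^d,\R^m),
\end{equation*}
uniformly in $\varepsilon>0$ for every compact $K\subset\R^d$. A standard three-epsilon argument then delivers the desired convergence.

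Combining the two convergences with the identity $\uu = \uu_\varepsilon + \varepsilon(\mathcal H+\varepsilon)^{-1}\uu$ and letting $\varepsilon \to 0^+$ yields $\uu = \dot{\mathcal H}_{-1}\f$ almost everywhere in $\R^d$. The principal obstacle, as I see it, is the uniform-in-$\varepsilon$ local $L^1$ bound on the resolvent family at the level of $L^1$ data: one must track carefully how the constants produced by the Fefferman-Phong inequality depend on $\lambda_V$ and on the compact set, and verify that the estimates from Section \ref{sec:L1} — formulated for nonnegative $\f\in L^\infty_c$ — propagate to all of $L^1$ with constants independent of $\varepsilon$.
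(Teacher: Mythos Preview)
Your proposal is correct and follows the paper's overall strategy: the identity $\uu = (\mathcal H+\varepsilon)^{-1}\f + \varepsilon(\mathcal H+\varepsilon)^{-1}\uu$ from Corollary~\ref{coro-1}, the approximation of $\f$ by $\f_n\in L^\infty_c$, the three-term decomposition, and the extension of the uniform local estimate \eqref{eq:stima:ueps:i} to $L^1$ data by density. The paper does exactly this, and your worry about the $\varepsilon$-uniformity of the constant is unfounded: $C(Q,d,\lambda_V)$ in \eqref{eq:stima:ueps:i} comes from the Fefferman--Phong inequality applied with $V$ (not $V_\varepsilon$) combined with the $\varepsilon$-independent bound \eqref{stimaaaa}, so the extension is straightforward.

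The one genuine difference --- a simplification in your favor --- is the handling of $\varepsilon(\mathcal H+\varepsilon)^{-1}\uu\to 0$. You obtain it directly from Lemma~\ref{lem:3.1}: since $\uu\in L^2\cap\dot{\mathcal V}'$, the family $(\mathcal H+\varepsilon)^{-1}\uu$ is bounded in $\dot{\mathcal V}\hookrightarrow L^2_{\rm loc}$, and multiplying by $\varepsilon$ does the job. The paper instead establishes the semigroup domination $\|e^{-t\mathcal H}\uu\|\le e^{t\Delta}\|\uu\|$ via Ouhabaz's form-domination criterion (requiring the preparatory Lemma~\ref{lem:risultati:Maichine} and the subsequent lemma on the form $a$), and then performs an explicit heat-kernel computation to show $\varepsilon(\varepsilon-\Delta)^{-1}\|\uu\|\to 0$ in $L^1_{\rm loc}$. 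Your argument bypasses this entire apparatus at no cost; it is the same mechanism the paper already uses for the middle term $(\mathcal H+\varepsilon)^{-1}\f_n\to\dot{\mathcal H}^{-1}\f_n$.
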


To prove this result, we need some preliminary tools.
To begin with, we recall a result obtained in \cite[Lemma 2.4]{MR18}.
\begin{lemma}
\label{lem:risultati:Maichine}
For every ${\uu}\in H^1(\R^d,\R^m)$, the function  $\norm{\uu}$ belongs to $H^1(\R^d)$ and 
\begin{equation}\label{eq:D:norma:u}
		\D\norm{\uu} = \frac1{\norm{\uu}}\sum_{j=1}^m (u_j\D u_j)\chi_{\{\uu\ne\bf0\}},
	\end{equation}
	\begin{equation}\label{eq:norma:D:norma:u}
		\norm{\D\norm{\uu}}^2 \le\sum_{j=1}^m\norm{\D u_j}^2.
	\end{equation}
\end{lemma}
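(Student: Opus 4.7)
The plan is to prove the chain-rule identity by the standard smooth regularization $u_\varepsilon\coloneqq(\|\uu\|^2+\varepsilon^2)^{1/2}$, $\varepsilon>0$, for which one can differentiate without worrying about the zero set of $\uu$, and then pass to the limit $\varepsilon\downarrow 0$. Since the map $\R^m\ni v\mapsto(\|v\|^2+\varepsilon^2)^{1/2}$ is $C^1$ with bounded gradient $v/(\|v\|^2+\varepsilon^2)^{1/2}$, the chain rule for compositions of Sobolev functions with $C^1$ maps (or a direct density argument using smooth approximations $\uu_n\in C^\infty(\R^d,\R^m)\cap H^1$) yields $u_\varepsilon\in H^1(\R^d)$ with
\begin{equation*}
\nabla u_\varepsilon=\frac{1}{u_\varepsilon}\sum_{j=1}^m u_j\nabla u_j.
\end{equation*}

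Next, I would extract the uniform gradient bound via Cauchy--Schwarz: since $|u_j|\le\|\uu\|\le u_\varepsilon$, one gets
\begin{equation*}
\|\nabla u_\varepsilon\|^2\le\frac{1}{u_\varepsilon^2}\Big\|\sum_{j=1}^m u_j\nabla u_j\Big\|^2\le\frac{\|\uu\|^2}{u_\varepsilon^2}\sum_{j=1}^m\|\nabla u_j\|^2\le\sum_{j=1}^m\|\nabla u_j\|^2.
\end{equation*}
In particular, $(\nabla u_\varepsilon)_{\varepsilon>0}$ is bounded in $L^2(\R^d,\R^d)$ uniformly in $\varepsilon$, and $u_\varepsilon\to\|\uu\|$ pointwise (monotonically from above) and in $L^2(\R^d)$ by dominated convergence. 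It follows that $\|\uu\|\in L^2(\R^d)$ and $\nabla u_\varepsilon\to\nabla\|\uu\|$ in $\mathcal D'(\R^d,\R^d)$, so $\|\uu\|\in H^1(\R^d)$ and $\|\uu\|$ satisfies \eqref{eq:norma:D:norma:u} upon sending $\varepsilon\to 0$ in the previous pointwise bound and using Fatou.

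It remains to identify $\nabla\|\uu\|$ with the right-hand side of \eqref{eq:D:norma:u}. The key observation is the classical Stampacchia property that, for each $j$, the weak gradient $\nabla u_j$ vanishes almost everywhere on $\{u_j=\bm 0\}$; in particular $\sum_{j=1}^m u_j\nabla u_j=0$ a.e.\ on $\{\uu=\bm 0\}$. Hence the limiting pointwise expression $\chi_{\{\uu\neq\bm 0\}}\|\uu\|^{-1}\sum_j u_j\nabla u_j$ is well defined a.e., and $u_\varepsilon^{-1}\sum_j u_j\nabla u_j$ converges to it pointwise a.e.\ and is dominated by the $L^2$ function $(\sum_j\|\nabla u_j\|^2)^{1/2}$. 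Dominated convergence then promotes the pointwise limit to $L^2(\R^d,\R^d)$, and uniqueness of distributional limits identifies this with $\nabla\|\uu\|$, yielding \eqref{eq:D:norma:u}.

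The only delicate point is the behaviour on the coincidence set $\{\uu=\bm 0\}$: without Stampacchia's lemma the formal expression $\|\uu\|^{-1}\sum_j u_j\nabla u_j$ would be genuinely ambiguous there, and one might worry that $\nabla\|\uu\|$ picks up an extra contribution supported on $\{\uu=\bm 0\}$. Once this is ruled out (since $\nabla u_j=0$ a.e.\ on $\{u_j=0\}\supseteq\{\uu=\bm 0\}$), the characteristic function $\chi_{\{\uu\neq\bm 0\}}$ in \eqref{eq:D:norma:u} is consistent with the pointwise limit of $\nabla u_\varepsilon$ and the proof is complete.
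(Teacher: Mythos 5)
Your regularization argument is essentially the standard proof of this lemma; the paper itself does not prove it but simply cites \cite[Lemma 2.4]{MR18}, whose proof (and the paper's own proof of Lemma \ref{lem:kato_vett}) uses exactly the approximation $u_\varepsilon=(\norm{\uu}^2+\varepsilon^2)^{1/2}$ that you employ, so your route is the expected one and the core of the argument — the chain rule for $u_\varepsilon$, the Cauchy--Schwarz bound $\norm{\nabla u_\varepsilon}\le\bigl(\sum_j\norm{\nabla u_j}^2\bigr)^{1/2}$, dominated convergence of the gradients in $L^2$, and identification of the distributional limit — is correct and complete.

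Two small points deserve a touch-up, though neither is a genuine gap. First, $u_\varepsilon\ge\varepsilon$ everywhere, so $u_\varepsilon\notin L^2(\R^d)$ and the statements ``$u_\varepsilon\in H^1(\R^d)$'' and ``$u_\varepsilon\to\norm{\uu}$ in $L^2(\R^d)$'' are not literally true on the whole space; you should either replace $u_\varepsilon$ by $u_\varepsilon-\varepsilon$ (which is dominated by $\norm{\uu}$ and has the same gradient) or content yourself with convergence in $L^1_{\rm loc}$, which already suffices to get $\nabla u_\varepsilon\to\nabla\norm{\uu}$ in $\mathcal D'$; note also that $\norm{\uu}\in L^2(\R^d)$ is immediate from $\uu\in L^2(\R^d,\R^m)$ and needs no limiting argument. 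Second, the appeal to Stampacchia's lemma is superfluous: on $\{\uu=\bm 0\}$ every coefficient $u_j$ vanishes, so $\sum_j u_j\nabla u_j=\bm 0$ there regardless of whether $\nabla u_j$ vanishes, and $u_\varepsilon^{-1}\sum_j u_j\nabla u_j$ is identically zero on that set; hence the pointwise limit is $\chi_{\{\uu\ne\bm 0\}}\norm{\uu}^{-1}\sum_j u_j\nabla u_j$ with no extra input, and dominated convergence plus uniqueness of distributional limits already rules out any singular contribution on $\{\uu=\bm 0\}$.
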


In the proof of Theorem \ref{thm:uniq_L1}, we will apply a domination argument. More precisely, we will use the following estimate
	\begin{equation}\label{eq:domination:result}
	\norm{e^{-t\mathcal{H}}{\uu}}\le e^{t\Delta}\norm{\uu},\qquad\;\,t>0,
\end{equation}
which holds true for every $\uu\in L^2(\R^d,\R^m)$, where
$(e^{t\Delta})_{t\ge0}$ is the semigroup generated by the realization of the Laplacian in $L^2(\R^d)$.
Such estimate can be obtained by an abstract argument, due to Ouhabaz \cite[Theorem 2.30]{Ouhabaz}, which can applied provided we prove the following result.

\begin{lemma}
The domain $\mathcal{V}$ of the form $a$ associated to the operator $\mathcal{H}$	
satisfies the following properties:
\begin{enumerate}[\rm (i)]
\item
${\uu}\in\mathcal{V}$ implies $\norm{\uu}\in H^1(\R^d)$,
\item
${\uu}\in\mathcal{V},\ f\in H^1(\R^d)$ such that $|f|\le\norm{\uu}$ implies $|f|\sign{\uu}\in\mathcal{V}$.
\end{enumerate}
Moreover, for every $({\uu},f)\in\mathcal{V}\times H^1(\R^d)$ such that $|f|\le\norm{\uu}$ it holds that
\begin{equation}\label{eq:form:inequality}
		a({\uu},|f|\sign{\uu}) \ge b(\norm{\uu},|f|),
\end{equation}
where $\sign{\uu} = \dfrac{\uu}{\norm{\uu}}\chi_{\{\uu\ne \bm{0}\}}$ and $b$ is the form associated to the scalar operator $-\Delta$ in $L^2(\R^d)$, i.e.,
\begin{equation*}
b(u,v)=\int_{\R^d}\langle \D u(x),\D v(x)\rangle\,dx,\qquad u,v\in	H^1(\R^d).
\end{equation*}

\end{lemma}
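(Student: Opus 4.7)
Part (i) is immediate from Lemma \ref{lem:risultati:Maichine}, since $\mathcal{V}\subset H^1(\R^d,\R^m)$.

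For (ii) my plan is to regularize. I would introduce $u_\varepsilon\coloneqq(\|\uu\|^2+\varepsilon^2)^{1/2}$ and the approximants $\bm{v}_\varepsilon\coloneqq(|f|/u_\varepsilon)\uu$. Since $|f|\le\|\uu\|\le u_\varepsilon$, the scalars $|f|/u_\varepsilon$ and $|u_j|/u_\varepsilon$ are bounded by $1$, hence $\|\bm{v}_\varepsilon\|\le|f|$, and a direct differentiation, combined with the identity $u_\varepsilon\D u_\varepsilon=\sum_j u_j\D u_j$ (from $u_\varepsilon^2=\|\uu\|^2+\varepsilon^2$), yields
\[
\D(v_\varepsilon)_j=\frac{u_j}{u_\varepsilon}\D|f|+\frac{|f|}{u_\varepsilon}\D u_j-\frac{|f|u_j}{u_\varepsilon^2}\D u_\varepsilon,
\]
with each coefficient bounded in modulus by $1$. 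Passing to the limit $\varepsilon\to 0$: on $\{\uu\ne\bm{0}\}$, $\bm{v}_\varepsilon\to|f|\sign{\uu}$ pointwise; on $\{\uu=\bm{0}\}$ one has $|f|=0$ (because $|f|\le\|\uu\|=0$), and the right-hand side of the displayed gradient formula vanishes identically. Since $|\D(v_\varepsilon)_j|\le|\D|f||+|\D u_j|+|\D u_\varepsilon|$, all of which are $L^2$-functions (using \eqref{eq:norma:D:norma:u} to bound $\|\D u_\varepsilon\|$ uniformly in $\varepsilon$), dominated convergence gives $\bm{v}_\varepsilon\to|f|\sign{\uu}$ strongly in $H^1(\R^d,\R^m)$. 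Finally,
\[
\langle V(|f|\sign{\uu}),|f|\sign{\uu}\rangle=\frac{|f|^2}{\|\uu\|^2}\langle V\uu,\uu\rangle\chi_{\{\uu\ne\bm{0}\}}\le\langle V\uu,\uu\rangle\in L^1(\R^d),
\]
so $V^{1/2}(|f|\sign{\uu})\in L^2(\R^d,\R^m)$ and hence $|f|\sign{\uu}\in\mathcal{V}$.

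For the form inequality \eqref{eq:form:inequality} I would again work first with $\bm{v}_\varepsilon$ and then let $\varepsilon\to0$. Substituting the gradient expression above and using $\sum_j u_j\D u_j=u_\varepsilon\D u_\varepsilon$ one obtains, after cancellation,
\[
\sum_j\D u_j\cdot\D(v_\varepsilon)_j=\D|f|\cdot\D u_\varepsilon+\frac{|f|}{u_\varepsilon}\bigg(\sum_j\|\D u_j\|^2-\|\D u_\varepsilon\|^2\bigg).
\]
The bracket is pointwise nonnegative by the Cauchy--Schwarz argument that yields \eqref{eq:norma:D:norma:u} (applied to $u_\varepsilon$ in place of $\|\uu\|$), and $|f|\ge 0$, so that term contributes nonnegatively. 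The potential contribution $\langle V\uu,\bm{v}_\varepsilon\rangle=(|f|/u_\varepsilon)\langle V\uu,\uu\rangle$ is also nonnegative, since $V\ge 0$. Therefore $a(\uu,\bm{v}_\varepsilon)\ge\int_{\R^d}\D|f|\cdot\D u_\varepsilon\,dx=b(u_\varepsilon,|f|)$. Dominated convergence sends the right-hand side to $b(\|\uu\|,|f|)$ (using $\D u_\varepsilon\to\D\|\uu\|$ a.e.\ together with \eqref{eq:D:norma:u} as a dominating bound), while the strong convergence of $\bm{v}_\varepsilon$ to $|f|\sign{\uu}$ in $\mathcal{V}$ established in (ii) gives $a(\uu,\bm{v}_\varepsilon)\to a(\uu,|f|\sign{\uu})$. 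This yields \eqref{eq:form:inequality}.

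The principal technical point I expect to manage carefully is the a.e.\ pointwise behaviour of the regularizations on $\{\uu=\bm{0}\}$: one must invoke $|f|\le\|\uu\|$ to get $f=0$ there and Stampacchia's theorem to get $\D|f|=0$ a.e.\ on $\{f=0\}$, so that the gradient of $|f|\sign{\uu}$ — defined a priori only on $\{\uu\ne\bm{0}\}$ — matches the pointwise limit of $\D\bm{v}_\varepsilon$ almost everywhere, consistently with the convention $\sign{\uu}=\bm{0}$ on $\{\uu=\bm{0}\}$. Once this reconciliation is in place, the remaining steps (the $L^2$-bound, the two passages to the limit, and the verification that the bracket term is nonnegative) are routine and follow the same pattern as Lemma \ref{lem:kato_vett}.
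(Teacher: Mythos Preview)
Your proposal is correct and follows essentially the same strategy as the paper: regularize $\sign{\uu}$ to avoid the singularity on $\{\uu=\bm{0}\}$, compute gradients, and pass to the limit by dominated convergence, then handle the potential term via $|f|^2/\|\uu\|^2\le 1$. The only cosmetic differences are that the paper uses the regularizer $\|\uu\|+\varepsilon$ rather than your $(\|\uu\|^2+\varepsilon^2)^{1/2}$, and for \eqref{eq:form:inequality} the paper computes the pointwise integrand directly at the limit (using the gradient formula for $|f|\sign{\uu}$ already obtained in (ii)) instead of proving the inequality for $\bm{v}_\varepsilon$ and then letting $\varepsilon\to 0$; the algebra and the key nonnegativity observation are identical. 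One small imprecision: you invoke ``strong convergence in $\mathcal{V}$'' for the passage $a(\uu,\bm{v}_\varepsilon)\to a(\uu,|f|\sign{\uu})$, but in (ii) you only argued $H^1$-convergence; the missing $V^{1/2}$-part follows by the same dominated convergence, since $\langle V\bm{v}_\varepsilon,\bm{v}_\varepsilon\rangle\le\langle V\uu,\uu\rangle\in L^1$.
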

\begin{proof}
Property (i) follows from Lemma \ref{lem:risultati:Maichine}.
	
Let us now prove property (ii). For this purpose, let ${\uu}\in\mathcal{V}, \ f\in H^1(\R^d)$ be such that $|f|\le\norm{\uu}$ and set 
$\displaystyle\bm{g}_{\varepsilon}=|f|\frac{\uu}{\norm{\uu}+\eps}$ for every $\varepsilon>0$.

Note that $\bm{g}_{\varepsilon}$ converges to $|f|\sign{\uu}$ almost everywhere in $\R^d$ as $\varepsilon$ tends to zero. Moreover, $\|\bm{g}_{\varepsilon}\|\le|f|$ for every $\varepsilon>0$. Hence, $\bm{g}_{\varepsilon}$ converges to $|f|\sign{\uu}$ in $L^2(\R^d,\R^m)$ as $\eps$ tends to $0$.
	
Next, we observe that, for every  $j=1,\dots,m$, the weak gradient of $({g}_{\varepsilon})_j$ is given by
\begin{align*}
\nabla ({g}_{\varepsilon})_j &= \frac{u_j}{\norm{\uu}+\eps}\D|f| + |f|\left(\frac1{\norm{\uu}+\eps}\D u_j - \frac1{(\norm{\uu}+\eps)^2}u_j\D\norm{\uu}\right).
\end{align*}
Clearly, it converges to
$\left( \frac{u_j}{\norm{\uu}}\D|f| + \frac{|f|}{\norm{\uu}}\D u_j - \frac{|f|}{\norm{\uu}^2}u_j\D\norm{\uu}\right)\chi_{\{\uu\ne \bm{0}\}}$ almost everywhere in $\R^d$,
as $\varepsilon$ tends to zero, and it is dominated by
$\norm{\D|f|} + \norm{\D u_j} + \norm{\D\norm{\uu}}$, which belongs to $L^2(\R^d)$. Hence, $\nabla({g}_{\varepsilon})_j$ converges in $L^2(\R^d,\R^d)$.\\
Summing up, we have proved that $|f|\sign{\uu}$ belongs to $H^1(\R^d,\R^m)$.
	
It remains to show that $V^{1/2}|f|\sign{\uu}$ belongs to $L^2(\R^d,\R^m).$ For this purpose, we observe that
\begin{align*}
\|V^{1/2}|f|{\rm sign}(\uu)\|_2^2 &= \int_{\R^d}\langle V|f|\sign{\uu},|f|\sign{\uu}\rangle dx \\
&= \int_{\{\uu\ne \bm{0}\}} \frac{|f|^2}{\norm{\uu}^2}\langle V{\uu},{\uu}\rangle dx\\
&\le\int_{\R^d}\langle V{\uu},{\uu}\rangle dx<\infty.
\end{align*}


To complete the proof, we need to show that estimate \eqref{eq:form:inequality} holds true. Since $V$ is nonnegative and using formula \eqref{eq:D:norma:u}, we can show that
	\begin{align*}
		&\sum_{j=1}^m\langle\D u_j,\D(|f|\sign{\uu})_j\rangle+\langle V{\uu},|f|\sign{\uu}\rangle - \langle\D\norm{\uu},\D|f|\rangle \\
		=&\bigg\langle \frac1{\norm{\uu}}\sum_{j=1}^m(u_j\D u_j)\chi_{\{\uu\ne 0\}}, \D|f|\bigg \rangle
  +\frac{|f|}{\norm{\uu}}\chi_{\{\uu\ne 0\}}\langle V{\uu},{\uu}\rangle - \langle\D\norm{\uu},\D|f|\rangle\\
  &+ \frac{|f|}{\norm{\uu}}\chi_{\{\uu\ne \bm{0}\}} \bigg( \sum_{j=1}^m\langle\D u_j, \D u_j\rangle -\bigg\langle\frac{1}{\norm{\uu}}\sum_{j=1}^m(u_j\D u_j)\chi_{\{\uu\ne \bm{0}\}}, \D\norm{\bf u}\bigg\rangle\bigg)\\
		\ge&\ \langle \D\norm{\uu}, \D|f|\rangle + \frac{|f|}{\norm{\uu}}\chi_{\{\uu\ne 0\}}\bigg (\sum_{j=1}^m\norm{\D u_j}^2 -\langle\D \norm{\uu}, \D\norm{\uu}\rangle\bigg) - \langle\D\norm{\uu},\D|f|\rangle
	\end{align*}
and the last side of the previous chain of inequalities is nonnegative due to \eqref{eq:norma:D:norma:u}.
	Integrating over $\R^d$ this relation we deduce that
	%
$a({\uu},|f|\sign{\uu}) - b(\norm{\uu},|f|)\ge0$ 
and the proof is complete.
\end{proof}

\begin{proof}[Proof of Theorem $\ref{thm:uniq_L1}$]

We fix $\uu \in C^\infty_c(\R^d,\R^m)$ and set $\bm{f}=-\Delta\uu+V\uu\in L^1(\R^d,\R^m)$. Let us notice that $-\Delta \uu+V_\varepsilon\uu=\f+\varepsilon\uu$ for every $\varepsilon>0$, and so from Corollary \ref{coro-1} we infer that $\uu=(\mathcal H+\varepsilon)^{-1}(\f+\varepsilon\uu)$ for every $\varepsilon\in(0,1)$. Hence, 
\begin{align}
\dot{\mathcal H}_{-1}\f-\uu
= & (\dot{\mathcal H}_{-1}\f-\dot{\mathcal H}^{-1}\f_n)+(\dot{\mathcal H}^{-1}\f_n-({\mathcal H}+\varepsilon)^{-1}\f_n)\notag \\
& +(({\mathcal H}+\varepsilon)^{-1}\f_n-({\mathcal H}+\varepsilon)^{-1}\f)-\varepsilon({\mathcal H}+\varepsilon)^{-1}\uu  
\label{stima-2-righe}
\end{align}
for every $\varepsilon>0$ and every $n\in\N$, where the sequence $(\f_n)_{n\in\N}\subset C^\infty_c(\R^d,\R^m)$ satisfies $\|\f-\f_n\|_1\leq n^{-1}$ for every $n\in\N$. 

Now we fix a bounded and measurable set $\Omega\subset\R^d$. From \eqref{stima-L1-loc} and \eqref{eq:stima:ueps:i}, which can be extended by a density argument to any function $\f\in L^1(\R^d,\R^m)$, we infer that 
\begin{align*}
\|\dot{\mathcal H}_{-1}\f-\dot{\mathcal H}^{-1}\f_n\|_{L^1(\Omega,\R^m)}+\|({\mathcal H}+\varepsilon)^{-1}\f-({\mathcal H}+\varepsilon)^{-1}\f_n\|_{L^1(\Omega,\R^m)}\leq \frac{2C(\Omega,d,\lambda_V)}{n}
\end{align*}
for every $n\in\N$.
Therefore, from \eqref{stima-2-righe} we deduce that
\begin{align}
\|\dot{\mathcal H}_{-1}\f-\uu\|_{L^1(\Omega,\R^m)}
\le\ & 2\frac{C(\Omega,d,\lambda_V)}{n}+
\|\dot{\mathcal H}^{-1}\f_n-({\mathcal H}+\varepsilon)^{-1}\f_n\|_{L^1(\Omega,\R^m)} \notag\\
& +\varepsilon\|({\mathcal H}+\varepsilon)^{-1}\uu\|_{L^1(\Omega,\R^m)}
\label{stima-qualcosa-1}
\end{align}
for every $n\in\N$ and $\varepsilon>0$.

Next, we observe that, using \eqref{eq:domination:result}, we can estimate
\begin{align*}
\eps \norm{(\eps+\mathcal{H})^{-1}{\uu}}
\le \eps\int_0^{+\infty}e^{-s\eps}\|e^{-s\mathcal{H}}\uu\|ds
\le\eps\int_0^{+\infty}e^{-s\eps}e^{s\Delta}\|\uu\|ds=\eps(\varepsilon-\Delta)^{-1}\|\uu\|.
	\end{align*}
Note that
\begin{align*}
(\eps(\varepsilon-\Delta)^{-1}\|\uu\|)(x)=&\varepsilon\int_0^\infty e^{-\varepsilon t}\frac{1}{(4\pi t)^{d/2}}dt\int_{\R^d}\exp\left(-\frac{|x-y|^2}{4t}\right)\|\uu(y)\|dy\\
=& \varepsilon\int_{\frac{1}{\sqrt{\varepsilon}}}^\infty e^{-\varepsilon t}\frac{1}{(4\pi t)^{d/2}}dt\int_{\R^d}\exp\left(-\frac{|x-y|^2}{4t}\right)\|\uu(y)\|dy\\
&+\varepsilon\int_0^{\frac{1}{\sqrt{\varepsilon}}} e^{-\varepsilon t}\frac{1}{(4\pi t)^{d/2}}dt\int_{\R^d}\exp\left(-\frac{|x-y|^2}{4t}\right)\|\uu(y)\|dy=:A_{\varepsilon}(x)+B_{\varepsilon}(x)
\end{align*}
for every $x\in\R^d$.
Since
	\begin{align*}
		A_{\varepsilon}(x)
		\leq \frac{\sqrt{\varepsilon^{d/2}}\|\uu\|_{1}}{(4\pi)^{d/2}}\varepsilon\int_{\frac{1}{\sqrt\varepsilon}}^\infty e^{-\varepsilon t}dt
		\leq \frac{\sqrt{\varepsilon^{d/2}}\|\uu\|_{1}}{(4\pi)^{d/2}},
	\end{align*}
it follows that $A_\varepsilon(x)$ vanishes as $\varepsilon$ tends to $0$ for every $x\in\R^d$. In particular, $A_{\varepsilon}$ vanishes in $L^1(\Omega)$, as $\varepsilon\to 0$, for every bounded measurable set $\Omega\subset\R^d$.
 
As far as $B_\varepsilon$ is concerned, if we integrate with respect to $x\in\R^d$ we get
\begin{align*}
\int_{\R^d}B_\varepsilon(x)dx
 &= \varepsilon\int_0^{\frac{1}{\sqrt{\varepsilon}}}e^{-\varepsilon t}\frac{1}{(4\pi t)^{d/2}}dt\int_{\R^d}dx\int_{\R^d}\exp\left(-\frac{|x-y|^2}{4t}\right)\|\uu(y)\|dy \\
 &= \|\uu\|_1\varepsilon\int_0^{\frac{1}{\sqrt{\varepsilon}}}e^{-\varepsilon t}dt
= (1-e^{-\sqrt{\varepsilon}})\|\uu\|_1.
\end{align*}
	This implies that $B_{\varepsilon}$ tends to $0$ in $L^1(\R^d)$ as $\varepsilon$ tends to $0$.
 
Summing up, we have proved that $\eps \norm{(\eps+\mathcal{H})^{-1}{\uu}}$ vanishes in $L^1(\Omega,\R^m)$ as $\varepsilon$ tends to $0$, for every
bounded and measurable set $\Omega\subset\R^d$.

Finally, we recall that Remark \ref{rem:3.2} shows that $\|\dot{\mathcal H}^{-1}\f_n-({\mathcal H}+\varepsilon)^{-1}\f_n\|_{L^1(\Omega,\R^m)}$ vanishes as $\varepsilon$ tends to zero for every fixed $n\in\N$. Hence, taking the limsup as $\varepsilon$ tends to $0$ in \eqref{stima-qualcosa-1} and, then, the limit as $n$ tends to $+\infty$, we conclude that $\|\uu-\dot{\mathcal H}_{-1}\f\|_{L^1(\Omega,\R^m)}=0$. The arbitrariness of $\Omega$ yields that $\uu=\dot{\mathcal H}_{-1}\f$ almost everywhere in $\R^d$.
\end{proof}

Finally, we can prove the maximal inequality in $L^1(\R^d,\R^m)$.
\begin{theo}
There exists a positive constant $c$ such that for every $\uu\in C^\infty_c(\R^d,\R^m)$ it holds that
\begin{align}
\|\Delta\uu\|_{1}+\|V\uu\|_{1}\leq c\|-\Delta \uu+V\uu\|_{1}.     
\label{coltellate}
\end{align}
\end{theo}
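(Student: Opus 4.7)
The plan is to combine the uniqueness theorem (Theorem \ref{thm:uniq_L1}) with the $L^1$-boundedness statements of Corollary \ref{thm:ext_L1}, so that the inequality becomes almost tautological once one identifies $\uu$ with $\dot{\mathcal{H}}_{-1}\f$ for $\f\coloneqq -\Delta\uu + V\uu$.

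First I would fix $\uu\in C_c^\infty(\R^d,\R^m)$ and set $\f\coloneqq -\Delta\uu+V\uu$. Since $\uu$ has compact support and $\Delta\uu\in C_c^\infty(\R^d,\R^m)$, while $V\in L^1_{\rm loc}(\R^d,\R^{m\times m})$, the function $V\uu$ lies in $L^1(\R^d,\R^m)$ by H\"older on the compact support of $\uu$, and therefore $\f\in L^1(\R^d,\R^m)$. Hence the hypothesis of Theorem \ref{thm:uniq_L1} is met, giving the identification $\uu=\dot{\mathcal{H}}_{-1}\f$.

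Next I would invoke Corollary \ref{thm:ext_L1} applied to this particular $\f$: it yields a constant $C>0$, independent of $\f$, such that
\begin{align*}
\|V\uu\|_1 = \|V\dot{\mathcal{H}}_{-1}\f\|_1 \le C\|\f\|_1, \qquad
\|\Delta\uu\|_1 = \|\Delta\dot{\mathcal{H}}_{-1}\f\|_1 \le (1+C)\|\f\|_1.
\end{align*}
Summing these two bounds and recalling that $\|\f\|_1=\|-\Delta\uu+V\uu\|_1$, one obtains \eqref{coltellate} with $c=1+2C$.

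There is no genuine obstacle left at this stage, since the real work was concentrated in the approximation machinery of Proposition \ref{prop:conv_u_eps_M}, the Kato-type domination that produced \eqref{eq:VH^-1:limitato}--\eqref{eq:DeltaH^-1:limitato}, and the uniqueness argument of Theorem \ref{thm:uniq_L1} that allows one to identify a given smooth, compactly supported $\uu$ as the image of its associated $\f$ under the extended inverse $\dot{\mathcal{H}}_{-1}$. The only small point to verify is precisely that $\f$ belongs to $L^1(\R^d,\R^m)$, which is immediate from the compact support of $\uu$ and the local integrability of $V$, and that Theorem \ref{thm:uniq_L1} is applicable with this $\f$ (i.e., that $\f$ is indeed the one produced by $-\Delta\uu+V\uu$), which is just the definition.
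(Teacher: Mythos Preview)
Your proposal is correct and follows essentially the same approach as the paper's own proof: fix $\uu\in C_c^\infty(\R^d,\R^m)$, set $\f=-\Delta\uu+V\uu\in L^1(\R^d,\R^m)$, apply Theorem~\ref{thm:uniq_L1} to identify $\uu=\dot{\mathcal H}_{-1}\f$, and then invoke Corollary~\ref{thm:ext_L1} to obtain the bounds with constant $c=1+2C$. Your version is slightly more explicit in justifying $\f\in L^1$, but the argument is identical.
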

\begin{proof}
We fix $\uu\in C^\infty_c(\R^d,\R^m)$ and set $\f=-\Delta \uu+V\uu\in L^1(\R^d,\R^m)$. From Theorem \ref{thm:uniq_L1} it follows that $\uu=\dot{\mathcal H}_{-1}\f$ and Corollary \ref{thm:ext_L1} yields the assertion with $c=\displaystyle 1+2C$.    
\end{proof}

\section{The \texorpdfstring{$L^p$}{Lp} estimates}
\label{sec:Lp}
We aim to apply the following version of a known theorem (see \cite[Theorem 3.1 \& Remark 3.6]{aus-mar}) with $w=1$. Here, $M$ is the uncentered Hardy-Littlewood maximal function over cubes of $\R^d$,
i.e.,
\begin{eqnarray*}
(Mf)(x)=\sup_{Q\ni x}\frac{1}{|Q|}\int_Q|f|dy,\qquad\;\,x\in\R^d,
\end{eqnarray*}
for every measurable function $f:\R^d\to\R$ and $Q$, as usual, denotes a cube in $\R^d$.

\begin{theo}
\label{2thm:auscher-martell}
Fix $q,s\in (1,\infty)$ and $a\in [1,+\infty)$. Then, there exists a positive constant $C=C(q,d,a,s)$ with the following property. Assume that $F,G,H_1$ and $H_2$ are nonnegative and measurable functions on $\R^d$, with $F\in L^1(\R^d)$, such that for every cube $Q$ there exist non-negative functions $G_Q$ and $H_Q$ with $F(x)\leq G_Q(x)+H_Q(x)$ for almost every $x\in Q$ and
\begin{align}
\label{2stima_ausch_marte_1}
\left(\frac{1}{|Q|}\int_Q H_Q^q(y)\,dy\right)^{1/q}
&\leq a[(M(F))(x)+(M(H_1))(x)+H_2(\bar{x})],\\[8pt]
\label{2stima_ausch-marte_2}
\frac{1}{|Q|}\int_QG_Q(y)\,dy\ &\leq G(x),
\end{align}
for all $x,\bar{x}\in Q$. Then, for all $1\le r\leq q/s$ we have
\begin{align*}
\|M(F)\|_{L^r(\R^d)}
\leq C(\|G\|_{L^r(\R^d)}+\|M(H_1)\|_{L^r(\R^d)}+\|H_2\|_{L^r(\R^d)}).
\end{align*}
\end{theo}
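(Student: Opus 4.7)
The plan is a good-$\lambda$ (Calder\'on--Zygmund) inequality; the gain over a plain $L^1$ argument comes from the higher $L^q$ integrability of the ``bad'' part $H_Q$. After a standard truncation ($F$ replaced by $F\wedge N$ restricted to a ball and passing to the limit at the end) we may assume $M(F)\in L^r(\R^d)$ a priori, so that all relevant level sets have finite measure.

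Fix $\lambda>0$ and parameters $K\gg 1$, $\gamma\in(0,1)$ to be chosen later. Consider the enlarged open level set $\Omega_\lambda=\{M(F)>\lambda\}\cup\{M(H_1)>\gamma\lambda\}$ and its Whitney decomposition $\{Q_j\}_j$: for each $j$ there exists $\bar x_j\in cQ_j\setminus\Omega_\lambda$ (for some dimensional $c>1$) satisfying simultaneously $M(F)(\bar x_j)\le\lambda$ and $M(H_1)(\bar x_j)\le\gamma\lambda$. Whenever $Q_j$ meets the good set $\{G+H_2\le\gamma\lambda\}$, fix $x_j$ in that intersection; otherwise the relevant contribution on $Q_j$ vanishes.

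On each $Q_j$ apply the hypothesis decomposition $F\le G_{cQ_j}+H_{cQ_j}$ on the enlarged cube $cQ_j$; a standard localization gives, for $y\in Q_j$,
\begin{equation*}
M(F)(y)\le C\lambda+M(G_{cQ_j}\chi_{cQ_j})(y)+M(H_{cQ_j}\chi_{cQ_j})(y),
\end{equation*}
the $C\lambda$ absorbing the contribution of cubes comparable to or larger than $Q_j$ (these behave like averages over a cube containing $\bar x_j$). By weak $(1,1)$ of $M$ together with $\operatorname{av}_{cQ_j}(G_{cQ_j})\le G(x_j)\le\gamma\lambda$, the $G$-piece contributes at most $CK^{-1}\gamma|Q_j|$ to $\{M(F)>K\lambda\}\cap Q_j$. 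By strong $(q,q)$ of $M$ together with the hypothesis
\begin{equation*}
\Big(\operatorname{av}_{cQ_j}(H_{cQ_j}^q)\Big)^{1/q}\le a\big[M(F)(\bar x_j)+M(H_1)(\bar x_j)+H_2(x_j)\big]\le a(1+2\gamma)\lambda
\end{equation*}
(taking $x=\bar x_j$, $\bar x=x_j$ in the bound \eqref{2stima_ausch_marte_1}), the $H$-piece contributes at most $Ca^qK^{-q}|Q_j|$. Summing over $j$ yields the good-$\lambda$ inequality
\begin{equation*}
\big|\{M(F)>K\lambda,\ G+M(H_1)+H_2\le\gamma\lambda\}\big|\le\big(CK^{-1}\gamma+Ca^qK^{-q}\big)\,|\Omega_\lambda|.
\end{equation*}

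Multiplying by $r\lambda^{r-1}$ and integrating over $\lambda>0$ converts the display into
\begin{equation*}
K^{-r}\|M(F)\|_r^r\le(CK^{-1}\gamma+Ca^qK^{-q})\big(\|M(F)\|_r^r+\gamma^{-r}\|M(H_1)\|_r^r\big)+C_\gamma\big(\|G\|_r^r+\|H_2\|_r^r\big).
\end{equation*}
The assumption $r\le q/s$ with $s>1$ gives a strict margin $r<q$, so that $K^{-q}\cdot K^r\to 0$ as $K\to\infty$; choosing first $\gamma$ small and then $K$ large makes the prefactor strictly smaller than $\tfrac12 K^{-r}$, and absorbing the $\|M(F)\|_r^r$ term on the left yields the claim. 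The main technical obstacle is the $M(H_1)$ contribution in the hypothesis bound on $H_Q$: it naturally lives at the variable $x$ rather than at $\bar x$, and so must be controlled at the Whitney point $\bar x_j$; this forces the Whitney decomposition to be carried out on the enlarged set $\{M(F)>\lambda\}\cup\{M(H_1)>\gamma\lambda\}$, which only costs the harmless term $\gamma^{-r}\|M(H_1)\|_r^r$ on the right.
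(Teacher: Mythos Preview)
The paper does not prove this theorem; it is stated as a version of a known result and the reader is referred to Auscher--Martell \cite[Theorem~3.1 \& Remark~3.6]{aus-mar}. Your proposal follows precisely the good-$\lambda$ strategy of that reference, and the outline is correct.

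Two minor technical points deserve attention. First, the a~priori finiteness step is slightly off: if you replace $F$ by a truncation $F'=(F\wedge N)\chi_{B_R}$, the hypothesis \eqref{2stima_ausch_marte_1} still carries $M(F)$ on the right, not $M(F')$, so the argument for $F'$ does not close directly. The usual remedy exploits that $F\in L^1$ gives $M(F)\in L^{1,\infty}$, hence the level sets $\{M(F)>\lambda\}$ have finite measure; one integrates the good-$\lambda$ inequality in $\lambda$ over $(0,\Lambda)$ (the quantity $\int_0^\Lambda r\lambda^{r-1}|\{M(F)>\lambda\}|\,d\lambda$ is finite for $r>1$ by the weak type bound) and lets $\Lambda\to\infty$ at the end.

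Second, the order in which you pick $K$ and $\gamma$ is reversed. After multiplying the prefactor $CK^{-1}\gamma+Ca^qK^{-q}$ by $K^r$ you obtain $CK^{r-1}\gamma+Ca^qK^{r-q}$; since $r>1$, the first summand diverges as $K\to\infty$ for any fixed $\gamma>0$. The correct order is to fix $K$ large first (making $K^{r-q}$ small, which uses $r<q$, i.e.\ $r\le q/s$ with $s>1$), and then choose $\gamma$ small depending on $K$ (making $K^{r-1}\gamma$ small). With that swap the absorption goes through and the remaining terms $\epsilon\gamma^{-r}\|M(H_1)\|_r^r$ and $\gamma^{-r}(\|G\|_r^r+\|H_2\|_r^r)$ are controlled by a constant depending only on $q,d,a,s$.
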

Now we state the result we are interested in.
\begin{theo}\label{thm:auscher-martell:vett}
Let $1\leq p_0<q_0\leq \infty$. Assume that $T$ is a bounded sublinear operator on $L^{p_0}(\R^d,\R^m)$ and there exist constants $\alpha_2>\alpha_1>1$ and $C>0$ such that
\begin{equation}
\left(\frac{1}{|Q|}\int_Q\norm{T\f}^{q_0}dx\right)^{1/q_0}
\leq C\bigg\{\left(\frac{1}{|\alpha_1Q|}\int_{\alpha_1Q}\norm{T\f}^{p_0}dx\right)^{1/p_0}+\norm{(S|\f|)(\bar{x})}\bigg\}
\label{2napoli-milan-0-4}
\end{equation}
for every cube $Q,$ all $\bar{x}\in Q$ and every $\f\in L^\infty(\R^d,\R^m)$ with compact support in $\R^d\setminus\alpha_2Q$, where $S$ is a vector-valued linear operator such that $S(|\bm{g}|)\leq S(|\bm{h}|)$ componentwise if $|\bm {g}|\leq |\bm{h}|$ componentwise. If $S$ is bounded on $L^p(\R^d,\R^m)$, for some
$p\in (p_0,q_0)$, then there exists a positive constant $K$ such that
\begin{align*}
\|T\f\|_{p}
\leq K\|\f\|_{p},\qquad\;\,\f\in L^\infty_c(\R^d,\R^m).
\end{align*}
\end{theo}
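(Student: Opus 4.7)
The strategy is to reduce the vector-valued statement to the scalar Theorem~\ref{2thm:auscher-martell} by setting $F=\norm{T\f}^{p_0}$ and decomposing $\f$ at each cube $Q$. Fix $\f\in L^\infty_c(\R^d,\R^m)$ and put $H_1=\norm{\f}^{p_0}$, $H_2=\norm{S|\f|}^{p_0}$; since $T$ is bounded on $L^{p_0}$ and $\f\in L^{p_0}$, we have $F\in L^1(\R^d)$. For each cube $Q$, write $\f=\f_1+\f_2$ with $\f_1=\f\chi_{\alpha_2 Q}$ and $\f_2=\f\chi_{\R^d\setminus\alpha_2 Q}$; sublinearity of $T$ combined with $(a+b)^{p_0}\le 2^{p_0-1}(a^{p_0}+b^{p_0})$ gives
\begin{equation*}
F(x)\le 2^{p_0-1}\norm{T\f_1(x)}^{p_0}+2^{p_0-1}\norm{T\f_2(x)}^{p_0}\eqqcolon G_Q(x)+H_Q(x).
\end{equation*}

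To estimate $|Q|^{-1}\int_Q G_Q$, the $L^{p_0}$-boundedness of $T$ yields $|Q|^{-1}\int_Q G_Q\le C|Q|^{-1}\int_{\alpha_2 Q}\norm{\f}^{p_0}\le C'M(H_1)(x)$ for every $x\in Q$, so one takes $G=C'M(H_1)$. For $H_Q$ the key step is to apply the hypothesis \eqref{2napoli-milan-0-4} to $\f_2$ (which indeed has compact support in $\R^d\setminus\alpha_2 Q$). Monotonicity of $S$, applied to the componentwise inequality $|\f_2|\le|\f|$, replaces $\norm{(S|\f_2|)(\bar x)}$ by $\norm{(S|\f|)(\bar x)}=H_2(\bar x)^{1/p_0}$, while $\norm{T\f_2}^{p_0}\le 2^{p_0-1}(\norm{T\f}^{p_0}+\norm{T\f_1}^{p_0})$ together with the $L^{p_0}$-boundedness of $T$ applied to $\f_1$ (and the volume ratio $|\alpha_2 Q|/|\alpha_1 Q|=(\alpha_2/\alpha_1)^d$) bounds the local $L^{p_0}$-average of $T\f_2$ over $\alpha_1 Q$ by $M(F)(x)^{1/p_0}+M(H_1)(x)^{1/p_0}$. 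Raising to the power $p_0$ produces
\begin{equation*}
\bigg(\frac{1}{|Q|}\int_Q H_Q^{q_0/p_0}\,dy\bigg)^{p_0/q_0}\le a\bigl[M(F)(x)+M(H_1)(x)+H_2(\bar x)\bigr],\qquad x,\bar x\in Q,
\end{equation*}
which is exactly the hypothesis of the scalar theorem with $q=q_0/p_0>1$.

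With these data in hand, pick any $s\in(1,q_0/p)$ (nonempty because $p<q_0$) and set $r=p/p_0$, so that $1<r\le q/s$. Theorem~\ref{2thm:auscher-martell} then yields
\begin{equation*}
\norm{M(F)}_{L^{p/p_0}(\R^d)}\le C\bigl(\norm{G}_{L^{p/p_0}}+\norm{M(H_1)}_{L^{p/p_0}}+\norm{H_2}_{L^{p/p_0}}\bigr).
\end{equation*}
Since $p/p_0>1$, the Hardy--Littlewood maximal operator is bounded on $L^{p/p_0}(\R^d)$, which controls the first two terms by $\norm{H_1}_{L^{p/p_0}}=\norm{\f}_p^{p_0}$; the $L^p$-boundedness of $S$ controls the third term by $\norm{S|\f|}_p^{p_0}\lesssim \norm{\f}_p^{p_0}$. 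Finally, $F\le M(F)$ almost everywhere gives $\norm{T\f}_p^{p_0}=\norm{F}_{L^{p/p_0}}\le \norm{M(F)}_{L^{p/p_0}}\lesssim \norm{\f}_p^{p_0}$, which is the claim.

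The main technical point is the $H_Q$-estimate: the local hypothesis is formulated in terms of $\norm{T\f}$, not $\norm{T\f_2}$, so applying it to $\f_2$ creates an $L^{p_0}$-average of $T\f_2$ on $\alpha_1 Q$ that must be split back into an $M(F)(x)$-piece (coming from $T\f$) and an $M(H_1)(x)$-piece (coming from $T\f_1$, after invoking the $L^{p_0}$-boundedness of $T$ and the volume comparison between $\alpha_1 Q$ and $\alpha_2 Q$). Once this balancing is performed, the rest of the argument is just a routine application of the scalar Auscher--Martell theorem and the standard boundedness properties of $M$ on $L^{p/p_0}$ and of $S$ on $L^p$.
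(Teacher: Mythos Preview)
Your proof is correct and follows essentially the same approach as the paper's own argument: the same decomposition $\f=\f\chi_{\alpha_2 Q}+\f\chi_{\R^d\setminus\alpha_2 Q}$, the same choices $F=\norm{T\f}^{p_0}$, $G_Q=2^{p_0-1}\norm{T(\chi_{\alpha_2 Q}\f)}^{p_0}$, $H_Q=2^{p_0-1}\norm{T((1-\chi_{\alpha_2 Q})\f)}^{p_0}$, $H_1=\norm{\f}^{p_0}$, $H_2=\norm{S|\f|}^{p_0}$, and the same reduction to Theorem~\ref{2thm:auscher-martell} with $q=q_0/p_0$ and $r=p/p_0$. The only cosmetic difference is that the paper takes the specific value $s=q_0/p$ (so that $r=q/s$ with equality), whereas you allow any $s\in(1,q_0/p)$; both choices are valid.
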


\begin{proof}
Along the proof, $c$ denotes a positive constant greater than $1$ which may vary from line to line.
	 
We fix $\f\in L^\infty_{\text{c}}(\R^d,\R^m)$, a cube $Q$ and $x\in Q$. Since $T$ is bounded on $L^{p_0}(\R^d,\R^m)$ we obtain
\begin{align*}
\left(\frac{1}{|\alpha_1Q|}\int_{\alpha_1Q}\norm{T(\chi_{\alpha_2Q}\f)}^{p_0}dy\right)^{1/p_0}
&\leq c\left(\frac{1}{|\alpha_1Q|}\int_{\R^d}\norm{\chi_{\alpha_2Q}\f}^{p_0}dy\right)^{1/p_0}\\
&= c\left(\frac{1}{|\alpha_1Q|}\int_{\alpha_2Q}\norm{\f}^{p_0}dy\right)^{1/p_0}\\
&\leq c\,(M(\norm{\f}^{p_0})(x))^{{1}/{p_0}},
\end{align*}
and since $\alpha_1>1$, one has
\begin{equation}\label{2stima_Lp_1}
\left(\frac{1}{|Q|}\int_{Q}\norm{T(\chi_{\alpha_2Q}\f)}^{p_0}dy\right)^{1/p_0}\leq c\,(M(\norm{\f}^{p_0})(x))^{\frac{1}{p_0}}.
\end{equation}
Further, since the support of $(1-\chi_{\alpha_2Q})\f$ is contained in $\R^d\setminus\alpha_2 Q$, from \eqref{2napoli-milan-0-4} it follows that
\begin{align}
\left(\frac{1}{|Q|}\int_Q\|T((1-\chi_{\alpha_2Q})\f)\|^{q_0}dy\right)^{1/q_0}
\leq & C\Bigg\{\bigg(\frac{1}{|\alpha_1Q|}\int_{\alpha_1Q}\|T((1-\chi_{\alpha_2Q})\f)\|^{p_0}dy\bigg)^{1/p_0} \notag \\
& +\norm{(S|(1-\chi_{\alpha_2Q})\f|)(\bar{x})}\Bigg\}
\label{2stima_Lp_2}
\end{align}
for all $\bar{x}\in Q.$
		
The assumptions on $S$ imply that 
$S(|(1-\chi_{\alpha_2Q})\f|)\le S(|\f|)$, from which it follows that
\begin{equation}\label{eq:stima_con_S}
\norm{S(|(1-\chi_{\alpha_2Q})\f|)}\le\norm{S(|\f|)}.
\end{equation}
The sublinearity of $T$ implies that $\|T((1-\chi_{\alpha_2Q})\f)\|\leq \|T\f\|+\|T\chi_{\alpha_2Q}\f\|$, and so from \eqref{2stima_Lp_1}, \eqref{2stima_Lp_2} and \eqref{eq:stima_con_S} we infer that
\begin{align}
\label{2stima_Lp_3}
\left(\frac{1}{|Q|}\int_Q\|T((1-\chi_{\alpha_2Q})\f)\|^{q_0}dy\right)^{\frac{1}{q_0}}\!\!\!
\leq c\left[(M(\|T\f\|^{p_0})(x))^{\frac{1}{p_0}}+(M(\|\f\|^{p_0})(x))^{\frac{1}{p_0}} + \norm{(S|\f|)(\bar{x})}\right].
\end{align}
Let us fix $p\in(p_0,q_0)$.
We aim to apply Theorem \ref{2thm:auscher-martell} with $F=\|T\f\|^{p_0}\in L^1(\R^d)$, $G=2^{p_0-1}c^{p_0}M(\|\f\|^{p_0}),$ $H_1=\|\f\|^{p_0}$ and $H_2 =\norm{(S|\f|)}^{p_0}.$ 

From the definition of $F$ it follows that
\begin{align*}
|F(x)|\leq 2^{p_0-1}\|(T(\chi_{\alpha_2Q}\f))(x)\|^{p_0}+2^{p_0-1}\|(T((1-\chi_{\alpha_2Q})\f))(x)\|^{p_0}=:G_Q(x)+H_Q(x)
\end{align*}
for almost every $x\in Q$. Arguing as for  \eqref{2stima_Lp_3} with $q=q_0/p_0>1$ we get
\begin{align*}
\left(\frac1{|Q|}\int_{Q}H_Q^{q}(y)\,dy\right)^{\frac1{q}} &= 2^{p_0-1}\left\{\left(\frac1{|Q|}\int_{Q}\|T((1-\chi_{\alpha_2Q})\f)\|^{q_0}dy\right)^{\frac1{q_0}}\right\}^{p_0}\\
&\le c\left[(M(\|T\f\|^{p_0})(x))^{1/p_0}+(M(\|\f\|^{p_0})(x))^{1/p_0} + \norm{(S|\f|)(\bar{x})}\right]^{p_0}\\
&\le c\left[(M(\|T\f\|^{p_0}))(x)+(M(\|\f\|^{p_0}))(x) + \norm{(S|\f|)(\bar{x})}^{p_0}\right]\\
&= c\left[(M(F))(x) + (M(H_1))(x) + H_2(\bar{x})\right]
\end{align*}
and then \eqref{2stima_ausch_marte_1} is verified, while from \eqref{2stima_Lp_1} we infer
\begin{align*}
\frac1{|Q|}\int_{Q}G_Q(y)dy = \frac{2^{p_0-1}}{|Q|}\int_{Q}\|T(\chi_{\alpha_2Q}\f)\|^{p_0}dy\le 2^{p_0-1}c^{p_0}(M(\|\f\|^{p_0}))(x) = G(x),
\end{align*}
i.e., \eqref{2stima_ausch-marte_2} holds. Theorem \ref{2thm:auscher-martell}, with $r=p/p_0$, $q=q_0/p_0$ and $s=q_0/p$, gives that $\exists\ \widetilde{C}>0$ such that
\begin{align}
\notag 
\|M(F)\|_{L^{p/p_0}(\R^d)} &\leq\widetilde{C}\left[\|G\|_{L^{p/p_0}(\R^d)} + \|M(H_1)\|_{L^{p/p_0}(\R^d)}+\|H_2\|_{L^{p/p_0}(\R^d)}\right] \\
&=\widetilde C(2^{p_0-1}c^{p_0}+1)\|M(\|\f\|^{p_0})\|_{L^{p/p_0}(\R^d)} + \widetilde{C} \norm{S|\f|}^{p_0}_p.
\label{app_aus_martell}
\end{align}
Applying the Lebesgue differentiation theorem, we deduce that
\begin{align}
\label{2prop_funz_max}
\|T\f\|_p^{p_0}
= \|F\|_{L^{p/p_0}(\R^d)}\leq \|M(F)\|_{L^{p/p_0}(\R^d)}.
\end{align}
Combining \eqref{app_aus_martell} with \eqref{2prop_funz_max}, the boundedness of the maximal function $M$ and of the operator $S$ on $L^p(\R^d,\mathbb R^m)$ implies that
\begin{align*}
\|T\f\|_p^{p_0}&\le \widetilde C(2^{p_0-1}c^{p_0}+1)\|M(\|\f\|^{p_0})\|_{L^{p/p_0}(\R^d)} + \widetilde C \norm{S|{\bm f|}}^{p_0}_p
\le C_*\norm{\f}_p^{p_0}
\end{align*}
for some positive constant $C_*$ and the assertion follows.
\end{proof}
	
From now on, we denote by $\Omega$ an open set of $\R^d$.

\begin{lemma}\label{lem:subharmonic}
Assume Hypotheses $\ref{hyp-1}$ and $\ref{hyp-2}$. Let $\uu$ be a weak solution of $-\Delta\uu+V\uu =\bm{0}$ in $\Omega$, i.e., $\uu\in L^2_{\rm loc}(\Omega,\R^m)$ with $V^{\frac12}\uu$,  $D_j\uu\in L^2_{\rm loc}(\Omega,\R^m)$ $(j=1,\ldots,d)$  and the equation holds in the sense of distributions. Then, the following properties are satisfied:
\begin{enumerate}[\rm (i)]
\item 
the function $\|\uu\|^2$ is subharmonic on $\Omega$, i.e.,  $\Delta\|\uu\|^2\ge0$ in $\Omega$ in the sense of distributions;
\item 
if $\lambda_{ V}\in B_r$ for some $ r\in(1,\infty)\cup\{\infty\}$, then for every $1<\mu\le2$ and every cube $Q\subset\R^d$ with $\overline{2Q}\subset\Omega$, there exists a non-negative constant $\widetilde C$ such that
\begin{align*}
\left(\operatorname{av}_Q\|V\uu\|^r\right)^{\frac1r}\le\widetilde{C}\operatorname{av}_{\mu Q}(\|V\uu\|).
\end{align*}     
\end{enumerate}
\end{lemma}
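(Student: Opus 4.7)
The two assertions can be proved essentially independently, with (i) providing the key subharmonicity that drives the Moser-type bound in (ii).

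For (i), I would argue directly from the weak formulation of the equation. Since $\uu\in L^2_{\rm loc}(\Omega,\R^m)$, $D_j\uu,V^{1/2}\uu\in L^2_{\rm loc}$, and $-\Delta\uu+V\uu=\bm 0$ holds in $\mathcal{D}'(\Omega,\R^m)$, for any non-negative $\varphi\in C^\infty_c(\Omega)$ the function $\uu\varphi$ is a legitimate test function (compactly supported in $\Omega$, $H^1$, with $V^{1/2}(\uu\varphi)\in L^2$). Testing the equation against $\uu\varphi$ and integrating by parts on the Laplacian term yields
\begin{align*}
\tfrac{1}{2}\int_{\Omega}\|\uu\|^2\,\Delta\varphi\,dx=\int_{\Omega}\|\nabla\uu\|^2\varphi\,dx+\int_{\Omega}\langle V\uu,\uu\rangle\varphi\,dx\ge 0,
\end{align*}
where the non-negativity follows from $\varphi\ge 0$ and $V\ge 0$ in the sense of forms (Hypothesis \ref{hyp-1}). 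This says exactly $\Delta\|\uu\|^2\ge 0$ in $\mathcal{D}'(\Omega)$.

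For (ii), I would reduce the reverse H\"older estimate for $\|V\uu\|$ to an estimate involving $\lambda_V\|\uu\|$ and then invoke subharmonicity and the hypothesis $\lambda_V\in B_r$. Explicitly: Hypothesis \ref{hyp-2} gives $\lambda_V\|\uu\|\le\|V\uu\|\le C\lambda_V\|\uu\|$, so it is enough to prove the reverse H\"older estimate for $\lambda_V\|\uu\|$. First, pulling $\|\uu\|$ out in $L^\infty$ and applying $\lambda_V\in B_r$,
\begin{align*}
\left(\operatorname{av}_Q(\lambda_V\|\uu\|)^r\right)^{1/r}\le\|\uu\|_{L^\infty(Q)}\left(\operatorname{av}_Q\lambda_V^r\right)^{1/r}\le C\,\|\uu\|_{L^\infty(Q)}\operatorname{av}_Q\lambda_V.
\end{align*}
Next, from part (i) the non-negative function $\|\uu\|^2$ is subharmonic on $\Omega$, so Kato's inequality (the scalar version used in Lemma \ref{lem:kato_vett}) applied to $\uu$ in the equation $\Delta\uu=V\uu$ yields $\Delta\|\uu\|\ge 0$ as well; Moser's mean value inequality for non-negative subharmonic functions then gives, for any fixed $\nu\in(1,\mu)$,
\begin{align*}
\|\uu\|_{L^\infty(Q)}\le C_\nu\operatorname{av}_{\nu Q}\|\uu\|.
\end{align*}

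Combining the two displays produces $(\operatorname{av}_Q\|V\uu\|^r)^{1/r}\le C\,\operatorname{av}_{\nu Q}\|\uu\|\cdot\operatorname{av}_Q\lambda_V$, and the last step—which I expect to be the main obstacle—is to absorb this product into $\operatorname{av}_{\mu Q}(\lambda_V\|\uu\|)\le\operatorname{av}_{\mu Q}\|V\uu\|$. The natural route uses the $A_\infty$ character of $\lambda_V$ (a consequence of $\lambda_V\in B_r$) together with the Caccioppoli-type identity obtained by testing the equation against $\uu\phi^2$,
\begin{align*}
\int_Q\left(\tfrac{1}{2}\|\nabla\uu\|^2+\langle V\uu,\uu\rangle\right)dx\le\frac{C}{R^2}\int_{\mu Q}\|\uu\|^2 dx,
\end{align*}
and the Fefferman-Phong estimate of Proposition 3.1 (or, equivalently, the integration by parts $\int_Q V\uu=\int_{\partial Q}\partial_\nu\uu$ combined with interior gradient bounds $\|\nabla\uu\|_{L^\infty(Q)}\lesssim R^{-1}\|\uu\|_{L^\infty(\mu Q)}$ coming from subharmonicity) to show $\operatorname{av}_Q\lambda_V\cdot\operatorname{av}_{\nu Q}\|\uu\|\le \widetilde C\operatorname{av}_{\mu Q}(\lambda_V\|\uu\|)$. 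Once this quantitative link between the product of averages and the cross-average is established, the desired estimate $(\operatorname{av}_Q\|V\uu\|^r)^{1/r}\le\widetilde C\operatorname{av}_{\mu Q}\|V\uu\|$ follows immediately.
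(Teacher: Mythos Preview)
Your argument for (i) is correct and matches the paper's: both test the equation against $\uu\varphi$ (componentwise in the paper, globally in your write-up) and arrive at $\Delta\|\uu\|^2=2\big(\langle V\uu,\uu\rangle+\sum_i\|\nabla u_i\|^2\big)\ge 0$ in $\mathcal D'(\Omega)$.

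For (ii) there is a genuine gap. Your reduction to $\lambda_V\|\uu\|$ via Hypothesis~\ref{hyp-2} is correct, and so are the first three displayed steps. But the step you yourself label ``the main obstacle'' --- passing from the product $\operatorname{av}_Q\lambda_V\cdot\operatorname{av}_{\nu Q}\|\uu\|$ to the cross-average $\operatorname{av}_{\mu Q}(\lambda_V\|\uu\|)$ --- is not established, and the tools you propose do not close it. The Caccioppoli estimate bounds $\int_Q\lambda_V\|\uu\|^2$ \emph{from above} by $CR^{-2}\int_{\mu Q}\|\uu\|^2$, which points the wrong way; the Fefferman--Phong inequality \eqref{equation2.1} gives a lower bound involving $\operatorname{av}_Q\min\{CR^{-2},\lambda_V\}$, not $\operatorname{av}_Q\lambda_V$; and the boundary identity you mention controls $\int_Q V\uu$ by boundary data of $\nabla\uu$, not by $\operatorname{av}_{\mu Q}(\lambda_V\|\uu\|)$. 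In general an inequality of the form $(\operatorname{av}\,\omega)(\operatorname{av}\,g)\lesssim\operatorname{av}(\omega g)$ can fail badly without extra structure tying $\omega$ and $g$ together.

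The paper bypasses this difficulty entirely. It invokes \cite[Corollary~5.3]{auscher-benali:2007}, which states that if $f\ge 0$ is subharmonic on a neighbourhood of $\overline{2Q}$ and $\omega\in B_r$, then $(\operatorname{av}_Q(\omega f^s)^r)^{1/r}\le M\,\operatorname{av}_{\mu Q}(\omega f^s)$ for every $s>0$. Applying this with $f=\|\uu\|^2$ (subharmonic by part (i)), $s=\tfrac12$ and $\omega=\lambda_V$ gives the reverse H\"older estimate for $\lambda_V\|\uu\|$ in one line; Hypothesis~\ref{hyp-2} then converts it to the claimed estimate for $\|V\uu\|$. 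The substance behind that corollary is a \emph{weighted} mean-value inequality for non-negative subharmonic functions against $A_\infty$ weights, roughly $\sup_Q f\le C\,\omega(\mu Q)^{-1}\int_{\mu Q}f\omega$, and that is precisely the ingredient your step~5 is groping for. If you want a self-contained argument, prove that weighted sup-bound first; plugging it into your step~2 in place of the unweighted $\sup_Q\|\uu\|\le C\operatorname{av}_{\nu Q}\|\uu\|$ finishes the proof immediately, without ever having to compare a product of averages with an average of a product.
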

\begin{proof}
(i) If $\uu$ is a weak solution of $-\Delta\bm{v} + V\bm{v} = \bm{0}$,  then $\Delta u_i=(V\uu)_i$ in the sense of distributions, for all $i=1,\dots,m.$ For all $\varphi\in C_c^{\infty}(\Omega)$ we get
\begin{align*}
\int_{\Omega}\Delta(|u_i|^2)\varphi dx &= -\int_{\Omega}\D (|u_i|^2)\D\varphi dx= -2\int_{\Omega}u_i\D u_i \D\varphi dx\\
&=-2\int_{\Omega}\D u_i\D(u_i\varphi)dx +2\int_{\Omega}\D u_i\,\D u_i\,\varphi dx\\
&=2\int_{\Omega}\Delta u_i(u_i\varphi)dx +2\int_{\Omega}|\D u_i|^2\varphi dx\\
&=2\int_{\Omega} (Vu)_i\,u_i\,\varphi +2\int_{\Omega}|\D u_i|^2\varphi dx.
\end{align*}
Summing over $i=1,\dots,m$ we infer that
\begin{align*}			 
\int_{\Omega}\Delta(\|\uu\|^2)\,\varphi dx = 2\int_{\Omega}\langle V\uu,\uu\rangle\,\varphi dx + 2\int_{\Omega}\sum_{i=1}^m|\D u_i|^2\varphi dx.  
\end{align*}
In conclusion, since $V$ is non-negative definite, we get
\begin{align*}
\Delta\|\uu\|^2 =2\left(\langle V\uu,\uu\rangle + \sum_{i=1}^m|\D u_i|^2\right) \ge 0,
\end{align*}
and $\|\uu\|\in L^2_{\rm loc}(\Omega)$. This proves that $\|\uu\|^2$ is subharmonic on $\Omega$. 
 
(ii) Let $Q$ be a cube of $\R^d$ such that $\overline{2Q}\subset\Omega.$
First, we observe that $\|\uu\|^2$ is a subharmonic function on a neighborhood of $\overline{2Q}$ by property (i). Since $\lambda_V\in B_r$, we can apply \cite[Corollary 5.3]{auscher-benali:2007} with weight $\omega=\lambda_V$, $s=\frac{1}{2}$ and $\ f=\|\uu\|^2$ as subharmonic function. It follows that for all $\mu\in (1,2]$ there exists $M\ge0$ such that
\begin{equation}\label{eq:risultato:scalare}
\left(\operatorname{av}_Q(\lambda_V\|\uu\|)^r\right)^{\frac1r}\le M\operatorname{av}_{\mu Q}(\lambda_V\|\uu\|).
\end{equation}
Since $V$ is symmetric and nonnegative, it holds that $\|V\|=\max_{\|\bm{v}\|=1}\langle V\bm{v},\bm{v}\rangle$. Hence,
\begin{equation}\label{eq:medie_V_L}
\left(\operatorname{av}_Q(\|V\uu\|)^r\right)^{\frac1r}\le C\left(\operatorname{av}_Q(\lambda_V\|\uu\|)^r\right)^{\frac1r}, 
\end{equation}
where we have taken into account that the eigenvalues of $V$ are comparable, and $C$ is the constant that appears in the Hypothesis \ref{hyp-2}. Finally, we observe that
\begin{equation}
\label{eq:medie_autovalori}
\operatorname{av}_{\mu Q}(\lambda_V\|\uu\|)\le \operatorname{av}_{\mu Q}(\|V\uu\|)
\end{equation}
for all $\mu\in (1,2]$.
Combining \eqref{eq:risultato:scalare}, \eqref{eq:medie_V_L} and \eqref{eq:medie_autovalori},
we obtain the  assertion with $\widetilde{C} =CM$.
\end{proof}
	
\begin{prop}\label{thm:ext_Lp}
Let $V$ be a matrix-valued operator satisfying Hypotheses $\ref{hyp-1}$ and $\ref{hyp-2}$. Further, suppose that $\lambda_{ V}\in B_q$ for some $ q\in(1,\infty)\cup\{\infty\}$. Then, $V\dot{\mathcal H}_{-1}$ and $\Delta\dot{\mathcal H}_{-1}$, defined on $L^1(\R^d,\R^m)$ by Corollary $\ref{thm:ext_L1}$, extend to bounded operators on $L^p(\R^d,\R^m)$ for $1<p<r$, where $r$ is a suitable real number larger than $q$, if $q<\infty$, and $r=\infty$, otherwise.
\end{prop}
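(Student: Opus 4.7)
The plan is to apply Theorem \ref{thm:auscher-martell:vett} to the linear (hence sublinear) operator $T = V\dot{\mathcal{H}}_{-1}$ with exponents $p_0 = 1$ and $q_0 = q$, choosing for instance $\alpha_1 = 2$, $\alpha_2 = 3$, and the trivial operator $S \equiv 0$ (which is bounded on every $L^p(\R^d,\R^m)$ and respects componentwise monotonicity). The $L^{p_0}$-boundedness hypothesis is immediate from Corollary \ref{thm:ext_L1}, so the whole task reduces to verifying the localized reverse H\"older estimate \eqref{2napoli-milan-0-4}.

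To verify \eqref{2napoli-milan-0-4}, fix a cube $Q$ and any $\f \in L^\infty(\R^d,\R^m)$ with compact support contained in $\R^d \setminus 3Q$, and set $\uu = \dot{\mathcal{H}}_{-1}\f$. Since $L^\infty_c(\R^d,\R^m)\subset L^2_c(\R^d,\R^m)\subset \dot{\mathcal{V}}'$, the function $\uu$ belongs to $\dot{\mathcal{V}}$, so $\nabla\uu$ and $V^{1/2}\uu$ lie in $L^2(\R^d,\R^m)$, while the Fefferman--Phong inequality \eqref{equation2.1} gives the needed local $L^2$-control of $\uu$. Because $\f$ vanishes on $3Q \supset \overline{2Q}$, the function $\uu$ is a weak solution (in the sense of Lemma \ref{lem:subharmonic}) of $-\Delta\uu + V\uu = \bm{0}$ on the open set $3Q$. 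Applying Lemma \ref{lem:subharmonic}(ii) with $\mu = 2 = \alpha_1$ and exponent $r = q$ yields
\begin{align*}
\left(\operatorname{av}_Q \|V\uu\|^q\right)^{1/q} \le \widetilde C \operatorname{av}_{\alpha_1 Q}\|V\uu\|,
\end{align*}
which is precisely \eqref{2napoli-milan-0-4} for $T=V\dot{\mathcal{H}}_{-1}$ with $p_0=1$, $q_0=q$ and $S\equiv 0$. Theorem \ref{thm:auscher-martell:vett} then gives $\|V\dot{\mathcal{H}}_{-1}\f\|_p \le K \|\f\|_p$ for every $p\in(1,q)$ and every $\f\in L^\infty_c(\R^d,\R^m)$, and by density $V\dot{\mathcal{H}}_{-1}$ extends to a bounded operator on $L^p(\R^d,\R^m)$ in that range. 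To reach some $r>q$ one invokes the classical self-improving (Gehring-type) property of the reverse H\"older class, which provides $\varepsilon>0$ with $\lambda_V \in B_{q+\varepsilon}$; repeating the argument with $q$ replaced by $q+\varepsilon$ yields the range $p\in(1,r)$ for $r=q+\varepsilon$. When $q=\infty$ this step is vacuous and the argument directly covers all $p\in(1,\infty)$.

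Finally, the bound on $\Delta \dot{\mathcal{H}}_{-1}$ is obtained for free from the identity $\Delta \dot{\mathcal{H}}_{-1}\f = V\dot{\mathcal{H}}_{-1}\f - \f$, valid in the sense of distributions since $-\Delta\uu + V\uu = \f$ (this is the defining property of $\dot{\mathcal{H}}$, already exploited in Corollary \ref{thm:ext_L1}). Combining this with the trivial $L^p$-bound on the identity and the just-proved bound on $V\dot{\mathcal{H}}_{-1}$ gives the desired estimate. The genuine content of the argument is thus entirely packaged in Lemma \ref{lem:subharmonic}(ii) and Theorem \ref{thm:auscher-martell:vett}; the main potential pitfall is the verification that $\uu = \dot{\mathcal{H}}_{-1}\f$ qualifies as a weak solution on $3Q$ in the precise sense required by Lemma \ref{lem:subharmonic}, which is why the Fefferman--Phong inequality must be brought to bear at that point.
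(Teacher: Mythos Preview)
Your proposal is correct and follows essentially the same route as the paper's proof: invoke Corollary \ref{thm:ext_L1} for the $L^1$-bound, use Lemma \ref{lem:subharmonic}(ii) on the homogeneous solution $\uu=\dot{\mathcal H}_{-1}\f$ away from the support of $\f$ to obtain the localized reverse H\"older inequality, apply Theorem \ref{thm:auscher-martell:vett} with $S=0$, and then recover $\Delta\dot{\mathcal H}_{-1}$ by difference. The only cosmetic differences are that the paper chooses $\alpha_2=4$ (with $\Omega=4Q$) rather than your $\alpha_2=3$, and that the paper applies Gehring's self-improvement \emph{before} invoking Lemma \ref{lem:subharmonic}(ii), taking $q_0=r>q$ in a single pass, whereas you first run the argument with $q_0=q$ and then repeat with $q+\varepsilon$; both orderings are valid.
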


\begin{proof}
Let us start with $V\dot{\mathcal H}_{-1}$. By Corollary \ref{thm:ext_L1} it is a linear bounded operator on $L^1(\R^d,\R^m).$ Since $\lambda_{ V}\in B_q$, from the self-improvement of the reverse H\"{o}lder class (see  \cite{gehring}), it follows that if $q$ is real, then $\lambda_{ V}\in B_r$ for some real number $r$ larger than $q$.
		
Fix now a cube $ Q\subset\R^d$ and $\f\in L^{\infty}(\R^d,\R^m)  $ with compact support contained in $ \R^d\setminus 4Q. $ The function $ \uu = \dot{\mathcal H}_{-1}\f\in\dot{\mathcal{V}} $ is well defined and it is a weak solution of $-\Delta\uu + V\uu = \bm{0}$ in $4Q$. By Lemma \ref{lem:subharmonic}(ii), applied with $\Omega=4Q$, we have for $\mu=2$ that
\begin{align*}
\left(\operatorname{av}_Q\|V\dot{\mathcal H}_{-1}\f\|^r\right)^{\frac1r}\le\widetilde{C}\operatorname{av}_{2 Q}\|V\dot{\mathcal H}_{-1}\f\|.
\end{align*} 
Then, we get \eqref{2napoli-milan-0-4} with $ S=0,\,p_0=1,\ q_0=r,\ \alpha_1=2,\ \alpha_2=4,$ and $T= V\dot{\mathcal H}_{-1}$. We can apply Theorem \ref{thm:auscher-martell:vett} and obtain that for all $1<p<r$ there exists $K>0$ such that
\begin{align*}
\|V\dot{\mathcal H}_{-1}\f\|_p \le K\|\f\|_p,\qquad\;\,\bm{f}\in L^{\infty}_c(\R^d,\R^m).
\end{align*}
This implies that $V\dot{\mathcal H}_{-1}$ extends to a linear bounded operator on $L^p(\R^d,\R^m) $ and, by difference, also $ \Delta\dot{\mathcal H}_{-1}$. Indeed, $\|\Delta\dot{\mathcal H}_{-1}\f\|_p\le\|V\dot{\mathcal H}_{-1}\f\|_p+\|\f\|_p\le (K+1)\|\f\|_p$.
\end{proof}

Now, we are ready to prove $L^p$-maximal estimates.

\begin{theo}
\label{thm:maximal_ineq}
Let Hypotheses $\ref{hyp-1}$ and $\ref{hyp-2}$ be satisfied. Further, suppose that $\lambda_V\in B_q$ for some $ q\in(1,\infty)\cup\{\infty\}$. If $q\in (1,\infty)$, then there exists $r$, larger than $q$, and depending only on $V$, such that
\begin{equation}\label{eq:maximal_ineq:Lp}
\norm{\Delta \uu}_p + \norm{V\uu}_p \le C_p\norm{(-\Delta+V) \uu}_p,\qquad\;\,\uu\in C^{\infty}_c(\R^d,\R^m),
\end{equation}
for every $p\in (1,r)$ and some positive constant $C_p$, independent of $\uu$. If $q=\infty$, then estimate \eqref{eq:maximal_ineq:Lp} holds true for every $p\in (1,\infty)$.
\end{theo}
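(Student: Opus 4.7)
The plan is to combine Theorem \ref{thm:uniq_L1}, which identifies any $\uu \in C^\infty_c(\R^d,\R^m)$ with $\dot{\mathcal H}_{-1}\f$ for $\f = -\Delta\uu + V\uu$, with the $L^p$-boundedness of $V\dot{\mathcal H}_{-1}$ and $\Delta\dot{\mathcal H}_{-1}$ provided by Proposition \ref{thm:ext_Lp}. Once these two ingredients are in hand, the estimate is essentially immediate.

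First I would fix $\uu \in C^\infty_c(\R^d,\R^m)$ and set $\f \coloneqq -\Delta\uu + V\uu$. Before invoking the abstract extension I need to verify that $\f \in L^p(\R^d,\R^m)$ for every $p$ in the relevant range. Since $\uu$ is compactly supported, $\Delta\uu$ is smooth and compactly supported, hence in every $L^p$; and $V\uu$ is compactly supported as well. The assumption $\lambda_V \in B_q$ and Hypothesis \ref{hyp-2} give $\|V\| \le C\lambda_V \in L^q_{\rm loc}(\R^d)$, so $V\uu \in L^q(\R^d,\R^m)$. If $q < \infty$, by the self-improvement property of reverse Hölder classes (Gehring's lemma), we may actually take $r > q$ so that $\lambda_V \in B_r$, hence $V\uu \in L^r(\R^d,\R^m)$, and in particular $\f \in L^p(\R^d,\R^m)$ for every $p \in (1,r)$. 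If $q = \infty$, then $\lambda_V \in L^\infty_{\rm loc}(\R^d)$ so that $\f \in L^p(\R^d,\R^m)$ for every $p \in (1,\infty)$.

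Next I would apply Theorem \ref{thm:uniq_L1} to write $\uu = \dot{\mathcal H}_{-1}\f$, so that
\begin{align*}
V\uu = (V\dot{\mathcal H}_{-1})\f, \qquad \Delta\uu = (\Delta\dot{\mathcal H}_{-1})\f.
\end{align*}
Taking $L^p$-norms and using the bounds of Proposition \ref{thm:ext_Lp} on the valid range of $p$, I would obtain
\begin{align*}
\|V\uu\|_p + \|\Delta\uu\|_p \le C_p \|\f\|_p = C_p \|-\Delta\uu + V\uu\|_p,
\end{align*}
for $p \in (1,r)$ (respectively $p \in (1,\infty)$ if $q = \infty$), with $C_p$ independent of $\uu$. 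Because all the heavy lifting has been done in Sections \ref{sec:L1}--\ref{sec:Lp}, I do not expect any substantive obstacle here; the only point to be careful about is that the $L^p$-extension of $V\dot{\mathcal H}_{-1}$ and $\Delta\dot{\mathcal H}_{-1}$ agrees with the original definition when applied to $\f \in L^1 \cap L^p$, which is guaranteed by the density of $L^\infty_c(\R^d,\R^m)$ and the consistency of the extensions constructed in Corollary \ref{thm:ext_L1} and Proposition \ref{thm:ext_Lp}.
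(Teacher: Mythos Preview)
Your proposal is correct and follows essentially the same approach as the paper: fix $\uu\in C^\infty_c(\R^d,\R^m)$, set $\f=(-\Delta+V)\uu$, invoke Theorem~\ref{thm:uniq_L1} to get $\uu=\dot{\mathcal H}_{-1}\f$, and then apply the $L^p$-boundedness of $V\dot{\mathcal H}_{-1}$ and $\Delta\dot{\mathcal H}_{-1}$ from Proposition~\ref{thm:ext_Lp}. Your additional verification that $\f\in L^p$ and your remark on the consistency of the $L^1$- and $L^p$-extensions are legitimate points of care that the paper leaves implicit, but they do not constitute a different route.
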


\begin{proof}
Fix $\uu\in C_c^{\infty}(\R^d,\R^m)$ and set $ \f\coloneqq(-\Delta + V)\uu.$ By Theorem \ref{thm:uniq_L1} we know that $ \uu = \dot{\mathcal{H}}_{-1}\f$ and, by Proposition \ref{thm:ext_Lp}, we can  extend $ V\dot{\mathcal{H}}_{-1}$ and $ \Delta\dot{\mathcal{H}}_{-1}$ to linear bounded operators in $L^p(\R^d,\R^m)$ for $ 1<p<r=q+\varepsilon$ for some $\varepsilon>0$. It thus follows that
\begin{align*}
\|\Delta \uu\|_p + \norm{V\uu}_p = \|\Delta\dot{\mathcal H}_{-1} \f\|_p + \|V\dot{\mathcal H}_{-1}\f\|_p
\le (2K+1)\norm\f_p = C_p\norm{(-\Delta + V)\uu}_p, 
\end{align*}
where $K$ is the constant appearing in the proof of Proposition \ref{thm:ext_Lp}.
\end{proof}

\section{Generation results}
\label{sec:Gen}
Throughout this section, we assume Hypotheses \ref{hyp-1} and \ref{hyp-2}.

\noindent Let us introduce the set $X=L^1(\R^d,\R^m)+L^\infty(\R^d,\R^m)$, endowed with the natural norm
\begin{align*}
\|\f\|_X\!=\!\inf\{\|\f_1\|_{L^1(\R^d,\R^m)}+\|\f_2\|_{L^\infty(\R^d,\R^m)}: \f_1\in L^1(\R^d,\R^m), \ \f_2\in L^\infty(\R^d,\R^m), \ \f=\f_1+\f_2 \},  
\end{align*}
and the realization in $X$ of the operator $-\Delta+V$, with
Kato maximal domain, i.e., the operator $T$, defined as follows:
\begin{align*}
 D(T) &=\{\f\in X:V\f\in L^1_{\rm loc}(\R^d,\R^m),\ (\Delta-V)\f\in X\}, \\
 T\f  &=(-\Delta+V)\f, \qquad\;\, \f\in D(T).
\end{align*}
Moreover, for every $p\in[1,\infty]$, we introduce also the part of $T$ in $L^p(\R^d,\R^m)$, i.e., the operator $T_p$ defined by 
\begin{align*}
 D(T_p) &=\{\f\in L^p(\R^d,\R^m):V\f\in L^1_{\rm loc}(\R^d,\R^m),\ (\Delta-V)\f\in L^p(\R^d,\R^m)\}, \nonumber \\
 T_p\f &=(-\Delta+V)\f, \qquad \f\in D(T_p).    
\end{align*}
We notice that $L^p(\R^d,\R^m)\subset X$ and, if $\f\in L^p(\R^d,\R^m)$ then $\|\f\|\in L^1(\R^d)+L^\infty(\R^d)$. Further, for every measurable set $\Omega\subset\R^d$, with finite measure, and every $\f\in X$, it holds that
$\f\in L^1(\Omega,\R^m)$ and there exists a positive constant $C=C(\Omega)$ such that
$\|\f\|_{L^1(\Omega,\R^m)}\le C\|\f\|_X$. 

The following result is the vector-valued counterpart of a well celebrated theorem by Kato (see \cite[Theorem 3]{kato86}).
\begin{theo}
\label{the:generation}
The operator $-T_p$ is $m$-dissipative for every $p\in[1,\infty]$.    
\end{theo}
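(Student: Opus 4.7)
The strategy is to verify the two conditions for $m$-dissipativity of $-T_p$: that $-T_p$ is dissipative, and that $\lambda I + T_p$ maps onto $L^p(\R^d, \R^m)$ for some $\lambda > 0$.

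For dissipativity, fix $\uu \in D(T_p)$ and $\lambda > 0$, and set $\f \coloneqq \lambda\uu + T_p\uu = \lambda\uu - \Delta\uu + V\uu$. Since $\uu \in L^p \subset L^1_{\rm loc}$ and $\Delta\uu = V\uu - \f + \lambda\uu \in L^1_{\rm loc}$, Lemma \ref{lem:kato_vett} applies to $\uu$. Scalarly multiplying the identity $-\Delta\uu = \f - \lambda\uu - V\uu$ by $\uu/\|\uu\|$ on $\{\uu \neq \bm 0\}$, invoking Lemma \ref{lem:kato_vett}, and dropping the nonnegative term $\|\uu\|^{-1}\langle V\uu,\uu\rangle\chi_{\{\uu\ne\bm 0\}}$ yields the distributional inequality
\[
\lambda\|\uu\| - \Delta\|\uu\| \leq \|\f\|.
\]
For $p \in [1, \infty)$, testing against $\|\uu\|^{p-1}\varphi_n$ (with $\|\uu\|$ regularized as $(\|\uu\|^2 + \delta)^{1/2}$, $\delta \downarrow 0$, and $\varphi_n$ a smooth cutoff with $\Delta\varphi_n \to 0$ uniformly), integrating by parts, and applying Hölder's inequality yields $\lambda\|\uu\|_p \leq \|\f\|_p$. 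For $p = \infty$, one compares $\|\uu\|$ pointwise with $\lambda^{-1}(\lambda - \Delta)^{-1}\|\f\|$ via the maximum principle and uses the $L^\infty$-contractivity of $(\lambda - \Delta)^{-1}$.

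For surjectivity, I would use the approximation $V_{\varepsilon, M}$ from Lemma \ref{lem:rel_VepsM_Veps}, which is bounded and coercive. Consequently $\mathcal H_{\varepsilon, M} = -\Delta + V_{\varepsilon, M}$ is $m$-dissipative on $L^p(\R^d, \R^m)$ for every $p \in [1, \infty]$ (by bounded perturbation of the Laplacian), so that $(\mathcal H_{\varepsilon, M} + \lambda)^{-1}$ is a contraction on $L^p$. For $\f \in L^p$, set $\uu_{\varepsilon, M} \coloneqq (\mathcal H_{\varepsilon, M} + \lambda)^{-1}\f$, so that $\|\uu_{\varepsilon, M}\|_p \leq \lambda^{-1}\|\f\|_p$ uniformly. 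Passing to the limit $M \to \infty$ (invoking Proposition \ref{prop:conv_u_eps_M} for the $L^2$ part and using weak/weak-$*$ compactness from the uniform $L^p$-bound for the $L^p$ part), and then $\varepsilon \to 0$ (mirroring Lemma \ref{lem:3.1}), produces $\uu \in L^p$ satisfying $-\Delta\uu + V\uu + \lambda\uu = \f$ in the sense of distributions; since $\uu \in L^p$ and $-\Delta\uu + V\uu = \f - \lambda\uu \in L^p$, it follows that $\uu \in D(T_p)$ and $(\lambda I + T_p)\uu = \f$.

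The principal obstacle is securing the convergence of $V_{\varepsilon, M}\uu_{\varepsilon, M}$ in $L^1_{\rm loc}$ to a function identifiable with $V\uu$, so that the limit $\uu$ indeed lies in $D(T_p)$. Here Hypothesis \ref{hyp-2} (comparability of the eigenvalues of $V$) is crucial: combined with the $L^1$-estimates of Theorem \ref{prop:stime_L1} and its corollaries, it yields uniform local bounds on $V\uu_{\varepsilon, M}$, and $V\uu \in L^1_{\rm loc}$ follows by Fatou's lemma once almost everywhere convergence is extracted. A further delicate point concerns $p = \infty$, since $L^1 \cap L^\infty$ is not dense in $L^\infty$; one typically handles it by duality, realizing $(\lambda I + T_\infty)^{-1}$ as the $L^\infty$-adjoint of the $L^1$-contraction $(\lambda I + T_1)^{-1}$ already established, which is well defined thanks to the symmetry of $V$.
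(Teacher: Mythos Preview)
There is a genuine gap in your surjectivity argument: the matrix $V_{\varepsilon,M}$ from Lemma~\ref{lem:rel_VepsM_Veps} is \emph{not} bounded. Only its off-diagonal entries are truncated to $v_{ij}\vee(-M)$; the diagonal entries remain $v_{ii}+\varepsilon\in L^1_{\rm loc}(\R^d)$, unbounded in general. Consequently your claim that $\mathcal{H}_{\varepsilon,M}$ is $m$-dissipative on every $L^p$ ``by bounded perturbation of the Laplacian'' fails, and with it the uniform $L^p$-resolvent bound on $\uu_{\varepsilon,M}$ that drives your limiting argument. The genuinely bounded approximation in the paper is the three-parameter family $V_{\varepsilon,M,N}$ of Lemma~\ref{lemma-V-eps-M-N}; switching to it would require an additional limit $N\to\infty$, and the identification of $V\uu$ in $L^1_{\rm loc}$ after all three limits---which you yourself flag as the principal obstacle---remains unaddressed. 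Note also that Proposition~\ref{prop:conv_u_eps_M}, which you invoke for the $M\to\infty$ step, is stated only for $\f\in L^2_c(\R^d,\R^m)$, not for general $\f\in L^p$.

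Your dissipativity argument is close in spirit to the paper's (both start from Kato's inequality to reach $(\mu-\Delta)\|\uu\|\le\|\f\|$), though the paper finishes more cleanly: it inverts $\mu-\Delta$ as a positivity-preserving isomorphism on $\mathcal S(\R^d)$, yielding $\|\uu\|\le(\mu-\Delta)^{-1}\|\f\|$ pointwise and hence in $L^p$-norm, which avoids the regularity issues in testing a distributional inequality against $\|\uu\|^{p-1}\varphi_n$. For surjectivity the paper takes an altogether different route: it works in the ambient space $X=L^1+L^\infty$, proves directly that $T$ and $T_p$ are closed (using Hypothesis~\ref{hyp-2} to bound $\|V(\uu_n-\uu_m)\|$ locally by $\lambda_V\|\uu_n-\uu_m\|$), obtains surjectivity of $\mu+T_1$ by a closed-range/duality argument (any $\bm{v}\in L^\infty$ annihilating the range satisfies $(\mu+T)\bm{v}=\bm 0$, hence vanishes), and handles $p=\infty$ by monotone approximation of $\f\ge\bm 0$ through $\f_n\in L^\infty_c\subset L^2$ together with the $L^2$-resolvent $(\mu+\mathcal H)^{-1}$ and the componentwise monotonicity of Proposition~\ref{prop:conv_u_eps_M}. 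General $p\in(1,\infty)$ then follows from the inclusion $L^p\subset X$ and the dissipativity estimate already proved.
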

\begin{proof}
We split the proof into different steps. Throughout the proof, $\mu$ is a positive constant, arbitrarily fixed.

{\em Step 1}. Here, we prove that the operator $\mu+T$ is injective, and, if $\uu\in D(T)$ and $(\mu+T_p)\uu\in L^p(\R^d,\R^m)$, then $\uu$ belongs to $L^p(\R^d,\R^m)$ and
\begin{equation}\label{eq:stima:Tp:iniettivo}
    \mu\|\uu\|_p\leq \|(\mu+T_p)\uu\|_p.    
\end{equation}

To begin with, we observe that, from Lemma \ref{lem:kato_vett} and recalling that $\lambda_V|\xi|^2\le\langle V\xi,\xi\rangle$ for every $\xi\in\R^m$, we can easily deduce that
\begin{equation}
(\mu-\Delta+\lambda_V)\|\uu\|\leq \chi_{\{\uu\neq \bm{0}\}}\|\uu\|^{-1}\langle \f,\uu\rangle\leq \|\f\|,
\label{NSWE}
\end{equation}
in the sense of distributions, for every $\uu\in D(T)$, where $\f=(\mu+T)\uu$.

Since $\lambda_V\geq0$ almost everywhere in $\R^d$, due to Hypothesis \ref{hyp-1}, from \eqref{NSWE} it follows that
$(\mu-\Delta)\|\uu\|
\leq \|\f\|$ in the sense of distributions, i.e.,
\begin{align}
\int_{\R^d}
\|\uu\| (\mu\varphi-\Delta\varphi) dx\le\int_{\R^d}\|\f\|\varphi dx
\label{vasco}
\end{align}
for every nonnegative function $\varphi\in C^{\infty}_c(\R^d)$. Since $\|\uu\|$ and $\|\f\|$ belongs to $L^1(\R^d)+L^{\infty}(\R^d)$, by a standard truncation argument we can extend the previous formula to every nonnegative $\varphi\in {\mathcal S}(\R^d)$. Then, recalling that the operator $\mu-\Delta$ is an isomorphism in ${\mathcal S}$ which preserves the positivity, we can recast
\eqref{vasco} into the form
\begin{align}
\int_{\R^d}
\|\uu\|\varphi dx\le\int_{\R^d}\|\f\|(\mu-\Delta)^{-1}\varphi dx,
\label{vasco-1}
\end{align}
for every nonnegative $\varphi\in {\mathcal S}(\R^d)$.
From \eqref{vasco-1}, we conclude that the tempered distributions $\|\uu\|$ and $(\mu-\Delta)^{-1}\|\f\|$ satisfy the inequality
\begin{align}
\int_{\R^d}
\|\uu\|\varphi dx\le\int_{\R^d}(\mu-\Delta)^{-1}\|\f\|\varphi dx,
\label{vasco-2}
\end{align}
for every nonnegative $\varphi\in {\mathcal S}(\R^d)$.
In particular, if $\f=\bm{0}$, then $\|\uu\|=0$ almost everywhere in $\R^d$, so that $\mu+T$ is one to one.

Finally, we observe that, if $\f\in L^p(\R^d,\R^m)$ for $1\le p\le\infty$, then
the tempered distribution $(\mu-\Delta)^{-1}\|\f\|$ is actually an element of $L^p(\R^d)$. Moreover, 
$\|(\mu-\Delta)^{-1}\|_{{\mathcal L}(L^p(\R^d))}\le\mu^{-1}$, so that, from \eqref{vasco-2} it follows that 
$\|\uu\|\le (\mu-\Delta)^{-1}\|\f\|$ almost everywhere in $\R^d$ and, consequently, taking the $L^p$-norms,
$\|\uu\|_p\le\mu^{-1}\|\f\|_p$.

{\em Step 2}. Here, we prove that $T$ and $T_p$ are closed operators, and $\mu+T_p$ has closed range.

Let $(\uu_n)_{n\in\N}\subset D(T)$ be such that $\uu_n$
and $\f_n\coloneqq T\uu_n$ converge, respectively, to $\uu$ and $\f$ in $X$. From \eqref{NSWE} with $\mu=0$ we deduce that
\begin{align*}
\int_{\R^d}\lambda_V\|\uu_n-\uu_m\|\varphi dx\leq \int_{\R^d}\|\f_n-\f_m\|\varphi dx+\int_{\R^d}\|\uu_n-\uu_m\|\Delta\varphi dx,\qquad\;\,m,n\in\N,     
\end{align*}
for every nonnegative function $\varphi\in C^{\infty}_c(\R^d)$. Recalling that $\|V\xi\|\leq C\lambda_V\|\xi\|$ almost everywhere in $\R^d$ for every $\xi\in \R^m$, we infer that
\begin{align*}
0\leq & C^{-1}\int_{\R^d}\|V (  \uu_n-\uu_m)\|\varphi dx \\
\leq & \int_{\R^d}\lambda_V   \|\uu_n-\uu_m\|\varphi dx\leq \int_{\R^d}\|\f_n-\f_m\|\varphi dx+\int_{\R^d}\|\uu_n-\uu_m\|\Delta \varphi dx.  
\end{align*}
This implies that $(V\uu_n\varphi)_{n\in\N}$ is a Cauchy sequence in $L^1(\R^d,\R^m)$. Moreover, since
$X$ is continuously embedded in $L^1(\Omega,\R^m)$ for
every bounded measurable set $\Omega$, up to a subsequence, we can assume that $(\uu_n)_{n\in\N}$ pointwise converges to $\uu$ almost everywhere in $\R^d$. As a consequence, $(V\uu_n)_{n\in\N}$ pointwise converges to $V\uu$, so that $(V\uu_n\varphi)_{n\in\N}$ converges to $V\uu\varphi$ in $L^1(\R^d,\R^m)$. The arbitrariness of $\varphi\in C^\infty_c(\R^d)$ implies that $V\uu\in L^1_{\rm loc}(\R^d,\R^m)$ and so $(V\uu_n)_{n\in\N}$ converges to $V\uu$ in $L^1_{\rm loc}(\R^d,\R^m)$.
Hence, $(-\Delta (\uu_n)_j=(\f_n)_j - (V\uu_n)_j)_{n\in\N}$ converges to $f_j - (V\uu)_j$ in $\calD'$ for every $j=1,\ldots,m$, which means that $-\Delta\uu + V\uu=\f\in X$, i.e., $\uu\in D(T)$ and $T\uu=\f$.

The same arguments can be applied to show that also the operator $T_p$ is closed in $L^p(\R^d,\R^m)$ for every $p\in [1,\infty]$.

Finally, the closedness of the range of $\mu+T_p$ is a consequence of inequality \eqref{eq:stima:Tp:iniettivo} in Step 1 and the closedness of the operator $T_p$.

{\em Step 3}. Here, we prove that the operators $\mu+T$ and $\mu+T_p$ are invertible and $\|(\mu+T_p)^{-1}\uu\|_p\leq \mu^{-1}\|\uu\|_p$ for every $\uu\in L^p(\R^d,\R^m)$. 

The injectivity of the operators $\mu+T$ and $\mu+T_p$ follow from Step 1. Hence, we just need to prove that the operators $\mu+T$ and $\mu+T_p$ are surjective. The main steps to prove this property are the cases $p=1$ and $p=\infty$. Once the surjectivity of the operators $\mu+T_1$ and $\mu+T_{\infty}$ is proved, the other cases will follow almost straightforwardly.

From Step 2, we know that $\mu+T_1$ has closed range. Hence, it remains to prove that $\mu+T_1$ has dense range. Assume that 
\begin{equation*}
\int_{\R^d}\langle \bm{v},(\mu+T_1)\uu\rangle dx=0, \qquad \uu\in D(T_1),
\end{equation*}
for some $\bm{v}\in L^\infty(\R^d,\R^m)$. Since $C^{\infty}_c(\R^d,\R^m)$ is contained in $D(T_1)$ and
$V\bm{v}\in L^1_{\rm loc}(\R^d,\R^m)$,
from the previous formula, we infer that $(\mu-\Delta+V)\bm{v}=\bm{0}$ in the sense of distributions. This implies that $\bm{v}\in D(T)$ and $(\mu+T)\bm{v}=\bm{0}$. The injectivity of $\mu+T$ yields $\bm{v}=\bm{0}$. We have so proved that the operator $T_1$ is invertible.

We now address the case $p=\infty$. For this purpose, we fix $\f\in L^\infty(\R^d,\R^m)$, with $\f\geq \bm{0}$ componentwise, and choose a  sequence $(\f_n)_{n\in\N}\subset L^\infty_c(\R^d,\R^m)$ such that $\f_n\geq \bm{0}$ and $\f_n$ pointwise increases to $\f$ almost everywhere in $\R^d$. Clearly, each function $\f_n$ belongs to $L^2(\R^d,\R^m)$. 
Since the operator ${\mathcal H}:D({\mathcal H})\to L^2(\R^d,\R^m)$ is accretive, the operator $(\mu+\mathcal{H})^{-1}$ is well-defined and bounded in $L^2(\R^d,\R^m)$. For every $n\in\N$, let us set
$\uu_n=(\mu+{\mathcal H})^{-1}\f_n$. Note that each function $\uu_n$ belongs to $X$. Moreover, $V\uu_n\in L^1_{\rm loc}(\R^d,\R^m)$ as a consequence of \eqref{stima-L1-V-eps-u-eps}, which shows that $(\mu+\lambda_V)\|\uu_n\|$ belongs to $L^1(\R^d)$.
Finally, by difference, $-\Delta\uu_n+V\uu_n=\f_n-\mu\uu_n$ belongs to $X$. We have so proved that $\uu_n\in D(T)$. Finally, observing that $(\mu+T)\uu_n=\f_n$ in the sense of distributions and $\f_n\in L^{\infty}_c(\R^d,\R^m)$, for every $n\in\N$, 
from Step 1, we conclude that $\uu_n\in L^{\infty}(\R^d,\R^m)$ and $\|\uu_n\|_\infty\leq \mu^{-1}\|\f_n\|_\infty\leq \mu^{-1}\|\f\|_\infty$ for every $n\in\N$. It thus follows that 
$\uu_n\in D(T_\infty)$ and $(\mu+T_\infty)\uu_n=\f_n$ for every $n\in\N$.
Since $\f_n\geq\bm{0}$ and increases componentwise, from Proposition \ref{prop:conv_u_eps_M} we infer that $\uu_n\geq\bm{0}$ and increases componentwise. Hence, there exists the pointwise limit $\uu\coloneqq\lim_{n\to\infty}\uu_n\in L^\infty(\R^d,\R^m)$. 

Let us show that $\uu$ belongs to $D(T_{\infty})$  and $(\mu+T_\infty)\uu=\f$. Since $\|\uu_n\|_\infty\leq\mu^{-1}\|\f\|_\infty$ for every $n\in\N$, it follows that $(V\uu_n)_{n\in\N}$ converges to $V\uu$ in $L^1_{\rm loc}(\R^d,\R^m)$ by dominated convergence. Moreover, $\|\uu\|_{\infty}\le \mu^{-1}\|\f\|_{\infty}$ and, by difference, 
\begin{align*}
\int_{\R^d}\langle\f,\bm{\varphi}\rangle dx=&
\lim_{n\to +\infty}\int_{\R^d}\langle\f_n,\bm{\varphi}\rangle dx
=\lim_{n\to +\infty}\int_{\R^d}\langle \uu_n,(\mu-\Delta+V)\bm{\varphi}\rangle dx\\
=&\mu\int_{\R^d}\langle\uu,\bm{\varphi}\rangle dx
-\int_{\R^d}\langle\uu,\Delta\bm{\varphi}\rangle dx
+\int_{\R^d}\langle V\uu,\bm{\varphi}\rangle dx, \qquad \bm{\varphi}\in C^\infty_c(\R^d,\R^m).
\end{align*}
This shows that the distributional Laplacian of $\uu$ coincides with the function $\mu\uu+V\uu-\f$. In particular, $-\Delta\uu+V\uu\in L^{\infty}(\R^d,\R^m)$, so that $\uu\in D(T_{\infty})$ and $(\mu+T_{\infty})\uu=\f$.
We have thus proved that $T_{\infty}$ is invertible.

Since $\mu+T_1$ and $\mu+T_{\infty}$ are invertible, we can now easily show that also the operators $T$ and $T_p$ ($p\in (1,\infty)$) are invertible. We begin by considering the operator $T$. Fix $\f\in X$ and let $\f_1\in L^1(\R^d,\R^m)$ and $\f_{\infty}\in L^\infty(\R^d,\R^m)$ be such that $\f=\f_1+\f_{\infty}$. From the above results, there exist $\uu_1\in D(T_1)$ and $\uu_{\infty}\in D(T_\infty)$ such that $(\mu+T_1)\uu_1=\f_1$ and $(\mu+T_\infty)\uu_{\infty}=\f_{\infty}$. Clearly, the function $\uu=\uu_1+\uu_{\infty}$ belongs to $X$ and, recalling that $T_1$ and $T_{\infty}$ are the parts of $T$ in $L^1(\R^d,\R^m)$ and in $L^{\infty}(\R^d,\R^m)$, respectively, we conclude that $(\mu+T)\uu=(\mu+T)(\uu_1+\uu_{\infty})=(\mu+T_1)\uu_1+(\mu+T_{\infty})\uu_{\infty}=\f_1+\f_{\infty}=\f$.

Finally, we show that $\mu+T_p$ is invertible for every $p\in(1,\infty)$. Fix $\f\in L^p(\R^d,\R^m)\subset X$ and let $\uu\in D(T)$ be such that $(\mu+T)\uu=\f\in L^p(\R^d,\R^m)$. By Step 1, we deduce that $\uu\in L^p(\R^d,\R^m)$. Hence, $\uu$ belongs to $D(T_p)$ and $(\mu+T_p)\uu=\f$. The estimate $\|(\mu+T_p)^{-1}\uu\|_p\leq \mu^{-1}\|\uu\|_p$ follows again from\eqref{eq:stima:Tp:iniettivo}. The proof is complete.
\end{proof}

\begin{theo}
Under Hypotheses $\ref{hyp-1}$, $\ref{hyp-2}$ and assuming that $\lambda_V$ belongs to $B_p$ for some $p\in [1,\infty)$, the realization $-A_p$ of the operator $\Delta -V$ in $L^p(\R^d,\R^m)$, with $D(A_p)=\{\uu\in L^p(\R^d,\R^m):
\Delta\uu, V\uu\in L^p(\R^d,\R^m)\}$, generates a strongly continuous semigroup of contraction, which is, for $p>1$, also analytic.
Moreover, $D(A_p)$ coincides with the Kato maximal domain, i.e.
\begin{align*}
D(A_p)=\{\uu\in L^p(\R^d,\R^m): \Delta\uu-V\uu\in L^p(\R^d,\R^m), V\uu\in L^1_{\rm loc}(\R^d,\R^m)\}    
\end{align*}
and, if $p\ge 2$, then
\begin{align*}
D(A_p)=\{\uu\in L^p(\R^d,\R^m): \Delta\uu-V\uu\in L^p(\R^d,\R^m)\}.    
\end{align*}
\end{theo}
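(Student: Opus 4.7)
The plan is to identify the operator $A_p$ with the Kato realization $T_p$ of Theorem \ref{the:generation}. Once this equality is established, the generation of a $C_0$-semigroup of contractions and the description of $D(A_p)$ as the Kato maximal domain both follow at once; analyticity for $p > 1$ and the simplified domain characterization when $p \ge 2$ can then be addressed as independent concluding remarks.

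The inclusion $A_p \subseteq T_p$ is immediate from the definitions. For the opposite inclusion, the main tool is the $L^p$-maximal inequality: \eqref{coltellate} when $p = 1$, and Theorem \ref{thm:maximal_ineq} when $p > 1$, the latter being applicable because $\lambda_V \in B_p$ implies, via Gehring's self-improvement lemma (see \cite{gehring}), that $\lambda_V \in B_q$ for some $q > p$. This inequality shows that $(-\Delta + V)|_{C_c^\infty(\R^d,\R^m)}$ is closable in $L^p(\R^d,\R^m)$; let $B_p$ denote its closure. For $\uu \in D(B_p)$ approximated by $\uu_n \in C_c^\infty(\R^d,\R^m)$ in the graph norm, the maximal inequality applied to differences $\uu_n - \uu_m$ shows that $(\Delta\uu_n)_n$ and $(V\uu_n)_n$ are Cauchy in $L^p$; their limits are identified with $\Delta\uu$ and $V\uu$ respectively, via distributional convergence and almost everywhere convergence along a subsequence. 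Hence $D(B_p) \subseteq D(A_p)$ with $B_p \uu = -\Delta\uu + V\uu$.

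The central step is then to prove $B_p = T_p$, which together with $B_p \subseteq A_p \subseteq T_p$ forces $A_p = T_p$. Since $B_p$ is closed, $B_p \subseteq T_p$, and $\mu + T_p$ is bijective from $D(T_p)$ onto $L^p(\R^d,\R^m)$ for every $\mu > 0$, it suffices to show that $\mu + B_p$ is surjective. Dissipativity, inherited from $T_p$ via Step~1 in the proof of Theorem \ref{the:generation}, yields that $\mu + B_p$ has closed range; by Hahn-Banach we are reduced to proving that any $\bm{v} \in L^{p'}(\R^d,\R^m)$ with $\int_{\R^d}\langle \bm{v}, (\mu - \Delta + V)\uu\rangle\,dx = 0$ for all $\uu \in C_c^\infty(\R^d,\R^m)$ must vanish. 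This gives the distributional identity $\mu \bm{v} - \Delta \bm{v} + V \bm{v} = \bm{0}$, and the delicate point is to verify that $V \bm{v} \in L^1_{\rm loc}(\R^d,\R^m)$ as a genuine function, so that $\bm{v} \in D(T_{p'})$ and the injectivity of $\mu + T_{p'}$ forces $\bm{v} = \bm{0}$. For $p = 1$ this is immediate since $\bm{v} \in L^\infty$ and $V \in L^1_{\rm loc}$; for $p > 1$, Gehring gives $\lambda_V \in L^{p+\eta}_{\rm loc}$ for some $\eta > 0$, a direct computation shows $p' \ge (p+\eta)/(p+\eta-1)$, and since $\|V\| \le C \lambda_V$ the inclusion $V \bm{v} \in L^1_{\rm loc}$ follows from H\"older's inequality. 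This last case, particularly for $p \in (1, 2)$, is the principal technical obstacle.

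Analyticity on $L^p$ for $p > 1$ follows from the self-adjointness and nonnegativity of $\mathcal{H}$ on $L^2(\R^d,\R^m)$, which gives an analytic semigroup there, combined with the domination $\|e^{-t\mathcal{H}}\uu\| \le e^{t\Delta}\|\uu\|$ from \eqref{eq:domination:result} and the consistency of the resolvents across $L^p$-spaces: a Stein-type interpolation then propagates analyticity from $L^2$ to every $L^p$ with $p \in (1, \infty)$. The simplified form of $D(A_p)$ for $p \ge 2$ is a direct consequence of the observations above: since $p' \le 2 \le p$ we have $\|V\| \le C \lambda_V \in L^p_{\rm loc} \subseteq L^{p'}_{\rm loc}$, so every $\uu \in L^p(\R^d,\R^m)$ automatically satisfies $V\uu \in L^1_{\rm loc}$ by H\"older, making the extra requirement in the Kato domain redundant.
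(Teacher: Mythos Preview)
Your proposal is correct and, for the domain identification $A_p = T_p$, follows essentially the same route as the paper: both arguments show that $C_c^\infty(\R^d,\R^m)$ is a core for $T_p$ via the duality argument (an annihilator $\bm v \in L^{p'}$ satisfies $(\mu-\Delta+V)\bm v = \bm 0$ distributionally, belongs to $D(T_{p'})$, hence vanishes), and then use the maximal inequality on approximating sequences to pass from $D(T_p)$ to $D(A_p)$. Two minor remarks: your introduction of the auxiliary closure $B_p$ is a harmless repackaging of the paper's core argument; and your invocation of Gehring to get $\lambda_V \in L^{p+\eta}_{\rm loc}$ is unnecessary, since $\lambda_V \in B_p$ already gives $\lambda_V \in L^p_{\rm loc}$, and with $\bm v \in L^{p'}$ H\"older with the conjugate pair $(p,p')$ immediately yields $\lambda_V\|\bm v\| \in L^1_{\rm loc}$, as the paper does in one line.

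The genuine difference is in the analyticity step. The paper proves regular dissipativity directly: it invokes \cite[Theorem~3.9]{Ouhabaz} to obtain the estimate $\big|\mathrm{Im}\int (\Delta\uu,\uu)\|\uu\|^{p-2}\big| \le C_p^{-1}\big(-\mathrm{Re}\int (\Delta\uu,\uu)\|\uu\|^{p-2}\big)$ for the Laplacian, notes that the potential term $(V\uu,\uu)$ is real and nonnegative, and concludes via Goldstein's criterion. Your route instead uses that $\mathcal H$ is self-adjoint and nonnegative on $L^2$ (hence generates an analytic semigroup of angle $\pi/2$ there), the domination \eqref{eq:domination:result} to extrapolate contractivity to all $L^p$, and Stein-type interpolation to propagate analyticity. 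Both are standard; the paper's argument is entirely self-contained and gives an explicit sector, while yours is more structural but requires one additional check you only mention in passing: that the semigroups generated by $-T_p$ on the various $L^p$ spaces are consistent with $e^{-t\mathcal H}$ on $L^2$. This does follow from the paper's framework (the resolvents $(\mu+T_p)^{-1}$ are all restrictions of $(\mu+T)^{-1}$ on $X$, and $(\mu+\mathcal H)^{-1}$ agrees with $(\mu+T_2)^{-1}$ on $L^2$ by the construction in Step~3 of Theorem~\ref{the:generation}), but it should be stated.
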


\begin{proof}
We fix $p\in [1,\infty)$ and split the proof into three steps. 
In the first one we prove that $C^{\infty}_c(\R^d,\R^m)$ is a core for the operator $T_p$ and, in the second one, we prove that $D(T_p)=D(A_p)$. Finally, in Step 3, we prove that $A_p$ is a sectorial operator.

{\em Step 1}. To prove that $C^{\infty}_c(\R^d,\R^m)$ is core for the operator $T_p$, we just need to show that 
$C^{\infty}_c(\R^d,\R^m)$ is contained in $D(T_p)$ and 
$(1+T_p)(C^\infty_c(\R^d,\R^m))$ is dense in $L^p(\R^d,\R^m)$. The first property is immediate to prove.
To prove the second property, we fix  $\bm{v}\in L^{p'}(\R^d,\R^m)$ such that 
\begin{align*}
0=\int_{\R^d}\langle (1+T_p)\uu,\bm{v}\rangle dx=
\sum_{j=1}^m\int_{\R^d}(u_j-\Delta u_j+(Vu)_j)v_jdx
, \qquad\;\, \forall \uu\in C^\infty_c(\R^d,\R^m)
\end{align*}
or, equivalently,
\begin{align*}
\sum_{j=1}^m\int_{\R^d}\Delta u_j v_jdx=
\sum_{j=1}^m\int_{\R^d}u_j(Vv)_jdx+
\sum_{j=1}^m\int_{\R^d}u_jv_jdx, \quad \forall \uu\in C^\infty_c(\R^d,\R^m).
\end{align*}
Taking $\uu=\varphi \bm{e}_k$ $(k=1,\ldots,m)$, where $\bm{e}_k$ is the $k$-th element of the canonical basis of $\R^m$, $\varphi\in C^{\infty}_c(\R^d)$, and recalling that $\lambda_V\in B_p$, we deduce that the distributional Laplacian of $v_k$ belongs to $L^1_{\rm loc}(\R^d)$ (since $(Vv)_k\in L^1_{\rm loc}(\R^d)$) and coincides with the function
$(Vv)_k+v_k$. This shows that $\Delta\bm{v}-V\bm{v}=\bm{v}\in L^{p'}(\R^d,\R^m)$, so that
$\bm{v}\in D(T_{p'})$ and $(1+T_{p'})\bm{v}=\bm{0}$.
The injectivity of $(1+T_{p'})$ implies that $\bm{v}=\bm{0}$. Thus, we have proved that $(1+T_p)(C^{\infty}_c(\R^d,\R^m))$ is dense in $L^p(\R^d,\R^m)$.

{\em Step 2}. By Theorem \ref{the:generation}, the operator $T_p$ generates a strongly continuous semigroup of contractions in $L^p(\R^d,\R^m)$. Clearly, 
$D(A_p)\subset D(T_p)$. To prove that actually the two sets coincide, we fix $\uu\in D(T_p)$ and a sequence $(\uu_n)_{n\in\N}\subset C^{\infty}_c(\R^d,\R^m)$ such that
$\uu_n$ and $-\Delta\uu_n+V\uu_n$ converge, respectively, to
$\uu$ and $T_p\uu$ as $n$ tends to $\infty$.
Estimate \eqref{coltellate}, if $p=1$, and estimate
\eqref{eq:maximal_ineq:Lp}, if $p\in (1,\infty)$, show that $(\Delta\uu_n)_{n\in\N}$ and $(V\uu_n)_{n\in\N}$ are Cauchy sequences in $L^p(\R^d,\R^m)$, so that they converge, respectively, to some functions $\bm{w}_1$ and $\bm{w}_2$. Since $\uu_n$ converges to $\uu$ in $L^p(\R^d,\R^m)$, it is immediate to infer that $\bm{w}_2=V\uu$, so that $V\uu$ belongs to $L^p(\R^d,\R^m)$. By difference also the Laplacian of $\uu$ belongs to $L^p(\R^d,\R^m)$. We have so proved that $\uu\in D(A_p)$. Hence, $D(A_p)$ is the Kato maximal domain
of the realization of $-\Delta+V$ in $L^p(\R^d,\R^m)$.
Finally, we observe that, if $p\ge 2$, then $L^p(\R^d,\R^m)$ is contained in $L^{p'}_{\rm loc}(\R^d,\R^m)$, so that H\"older's inequality implies that
$V\uu\in L^1_{\rm loc}(\R^d,\R^m)$ for any $\uu \in L^p(\R^d,\R^m)$, and $D(A_p)$ actually coincides with the maximal domain of the realization of $-\Delta+V$ in $L^p(\R^d,\R^m)$.

{\em Step 3}. 
To prove the sectoriality of $A_p$, 
in view of \cite[Chapter I, Section 5.8]{goldstein} and taking into account that $C^{\infty}_c(\R^d,\C^m)$ is a core of $A_p$, it suffices to show that there exists a positive constant $C_p$ such that 
\begin{equation}
\left|{\rm Im} \int_{\R^d}(A_p\uu,\uu)\norm{\uu}^{p-2} dx\right| \leq  -C_p{\rm Re} \int_{\R^d}(A_p\uu,\uu)\norm{\uu}^{p-2} dx,\qquad\;\,
\bm{u}\in C_c^\infty(\R^d, \C^m).
\label{regular-diss}
\end{equation}
Here with $(\cdot,\cdot)$ we indicate the Hermitian scalar product of $\C^m$. 
By \cite[Theorem 3.9]{Ouhabaz} it follows that 
\begin{equation}\label{Maati}
-{\rm Re}\int_{\R^d}(\Delta \uu,\uu)\norm{\uu}^{p-2}dx\ge c_p\left|{\rm Im}\int_{\R^d}(\Delta \uu,\uu)\norm{\uu}^{p-2}dx\right|,\qquad\;\, \uu\in C_c^\infty(\R^d,\C^m),
\end{equation}
where $c_p$ is $\frac{2\sqrt{p-1}}{|p-2|}$ if $p\neq 2$ and any positive constant if $p=2$.
Recalling that $(V\uu,\uu)\ge 0$, \eqref{regular-diss} follows from \eqref{Maati}, where $C_p=c_p^{-1}$. For more details, see the proof of \cite[Proposition 4.5]{MR18}.
\end{proof}

\section{Examples}
\label{sec:Exa}
We conclude this work by provide two matrix-valued potentials $V$ which satisfy the assumptions of Theorem \ref{thm:maximal_ineq}. In particular, we show that entries with singularities for $V$ are allowed. We have to prove that Hypotheses \ref{hyp-1} and \ref{hyp-2} are satisfied, and that the minimal eigenvalue of $V$ belongs to a $B_q$ class, for some $q\in(1,\infty)$.

\begin{exa}\label{exemple1}
Let $m=2$ and $1<q<\infty$. Then, for every $x\in\R^d\setminus\{0\}$ we consider the symmetric matrix $V=(v_{ij}(x))_{i,j=1}^2$ with entries
\begin{align*}
v_{11}(x) &=\norm{x}^{-\alpha}((\cos(\|x\|))^2 + c_1(\sin(\|x\|))^2+ \norm{x}^{\beta}(k(\cos(\|x\|))^2 + k_1(\sin(\|x\|))^2),\\
v_{12}(x) = v_{21}(x)&=|\sin(\|x\|)\cos(\|x\|)| \left[(1 - c_1)\norm{x}^{-\alpha}+(k-k_1)\norm{x}^{\beta}\right],\\ 
v_{22}(x) &=\norm{x}^{-\alpha}(c_1(\cos(\|x\|))^2+ (\sin(\|x\|))^2) + \norm{x}^{\beta}(k_1(\cos(\|x\|))^2 + k(\sin(\|x\|))^2),
\end{align*}
for every $x\in\R^d\setminus\{0\}$, where $\beta$, $c_1$, $k$ and $k_1$ are positive constants, with $c_1>1$ and $k_1>k$, and $\alpha\in \left (0,\frac{d}{q}\right )$.

Clearly, the off-diagonal terms are non-positive.
Moreover, since 
\begin{align*}
\operatorname{det}(\lambda I-V(x)) =& (\lambda-v_{11})(\lambda-v_{22}(x)) - (v_{12}(x))^2\\
=& \lambda^2 - \lambda \left[\norm{x}^{-\alpha}(1+c_1)+\norm{x}^{\beta}(k_1+k)\right]\\
&+(c_1\norm{x}^{-2\alpha}+k\,k_1\norm{x}^{2\beta})\left((\cos(\|x\|))^4+(\sin(\|x\|))^4+2(\sin(\|x\|))^2(\cos(\|x\|))^2\right)\\
&+\norm{x}^{\beta-\alpha}\big (k_1(\cos(\|x\|))^4 + c_1k(\sin(\|x\|))^4 + c_1k(\cos(\|x\|))^4+ k_1(\sin(\|x\|))^4\big )\\
&+\norm{x}^{\beta-\alpha}\left(2k_1(\sin(\|x\|))^2(\cos(\|x\|))^2 + 2c_1k(\sin(\|x\|))^2(\cos(\|x\|))^2\right)\\
=& \lambda^2 - \lambda \left[\norm{x}^{-\alpha}(1+c_1)+\norm{x}^{\beta}(k_1+k)\right]\\ 
&+c_1\norm{x}^{-2\alpha} + kk_1 \norm{x}^{2\beta} + (k_1+kc_1)\norm{x}^{\beta-\alpha}\\
=&\left(\lambda-\|x\|^{-\alpha}-k\|x\|^{\beta}\right)\left(\lambda-c_1\|x\|^{-\alpha}-k_1\|x\|^{\beta}\right),
\end{align*}
the eigenvalues of $V(x)$ are
\begin{align*}
\lambda_1(x)\coloneqq\|x\|^{-\alpha}+k\|x\|^{\beta},\quad\lambda_2(x)\coloneqq c_1\|x\|^{-\alpha}+k_1\|x\|^{\beta}.
\end{align*}
They are positive functions, and therefore $V$ is a positive defined matrix-valued function. Moreover,   
\begin{align*}
\lambda_1(x)\le\lambda_2(x)\le C\lambda_1(x),\qquad\;\,x\in\R^d\setminus\{0\}, 
\end{align*}
where $C\coloneqq\max\left\{c_1,k_1/k\right\}$.
Hence, Hypothesis \ref{hyp-1} and \eqref{hyp-2} are satisfied.
		
In addition, the condition $0<\alpha<\frac{d}{q}$ and  $\beta>0$ ensures that $\lambda_V=\lambda_1$ belongs to $B_q$ since the function $x\mapsto\|x\|^{-\gamma}$ belongs to  $B_q$ for $\gamma\in \left (-\infty,\frac{d}{q}\right )$ (see for instance \cite[Chapter 9]{grafakos-modern}).
\end{exa}

\begin{exa}\label{exemple2}
Let $\widetilde V:\R^d\to \R^{m^2}$ be the matrix-valued matrix,  whose entries are measurable functions and
there exist positive constants $c_i<C_i$, $C_{ij}$ and ${\eta}_{ij}<{\eta}$ with $C_{ij}=C_{ji}$ and ${\eta}_{ij}={\eta}_{ji}$ for every $i,j=1,\ldots,m$, such that
\begin{align*}
c_i(1+\|x\|^2)^{\eta}\leq \widetilde v_{ii}(x)\leq C_{i}(1+\|x\|^2)^{\eta}, \qquad -C_{ij}(1+\|x\|^2)^{{\eta}_{ij}}\leq \widetilde v_{ij}(x)=\widetilde v_{ji}(x)\leq 0    
\end{align*}
for every $x\in\R^d$ and every $i=1,\ldots,m$ and $j\in\{1,\ldots,m\}\setminus\{i\}$. Further, we  assume that
\begin{align*}
\sum_{i=1}^m\bigg(c_i\xi_i^2-\sum_{j=1, j\neq i}^m C_{ij}\xi_i\xi_j\bigg)\geq 0,\qquad\;\,\xi\in\R^m.   
\end{align*}
This implies that $\widetilde V(x)$ is positive semi-definite for every $x\in\R^d$. Indeed, for every $x\in\R^d$ and $\xi\in \R^m$ we get
\begin{align*}
\langle \widetilde V(x)\xi,\xi\rangle
\geq & \sum_{i=1}^m\bigg(c_i(1+\|x\|^2)^{\eta}\xi_i^2-\sum_{j=1,j\neq i}^m C_{ij}(1+\|x\|^2)^{{\eta}_{ij}}|\xi_i||\xi_j|\bigg) \\
= & (1+\|x\|^2)^{\eta}\sum_{i=1}^m\bigg(c_i\xi_i^2-\sum_{j=1,j\neq i}^m C_{ij}(1+\|x\|^2)^{{\eta}_{ij}-{\eta}}|\xi_i||\xi_j|\bigg) \\
\geq & (1+\|x\|^2)^{\eta}\sum_{i=1}^m\bigg(c_i\xi_i^2-\sum_{j=1,j\neq i}^m C_{ij}|\xi_i||\xi_j|\bigg)\geq 0.
\end{align*}
Moreover, if $x\in\R^d\setminus B(0,1)$, arguing as above, we deduce that
\begin{align}
\langle \widetilde V(x)\xi,\xi\rangle
\geq & (1+\|x\|^2)^{\eta}\sum_{i=1}^m\bigg(c_i\xi_i^2-2^{\max\{{\eta}_{ij}:i,j=1,\ldots,m\}-{\eta}}\sum_{j=1,j\neq i}^m C_{ij}|\xi_i||\xi_j|\bigg)\notag\\
\geq & (1-2^{\max\{{\eta}_{ij}:i,j=1,\ldots,m\}-{\eta}})(1+\|x\|^2)^{\eta}\sum_{i=1}^mc_i\xi_i^2\notag\\
\geq & \min\{c_i:i=1,\ldots,m\} (1-2^{\max\{{\eta}_{ij}:i,j=1,\ldots,m\}-{\eta}})(1+\|x\|^2)^{\eta}\|\xi\|^2.
\label{europa}
\end{align}

Finally,
\begin{align}
\langle\widetilde V(x)\xi,\xi\rangle\le 
(1+\|x\|^2)^{\eta}\sum_{i=1}^m\bigg(c_i\xi_i^2-\sum_{j=1,j\neq i}^m C_{ij}|\xi_i||\xi_j|\bigg)
\label{league}
\end{align}
for every $x\in\R^d$ and $\xi\in\R^m$.

From \eqref{europa} and \eqref{league} it follows that there exist positive constants $\widetilde D_m$ and $\widetilde D_M$ such that
\begin{align}
\label{ex_2_avl_v_tilde}
\widetilde D_m(1+\|x\|^2)^{\eta}\leq
\lambda_{\widetilde V}(x)\leq \Lambda_{\widetilde V}(x)\leq \widetilde D_M(1+\|x\|^2)^{\eta},\qquad\;\,
x\in\R^d\setminus B(0,1),
\end{align}
where $\lambda_{\widetilde V}(x)$ and $\Lambda_{\widetilde V}(x)$ denote, respectively, the minimum and the maximum eigenvalue of the matrix $\widetilde V(x)$.

Let us set $V(x)=\widetilde V(x)+\|x\|^{-\alpha}{\rm Id}$ for every $x\in\R^d\setminus\{0\}$ and some $\alpha\in \left(0,\frac dq\right)$.
Then, the matrix-valued function $V$, arbitrarily extended to the whole $\R^d$ in such a way that the matrix $V(0)$ is definite positive,
satisfies Hypothesis \ref{hyp-1}.

Moreover, since for every $x\in B(0,1)$ it holds that
\begin{align*}
\|x\|^{-\alpha}
\geq & \frac{1}{2}\|x\|^{-\alpha}+\frac{1}{2}
\geq \frac12\|x\|^{-\alpha}+2^{-1-\eta}(1+\|x\|^2)^\eta,
\end{align*}
it follows that there exist positive constants $D_m$ and $D_M$ such that 
\begin{align}
D_m\left(\|x\|^{-\alpha}+(1+\|x\|^2)^{\eta}\right)\leq
\lambda_V(x)\leq \Lambda_V(x)\leq D_M\left(\|x\|^{-\alpha}+(1+\|x\|^2)^{\eta}\right),    
\label{braga}
\end{align}
for every $x\in B(0,1)\setminus\{0\}$.
Using \eqref{ex_2_avl_v_tilde}, we can easily extend \eqref{braga} to every $x\in\R^d\setminus\{0\}$, up to replacing $D_m$ and $D_M$ with suitable new positive constants.

Hence, Hypothesis \ref{hyp-2} is fulfilled with $C=D_M/D_m$ and, since the function $x\mapsto\|x\|^{-\alpha}+(1+\|x\|^2)^{\eta}$ belongs to $B_q$, it follows that $\lambda_V\in B_q$.
\end{exa}

\end{document}